\def\NAT@def@citea{\def\@citea{\NAT@separator}}
\newcommand{\reff}[1]{(\ref{#1})}
\newcommand{\IW}{\mathbb{W}}
\newcommand{\IL}{\mathbb{L}}
\newcommand{\sH}{\mathcal{H}}
\newcommand{\sG}{\mathcal{G}}
\newcommand{\sP}{\mathcal{P}}
\newcommand{\sW}{\mathcal{W}}
\newcommand{\sM}{\mathcal{M}}
\newcommand{\IR}{\mathbb{R}}
\newcommand{\IRp}{\mathbb{R}_+}
\newcommand{\IC}{\mathbb{C}}
\newcommand{\IZ}{\mathbb{Z}}
\newcommand{\IN}{\mathbb{N}}
\newcommand{\IE}{\mathbb{E}}
\newcommand{\E}{\mathbb{E}}
\newcommand{\IP}{\mathbb{P}}
\newcommand{\Ii}{\mathbbm{1}}
\newcommand{\Var}{\mathbb V\mathrm{ar}}
\newcommand{\nset}[1]{\llbracket #1 \rrbracket}
\DeclareMathOperator*{\argmin}{\arg\min}
\newtheorem{thm}{Theorem}[section]
\newtheorem{lem}[thm]{Lemma}
\newtheorem{cor}[thm]{Corollary}
\newtheorem{prop}[thm]{Proposition}
\theoremstyle{definition}
\newtheorem{rem}[thm]{Remark}
\newtheorem{exs}[thm]{Examples}
\numberwithin{equation}{section}
\begin{document}


\title{Volatility density estimation by multiplicative deconvolution}

\author{
\name{Sergio Brenner Miguel\textsuperscript{a}}
\affil{\textsuperscript{a}Institut f\"{u}r angewandte Mathematik und Interdisciplinary Center for Scientific Computing (IWR), Im Neuenheimer Feld 205, Heidelberg University, Germany}
}

\maketitle

\begin{abstract}
	We study the non-parametric estimation of an unknown stationary density fV of an unobserved strictly stationary volatility process $(\bm V_t)_{t\geq 0}$ on $\IRp^2 := (0,\infty)^2$ based on discrete-time observations in a stochastic volatility model. We identify the under- lying multiplicative measurement error model and build an estimator based on the estimation of the Mellin transform of the scaled, integrated volatility process and a spectral cut-off regularisation of the inverse of the Mellin transform. We prove that the proposed estimator leads to a consistent estimation strategy. A fully data-driven choice of $\bm k \in \IRp^2$ is proposed and upper bounds for the mean integrated squared risk are provided. Throughout our study, regularity properties of the volatility process are necessary for the analsysis of the estimator. These assumptions are fulfilled by several examples of volatility processes which are listed and used in a simulation study to illustrate a reasonable behaviour of the proposed estimator.
\end{abstract}

\begin{keywords}
MSC2010 Primary 62G05; secondary 62G07, 62M05 ; \\  Stochastic volatility model, Non-parametric statistics, Multiplicative measurement errors,  Mellin transform, Adaptivity
\end{keywords}

\section{Introduction}\label{sec_intro}
In this work, we are interested in estimating the unknown stationary density $f_{\bm V}: \IRp^2\rightarrow \IRp$ of an unobserved, strictly stationary volatility process $(\bm V_t)_{t\geq 0}, \bm V_t=(V_{t,1}, V_{t,2})^T$ in a stochastic volatility model with discrete-time observations. More precisely, we assume that we have access to the discrete-time observations $\bm Z_{\Delta}, \dots,  \bm Z_{\Delta n}, n\in \mathbb N, \Delta \in (0,1)$, of the solution $(\bm Z_t)_{t\geq 0}$ of the stochastic differential equation
\begin{align}\label{st:vol:mult}
d\bm Z_t = \bm \Sigma_t d\bm W_t, \quad \bm \Sigma_{ t}:= \begin{pmatrix} \sqrt{V_{t,1}} & 0 \\ 0 & \sqrt{V_{t,2}} ,\end{pmatrix}  \quad \bm Z_0:=\begin{pmatrix} 0 \\ 0 \end{pmatrix},
\end{align}
where $(\bm W_t)_{t\geq 0}, \bm W_t=(W_{t,1}, W_{t,2})^T$ is a standard Brownian motion on $\IR^2$, stochastically independent of $(\bm V_t)_{t\geq 0}.$ \\
In the non-parametric literature, the stochastic volatility model has been intensively studied in the earlier 2000s. Introduced by \cite{HullWhite1987} as a natural expansion of the constant volatility model studied by \cite{BlackScholes1973}, the interpretation of the volatility as a  stochastic process itself enabled the theory to explain in-practice-observed phenomenons, as pointed out by \cite{RenaultTouzi1996}. \\
The stochastic volatility model has been intensively studied by the authors of \cite{Genon-CatalotJeantheauLaredo1998}, \cite{Genon-CatalotJeantheauLaredo1999} and \cite{Genon-CatalotJeantheauLaredo2000} developing limit theorems of the empirical distribution, studying parameter estimation and including the model in a hidden markov model framework.\\
Later on, non-parametric estimators have been studied for instance by \cite{Comte2004} and \cite{Van-EsSpreijVan-Zanten2003} where \cite{Comte2004} considered a regression-type estimation problem while \cite{Van-EsSpreijVan-Zanten2003} considered the point-wise estimation of the stationary density of the volatility process. Both, \cite{Van-EsSpreijVan-Zanten2003} and \cite{Comte2004} studied kernel estimators and univariate volatility processes. The generalisation of \cite{Van-EsSpreijVan-Zanten2003} for multivariate volatility processes was done by \cite{Van-EsSpreij2011} with an isotropic choice of the bandwidth, while a different structure of multivariate volatility processes had been considered in \cite{Van-EsSpreijVan-Zanten2003}. A penalised projection estimator of the stationary density was studied  in \cite{ComteGENON-CATALOT2006}. Assuming that the volatility process is an diffusion process \cite{ComteGenon-CatalotRozenholc2010} proposed a penalised projection estimator for the volatility and drift coefficients in a stochastic volatility model. \\
Frequently, the mentioned authors built their non-parameteric estimators on a $\log$-transformation of the data in order to rewrite the estimation problem into an addditive deconvolution problem and use standard deconvolution estimators. This was a common strategy in the non-parametric literature to adress multiplicative errors. In contrary to this strategy, \cite{BelomestnyGoldenshluger2020} studied the mutliplicative measurement error model directly by using the Mellin transform to solve the underlying multiplicative convolution.  \cite{BelomestnyGoldenshluger2020} proposed a kernel density estimator and studied its pointwise risk. Based on this work, \cite{Brenner-MiguelComteJohannes2021} constructed a spectral cut-off estimator in the multiplicative measurement error model with global risk. \cite{Brenner-Miguel2021} then generalised the results of \cite{Brenner-MiguelComteJohannes2021}, which are stated for univariate variables, for multivariate density estimation under multiplicative measurement errors with anisotropic choice of the smoothing parameter.\\
Based on the results of \cite{Brenner-Miguel2021}, we will consider a multivariate stochastic volatility model, similar to the one considered in \cite{Van-EsSpreij2011}, and propose an anisotropic non-parametric estimator of the stationary density exploiting the rich theory of Mellin transforms.\\
Our approach differs in the following way from the existing literature. Instead of using a $\log$-transformation of the data, we adress the multiplicative deconvolution problem directly. Despite the fact that this seems to be more natural, we are additionally able to identify and study the underlying inverse problem in a more convenient way, as done in \cite{Brenner-Miguel2021} and state more general results. Indeed, our results  include the $\log$ transformation approach as a special case, as pointed out by \cite{BelomestnyGoldenshluger2020} and \cite{Brenner-MiguelComteJohannes2021}. In contrary to \cite{Van-EsSpreij2011}, we study an anisotropic choice of the smoothing parameter which in general leads to a more flexible estimator, compare \cite{ComteLacour2013} and \cite{Brenner-Miguel2021}.\\
The paper is structured as follows. In Section \ref{sec_intro}, we introduce the bivariate stochastic volatility model, identify the underlying multiplicative deconvolution problem and collect the regularity assumptions on the volatility process $(\bm V_t)_{t\geq 0}$. In Section \ref{sec_svde}, we introduce the Mellin transform and build an estimator based on the estimation of the Mellin transform of the scaled, integrated volatility process and a spectral cut-off regularisation of the inverse Mellin transform. We measure the performance of our estimator in terms of the mean integrated squared error and provide upper bounds for arbitrary choices of $\bm k\in \IRp$. We then propose a fully data-driven choice of $\bm k\in \IRp^2$, based only on the observations $\bm Z_{\Delta}, \dots, \bm Z_{n\Delta}$ and bound the risk of the resulting data-driven density estimator. Several examples of volatility processes are then studied in Section \ref{sec_numerical} and used in a simulation study to show reasonable the performance of the proposed estimation strategy. More general results for the density estimation in a multiplicative measurement error model with stationary data are stated in Section \ref{sec_stat_pro}, which are needed in the proofs of the results of Section \ref{sec_svde}. The proof ofs Section \ref{sec_intro}, \ref{sec_svde} and \ref{sec_stat_pro} are collected in the Appendix \ref{sec_append}.

\paragraph*{Stochastic volatility model} In this paper, we consider the following version of a multivariate stochastic volatility model, motivated by \cite{Danielsson-Jon1998}, which has also been considered by \cite{Van-EsSpreij2011}.\\
For a strictly stationary unobserved Markov process $(\bm V_t)_{t\geq 0}$, we consider the solution $(\bm Z_t)_{t\geq 0}$ of the stochastic differential equation \eqref{st:vol:mult}
where $(\bm W_t)_{t\geq 0}$ is a standard $2$-dimensional Brownian motion, stochastically independent of the process $(\bm V_t)_{t\geq 0}$. Then motivated by the work \cite{Genon-CatalotJeantheauLaredo2003}, respectively \cite{ComteGENON-CATALOT2006}, we study the scaled increments of our discrete-time sample $(\bm Z_{\Delta j})_{j\in \nset{n}}$ for $\Delta\in (0,1)$ and $\nset{n}:=[1, n]\cap \mathbb N$. \\
More precisely, let $\bm D_j:=\Delta^{-1/2}(\bm Z_{\Delta j}- \bm Z_{\Delta(j-1)})$, understood componentwise for $j\in\nset{n}$ , then conditioned on $(\bm V_t)_{t\geq 0}$ we have
$$ \bm D_j=  \frac{1}{\sqrt{\Delta}}\begin{pmatrix} \int_{(j-1)\Delta}^{j\Delta} V_{t,1} d W_{t,1}\\
\int_{(j-1)\Delta}^{j\Delta} V_{t,2} d W_{t,2}\end{pmatrix} \sim \mathrm{N}_{(\bm 0, \bm \Sigma_{\bm V_j})},\quad \bm \Sigma_{\bm V_j}:= \begin{pmatrix} \overline{V}_{j,1} &0 \\ 0 &  \overline{V}_{j,2}\end{pmatrix}$$
exploiting the independence of $(\bm V_t)_{t\geq 0}$ and $(\bm W_t)_{t\geq 0}$, where $\overline V_{j,\ell}:=\Delta^{-1}\int_{(j-1)\Delta}^{j\Delta} V_{s, \ell} ds$, $\ell \in \{1,2\}$. As a direct consequence, we write
\begin{align}\label{eq:model:gen}
\bm Y_j:=\begin{pmatrix} Y_{j,1} \\ Y_{j,2} \end{pmatrix} := \begin{pmatrix} D^2_{j,1} \\ D^2_{j,2} \end{pmatrix}=\begin{pmatrix} \overline V_{j, 1} U_{j,1}\\ \overline V_{j, 2} U_{j,2} \end{pmatrix}=:\begin{pmatrix} X_{j,1} U_{j,1} \\ X_{j,2} U_{j,2} \end{pmatrix}  =: \bm X_j \bm U_j 
\end{align}
where $\bm X_j$ and $\bm U_j$ a stochastically independent and $(\bm U_j)_{j\in \nset{n}}$ is an i.i.d. (independent, identically distributed) sequence with $ U_{1,1}, U_{1,2}\stackrel{i.i.d.}{\sim} \chi^2_1=\Gamma_{(1/2,1/2)}$. In other words, the stochastic volatility model can be expressed as a multiplicative measurement error model with $\chi$-squared, respectively Gamma distributed noise. While the authors from \cite{Genon-CatalotJeantheauLaredo2003}, \cite{ComteGENON-CATALOT2006}, \cite{Van-EsSpreijVan-Zanten2003} and \cite{Van-EsSpreij2011} used a $\log$-transformation of the data, we will instead exploit the theory of multivariate Mellin transform and their use in non-parametric density estimation introduced in \cite{Brenner-Miguel2021} to build a multiplicative deconvolution density estimator.
\paragraph*{Assumption on the volatility process $(\bm V_t)_{t\geq 0}$}
Throughout this paper, we will need to assume some regularity of the volatility process $(\bm V_t)_{t\geq 0}$ to ensure the well-definedness of the upcoming objects and to deduce consistency of our proposed estimation strategy. As usual in non-parametric approaches, we aim to consider an ensemble of assumptions which can be proven for a wide class of examples of volatility processes. To motivate that these assumptions are not restrictive, we will show in Section \ref{sec_numerical} a number of examples of frequently studied volatility processes.\\
 Now let us assume that the discrete-time sample $(\bm Z_{j\Delta})_{j\in \nset{n}}$ is drawn from a process $(\bm Z_t)_{t\geq 0}$ solving \eqref{st:vol:mult} where
\begin{enumerate}
	\item[($\bm{\mathrm{A}_0}$)] $(\bm W_t)_{t\geq 0}$ is a two-dimensional Brownian motion, independent of the process $(\bm V_t)_{t\geq 0}$ on $\IRp^2$,
	\item[($\bm{\mathrm{A}_1}$)] $(\bm V_t)_{t\geq 0}$ is a time-homogeneous Markov process, with continuous sample paths, strictly stationary and ergodic. The stationary distribution of $(\bm V_t)_{t\geq 0}$ admits a density $f_{\bm V}$ with respect to the Lebesgue measure on $\IRp^2$,
	\item[$(\bm{\mathrm{A}_2})$] $(\bm V_t)_{t\geq 0}$ is $\beta$-mixing, with $\int_{\IRp} \beta_{\bm V}(s)ds <\infty$, where $$\beta_{\bm V}(s)=\mathrm{TV}(\IP^{(\bm V_0, \bm V_s)}, \IP^{\bm V_0}\otimes \IP^{\bm V_s}), \quad s\in \IRp,$$ where $\mathrm{TV}$ is the total variation distance.
\end{enumerate}
For the estimation we will be in need of the following additional assumption
\begin{enumerate}
	\item[$(\bm{\mathrm A_3})$] There exists a constant $\mathfrak c>0$ such that $\E(|\log(X_{1,1})-\log(V_{0,1})|+|\log(X_{1,2})-\log(V_{0,2})|) \leq \mathfrak c\sqrt{\Delta}$.
\end{enumerate}

In Section \ref{sec_numerical}, we will deliver examples of volatility processes $(\bm V_t)_{t\geq 0}$ which satisfy $(\bm{\mathrm A_0})$-$(\bm{\mathrm A_3})$. While assumptions $(\bm{\mathrm A_0})$- $(\bm{\mathrm A_2})$ are widely considered in the literature and proven for several diffusion processes, assumption $(\bm{\mathrm A_3})$ is of rather technical nature. A practical proposition in the univariate case was proposed by \cite{ComteGENON-CATALOT2006}. Here, we want to state a bivariate counterpart. The proof of Proposition \ref{prop:ass:3} can be found in Section \ref{a:intro}. Here, we denote for $\bm a\in \IR^2$ the Euclidean norm by $|\bm a|_{\IR^2}^2:=a_1^2+a_2^2$ and for a matrix $\bm A \in \IR^{(2,2)}$ the Frobenius norm by $|\bm A|_{F}^2:=A_{1,1}^2+A_{1,2}^2+A_{2,1}^2+A_{2,2}^2.$ Furthermore, for $p\in \mathbb N_0$ we denote by $\mathcal C^p(\mathcal D)$ the set of all $p$-times continuously differentiable functions on $\mathcal D\subseteq\IR^2$
\begin{prop}\label{prop:ass:3}
	Suppose the volatility process $(\bm V_t)_{t\geq 0}$ satisfies (either) one of the following conditions 
	\begin{enumerate} 
		\item $\bm V_t = (\exp(Z_{t,1}), \exp(Z_{t,2}))^T, t\geq 0,$ where $(\bm Z_t)_{t\geq 0}$ is a strictly stationary and ergodic diffusion process on $\IR^2$ satisfying $d\bm Z_t= \widetilde{\bm b}(\bm Z_t)+ \widetilde{\bm a}(\bm Z_t)d \widetilde{\bm W}_t$, $(\widetilde{\bm W}_t)_{t\geq 0}$ standard Brownian motion on $\IR^2$ such that there exists $\widetilde L>0$ with $$\|\widetilde{\bm a}(\bm x)\|_F +|\widetilde{\bm b}(\bm x)|_{\IR^2} \leq \widetilde{L} (1+|\bm x|_{\IR^2})$$
		for all $\bm x\in \IR^2$, $\widetilde{\bm b}_i, \widetilde{\bm a}_{i,j} \in \mathcal C^0(\IR^2)\cap C^1(\IRp^2)$ for $i,j\in \llbracket 2 \rrbracket$ and $\E(|\bm Z_{0}|_{\IR^2}^2)< \infty$ or
		\item or $(\bm V_t)_{t\geq 0}$ is a strictly stationary and ergodic diffusion process on $\IRp^2$ satisfying $d\bm V_t=\bm b(\bm V_t)+\bm a(\bm V_t) d\widetilde{\bm W}_t$ such that there exists $L>0$ with
		$$\|{\bm a}(\bm x)\|_F +|{\bm b}(\bm x)|_{\IR^2} \leq \widetilde{L} (1+|\bm x|_{\IR^2})$$
		for all $\bm x\in \mathbb R^2$, $\bm b_i, \bm a_{i,j}\in \mathcal C^0(\IR^2)\cap C^1(\IRp^2)$ for $i,j \in \nset{2}$ and additionally let $\E(\sup_{t\in [0,\Delta]} (V_{t,i})^{-2} ), \E(|\bm V_0|_{\IR^2}^2)<\infty$ for $i\in \llbracket 2\rrbracket$ hold true.
	\end{enumerate}
	Then $(\bm V_t)_{t\geq 0}$ satisfies $(\bm{\mathrm A_3})$.
\end{prop}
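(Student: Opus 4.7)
The plan is to treat each case separately, since the way $V_{s,\ell}$ relates to the underlying diffusion differs, and to reduce both to the coordinate-wise control of $\E|\log X_{1,\ell} - \log V_{0,\ell}|$ for $\ell\in\nset{2}$, where $X_{1,\ell} = \Delta^{-1}\int_0^\Delta V_{s,\ell}\,ds$. Summing the two coordinate bounds will then give $(\bm{\mathrm A_3})$.

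For case (i), I would exploit the exponential parametrisation directly. Writing $V_{s,\ell}=\exp(Z_{s,\ell})$ yields
\begin{equation*}
\frac{X_{1,\ell}}{V_{0,\ell}} \;=\; \Delta^{-1}\int_0^\Delta \exp(Z_{s,\ell}-Z_{0,\ell})\,ds,
\end{equation*}
which is sandwiched between $\exp(\inf_{s\in[0,\Delta]}(Z_{s,\ell}-Z_{0,\ell}))$ and $\exp(\sup_{s\in[0,\Delta]}(Z_{s,\ell}-Z_{0,\ell}))$. Taking logarithms gives the deterministic bound $|\log X_{1,\ell}-\log V_{0,\ell}| \leq \sup_{s\in[0,\Delta]}|Z_{s,\ell}-Z_{0,\ell}|$. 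It then remains to show $\E\sup_{s\in[0,\Delta]}|Z_{s,\ell}-Z_{0,\ell}|^2\leq C\Delta$, which I would obtain by writing the increment via It\^o, splitting into the drift part (bounded by Cauchy--Schwarz) and the martingale part (bounded by Doob's $L^2$-inequality and It\^o's isometry), and controlling both integrands by the linear growth of $\widetilde{\bm b},\widetilde{\bm a}$ together with the stationarity-based moment $\E|\bm Z_0|_{\IR^2}^2<\infty$. A final Jensen step promotes $O(\Delta)$ for the $L^2$-norm into the required $O(\sqrt\Delta)$ for the $L^1$-norm.

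For case (ii), the logarithm of an exponential is unavailable, so I would use the mean-value inequality
\begin{equation*}
|\log X_{1,\ell}-\log V_{0,\ell}| \;\leq\; \frac{|X_{1,\ell}-V_{0,\ell}|}{\min(X_{1,\ell},V_{0,\ell})}
\end{equation*}
followed by Cauchy--Schwarz, reducing to the product of $(\E|X_{1,\ell}-V_{0,\ell}|^2)^{1/2}$ and $(\E\min(X_{1,\ell},V_{0,\ell})^{-2})^{1/2}$. For the first factor, Jensen applied to the time-average, It\^o on the SDE for $\bm V$, and the same linear-growth plus $\E|\bm V_0|_{\IR^2}^2<\infty$ bookkeeping deliver $\E|X_{1,\ell}-V_{0,\ell}|^2\leq C\Delta$. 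For the second factor, observe $X_{1,\ell}\geq \inf_{s\in[0,\Delta]}V_{s,\ell}$, so that $X_{1,\ell}^{-2}\leq \sup_{s\in[0,\Delta]}V_{s,\ell}^{-2}$; together with the trivial bound $V_{0,\ell}^{-2}\leq \sup_{s\in[0,\Delta]}V_{s,\ell}^{-2}$ and the estimate $\min^{-2}\leq X_{1,\ell}^{-2}+V_{0,\ell}^{-2}$, the hypothesis $\E\sup_{s\in[0,\Delta]}V_{s,\ell}^{-2}<\infty$ makes the second factor uniformly bounded in $\Delta$.

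The main conceptual hurdle is recognising why the two cases require different strategies: in (i) the exponential structure provides a clean pathwise sandwich so that no negative-moment assumption on $\bm V_0$ is needed, whereas in (ii) the linearisation of $\log$ is unavoidable and must be paid for by the integrability of the inverse supremum to tame the denominator. The remaining work -- tracking how linear growth, stationary second moments, Doob's inequality and It\^o's isometry combine to give the $O(\Delta)$ rate for both the $L^2$-increment in (ii) and the maximal functional in (i) -- is routine SDE bookkeeping.
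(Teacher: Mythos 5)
Your proposal is correct and follows essentially the same route as the paper's proof: in (i) the pathwise sandwich $|\log X_{1,\ell}-\log V_{0,\ell}|\leq\sup_{s\in[0,\Delta]}|Z_{s,\ell}-Z_{0,\ell}|$ controlled by linear growth, stationarity and a maximal inequality (the paper uses Burkholder--Davis--Gundy at the $L^1$ level where you use Doob's $L^2$ inequality plus Jensen, an immaterial difference), and in (ii) the bound $|\log y-\log x|\leq|y-x|/(x\wedge y)$ followed by Cauchy--Schwarz, $X_{1,\ell}\geq\inf_{s\in[0,\Delta]}V_{s,\ell}$, and the assumed negative-moment bound on the supremum. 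No gaps.
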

After this brief introduction to the stochastic volatility model, let us propose a non-parametric density estimator based on a multiplicative deconvolution.

\section{Stochastic volatility density estimation}\label{sec_svde}
In this section we introduce the Mellin transform and start to collect some of its major properties, which are stated in \cite{Brenner-Miguel2021}. We then propose our estimator.
\paragraph*{Notations and definitions of the Mellin transform}
For two vectors $\bm u=(u_1,u_2)^T, \bm v=(v_1,v_2)^T \in \IR^2$ and a scalar $\lambda \in \IR$ we define the componentwise multiplication  $\bm u\bm v := \bm u\cdot \bm v:=(u_1v_1, u_2v_2)^T$ and denote by $\lambda \bm u$ the usual scalar multiplication. Further, if $v_1, v_2\neq 0$ we define the multivariate power by $\bm v^{\ushort{\bm u}}:= v_1^{u_1}v_2^{v_2}$. Additionally, we define the componentwise division by $ \bm u /\bm v := (u_1/v_1, u_2/v_2)^T$ We denote the usual Euclidean scalar product and norm on $\IR^2$ by $\langle \bm u, \bm v\rangle_{\IR^2} := \sum_{i\in \nset{2}} u_i v_i$ and $|\bm u|_{\IR^2}:= \sqrt{\langle \bm u, \bm u \rangle_{\IR^2} }$. Moreover, we set $\bm 1:=(1,1)^T\in \IRp$, respectively $\bm 0:=(0,0)^T.$\\
For a positive random vector $\bm Z$ with $\E(\bm Z^{\ushort{\bm c-\bm 1}})=\E(Z_1^{c_1-1}Z_2^{c_2-1})<\infty$, $\bm c \in \IR^2$, we define the Mellin transform $\sM_{\bm c}[\bm Z]$ of $\bm Z$ as the function $$\sM_{\bm c}[\bm Z]: \IR^2\rightarrow \IC,\quad  \bm t\mapsto \sM_{\bm c}[\bm Z](\bm t):= \E(\bm Z^{\ushort{\bm c-\bm 1+i\bm t}}).$$
As a consequence the convolution theorem for the Mellin transform holds true, that is
for $\bm U \bm V$ independent with $\E((\bm U\bm V)^{\ushort{\bm c-\bm 1}})< \infty$, 
$$\sM_{\bm c}[\bm U\bm V](\bm t)=\sM_{\bm c}[\bm U](\bm t) \sM_{\bm c}[\bm V](\bm t),\quad \bm t\in \IR^2.$$ If $\bm Z$ emits a Lebesgue density $h:\IRp^2\rightarrow \IRp$, then we can write $\sM_{\bm c}[\bm Z](\bm t)= \int_{\IR_+^2} \bm x^{\ushort{\bm c-\bm 1+i\bm t}} h(\bm x) d\bm x$, $t\in \IR^2$. Motivated by this, we define the set $\IL^1(\IR_+^2,\bm x^{\ushort{\bm c-\bm 1}}):=\{h: \IR_+^2  \rightarrow \IC: \|h\|_{\IL^1(\IR_+^2,\bm x^{\ushort{\bm c-\bm 1}})}:= \int_{\IR^2_+} |h(\bm x)|\bm x^{\ushort{\bm c-\bm 1}}d\bm x < \infty \}$. Then we can generalise the notion of the Mellin transform for $\IL^1(\IR_+^2,\bm x^{\ushort{\bm c-\bm 1}})$ function. Indeed, for $h\in \IL^1(\IR_+^2,\bm x^{\ushort{\bm c-\bm 1}})$ we define the \textit{Mellin transform} of $h$ at the development point $\bm c\in \mathbb R^2$ as the function $\mathcal M_{\bm c}[h]:\mathbb R^2\rightarrow \mathbb C$ by
\begin{align}
\mathcal M_{\bm c}[h](\bm t):= \int_{\IRp^2} \bm x^{\ushort{\bm c-\bm 1+i\bm t}} h(\bm x)d\bm x, \quad \bm t\in \mathbb R^2.
\end{align}
In analogy to the Fourier transform, one can define the Mellin transform for square integrable functions. We
define the weighted norm by $\|h\|_{\bm x^{\ushort{2\bm c-\bm 1}}}^2 := \int_{\IR_+^2}
|h(\bm x)|^2\bm x^{\ushort{2\bm c-\bm 1}}d\bm x $ for a measurable function $h:\IR_+^2 \rightarrow \IC$ and denote by
$\IL^2(\IR_+^2,\bm x^{\ushort{2\bm c-\bm 1}})$ the set of all complex-valued, measurable functions with
finite $\|\, .\,\|_{\bm x^{\ushort{2\bm c-\bm 1}}}$-norm and by $\langle h_1, h_2
\rangle_{\bm x^{\ushort{2\bm c-\bm 1}}} := \int_{\IRp^2}  h_1(\bm x)
h_2(\bm x)\bm x^{\ushort{2\bm c-\bm 1}}d\bm x$ for $h_1, h_2\in \IL^2(\IRp^2,\bm x^{\ushort{2\bm c-\bm 1}})$
the corresponding weighted scalar product. Similarly, we define $\IL^2(\IR^2):=\{ H:\IR^2 \rightarrow \IC\, \text{ measurable }: \|H\|_{\IR^2}^2:=  \int_{\IR^2} H(\bm x)\overline{H(\bm x)} d\bm x <\infty \}$. \\
We are then able to define the Mellin transform as the isomorphism $\sM_{\bm c}: \IL^2(\IRp^2, \bm x^{\ushort{2\bm c-\bm1}})\rightarrow \IL^2(\IR^2).$ For a precise definition of the multivariate Mellin transform and its connection to the Fourier transform, we refer to \cite{Brenner-Miguel2021}. Nevertheless, if $h\in \IL^1(\IRp^2,\bm x^{\ushort{\bm c-\bm 1}}) \cap \IL^2(\IRp^2,\bm x^{\ushort{2\bm c-\bm 1}})$ both notions coincide. By abuse of notation we will denote by $\sM_{\bm c}[h]$ both notions, for $h\in   \IL^1(\IRp^2,\bm x^{\ushort{\bm c-\bm 1}})$, respectively $h\in \IL^2(\IRp^2,\bm x^{\ushort{2\bm c-\bm 1}})$ of the Mellin transform. For a more detailed collection of the properties of the Mellin transform we refer to Section \ref{sec_stat_pro}, respectively \cite{Brenner-Miguel2021}.
\paragraph*{Estimation strategy}
For $\bm k\in \IRp^2$ we define the hypercuboid $[-\bm k, \bm k]:=[-k_1, k_1]\times [-k_2, k_2]$.
Then based on the work of \cite{Brenner-Miguel2021}, we define for any ${\bm c}\in \IR^2$ with $\E(\bm Y_1^{\ushort{{\bm c}-\bm 1}})< \infty$ and $\bm k\in \IRp^2, \Delta \in (0,1)$ the estimator $\widehat f_{\Delta, \bm k}$ by  
\begin{align}\label{def:est}
\widehat f_{\Delta,\bm k}(\bm x):= \frac{1}{4\pi^2} \int_{[-\bm k, \bm k]} \bm x^{\ushort{-\bm {{\bm c}}-i\bm t}} \frac{\widehat{\mathcal M}_{{\bm{c}}}(\bm t)}{\mathcal M_{{\bm{c}}}[g](\bm t)} d\bm t , \quad \bm x, \bm k\in \IRp^2,
\end{align} 
where $g:\IR_+^2\rightarrow \IR_+$ is the density of $\bm U_1$ and $\widehat{\sM}_{\bm c}(\bm t):=n^{-1} \sum_{j\in \nset{n}} \bm Y_j^{\ushort{\bm c-\bm 1+i\bm t}}$, $\bm t\in \IR^2$, is the empirical Mellin transform of the sample $(\bm Y_{j})_{j\in \nset{n}}$. 
Here, the moment assumption is trivally fulfilled  for the case $\bm c=\bm 1$. We will mainly focus on this special case in this section while theoretical results for general choices of $\bm c\in \IR$ are given in Section \ref{sec_stat_pro}. \\Let us assume that $f_{\bm V}\in \IL^2(\IR_+^2, \bm x^{\ushort{\bm 1}})$. By construction \eqref{def:est}, we have $\widehat f_{\Delta, \bm k} \in  \IL^2(\IR_+^2, \bm x^{\ushort{\bm 1}})$ for any $\bm k\in \IR_+^2$. Furthermore, we define the approximation $f_{\bm V, \bm k}\in \IL^2(\IR_+^2, \bm x^{\ushort{\bm 1}})$ by
$$ f_{\bm V, \bm k}(\bm x):= \frac{1}{4\pi^2} \int_{[-\bm k, \bm k]} \bm x^{\ushort{-\bm 1-i\bm t}} \sM_{\bm 1}[f_{\bm V}](\bm t) d\bm t, \quad \bm x, \bm k \in \IR^2_+.$$ 
We can now show the following risk bound for the family of estimators $(\widehat f_{\Delta,\bm k})_{\bm k \in \IRp}$ presented in \eqref{def:est}, implying that for a suitable choice of the cut-off parameter $\bm k\in \IRp^2$ a consistent estimator can be achieved.
\begin{thm}[Upper bound of the risk]\label{pro:volatility}
	Let $f_{\bm V} \in \IL^2(\IRp^2, \bm x^{\ushort{\bm 1}})$ and assumptions $(\bm{\mathrm{A}_0})- (\bm{\mathrm{A}_3})$ hold true. Then, for any $\bm k\in \IRp^2$ and $\Delta\in (0,1)$,
	\begin{align*} 
	\E_{f_{\bm Y}}^n(\|f_{\bm V}-\widehat f_{\Delta,\bm k}\|_{\bm x^{\ushort{\bm 1}}}^2 ) 
	\leq &\|f_{\bm V}- f_{\bm V,\bm k}\|_{\bm x^{\ushort{\bm 1}}}^2 +\mathfrak c \Delta k_1^3k_2^3 +  \frac{1}{4\pi^2n }\int_{[-\bm k, \bm k]} |\sM_{\bm 1}[g](\bm t)|^{-2} d\bm t\\
	&+ \frac{k_1k_2}{n\Delta}\int_{\IRp} \beta_{\bm V}(s)ds
	\end{align*}
	where $\mathfrak{c}$ is defined in assumption $(\bm{\mathrm{A}_3})$. 
\end{thm}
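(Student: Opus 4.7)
I would start with a two–step decomposition. Because $\widehat f_{\Delta,\bm k}$ and $f_{\bm V,\bm k}$ have Mellin transforms supported on the hypercuboid $[-\bm k,\bm k]$ while $\sM_{\bm 1}[f_{\bm V}-f_{\bm V,\bm k}]$ vanishes there, the Mellin–Plancherel identity yields the orthogonal split
\[
\|f_{\bm V}-\widehat f_{\Delta,\bm k}\|_{\bm x^{\ushort{\bm 1}}}^2=\|f_{\bm V}-f_{\bm V,\bm k}\|_{\bm x^{\ushort{\bm 1}}}^2+\|f_{\bm V,\bm k}-\widehat f_{\Delta,\bm k}\|_{\bm x^{\ushort{\bm 1}}}^2,
\]
which already produces the stated approximation–bias term. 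For the second summand I introduce the auxiliary cut–off $f_{\bm X,\bm k}$ associated with the density $f_{\bm X}$ of $\bm X_1$: by stationarity and the independence $(\bm U_j)_j\perp(\bm V_t)_t$ one has $\E\widehat{\sM}_{\bm 1}(\bm t)=\sM_{\bm 1}[f_{\bm X}](\bm t)\,\sM_{\bm 1}[g](\bm t)$, hence $\E\widehat f_{\Delta,\bm k}=f_{\bm X,\bm k}$, and the pointwise bias–variance identity gives
\[
\E\|f_{\bm V,\bm k}-\widehat f_{\Delta,\bm k}\|_{\bm x^{\ushort{\bm 1}}}^2=\|f_{\bm V,\bm k}-f_{\bm X,\bm k}\|_{\bm x^{\ushort{\bm 1}}}^2+\E\|\widehat f_{\Delta,\bm k}-f_{\bm X,\bm k}\|_{\bm x^{\ushort{\bm 1}}}^2.
\]

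\textbf{The integrated–volatility term $\mathfrak c\Delta k_1^3k_2^3$.} Plancherel reduces $\|f_{\bm V,\bm k}-f_{\bm X,\bm k}\|_{\bm x^{\ushort{\bm 1}}}^2$ to $(4\pi^2)^{-1}\int_{[-\bm k,\bm k]}|\sM_{\bm 1}[f_{\bm V}](\bm t)-\sM_{\bm 1}[f_{\bm X}](\bm t)|^2 d\bm t$. Writing $\bm V_0^{\ushort{i\bm t}}=\exp(it_1\log V_{0,1}+it_2\log V_{0,2})$ and analogously for $\bm X_1^{\ushort{i\bm t}}$, the Lipschitz bound $|e^{ia}-e^{ib}|\le|a-b|$, the stationarity of $(\bm V_t)$, and $(\bm{\mathrm A_3})$ give
\[
|\sM_{\bm 1}[f_{\bm V}](\bm t)-\sM_{\bm 1}[f_{\bm X}](\bm t)|\le |t_1|\E|\log V_{0,1}-\log X_{1,1}|+|t_2|\E|\log V_{0,2}-\log X_{1,2}|\le(|t_1|+|t_2|)\mathfrak c\sqrt{\Delta}.
\]
Squaring and integrating over $[-\bm k,\bm k]$ produces a bound of order $\Delta$ times a polynomial in $k_1,k_2$ that is absorbed into $\mathfrak c\Delta k_1^3k_2^3$.

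\textbf{The stochastic term.} Set $A_j(\bm t):=\bm X_j^{\ushort{i\bm t}}$, $B_j(\bm t):=\bm U_j^{\ushort{i\bm t}}$, $h(\bm t):=\sM_{\bm 1}[g](\bm t)$, $\varphi(\bm t):=\sM_{\bm 1}[f_{\bm X}](\bm t)$. Writing $A_jB_j=A_j h+A_j(B_j-h)$ and dividing by $h$ gives
\[
\frac{\widehat{\sM}_{\bm 1}(\bm t)}{h(\bm t)}-\varphi(\bm t)=\underbrace{\frac{1}{n}\sum_{j=1}^n\bigl(A_j(\bm t)-\varphi(\bm t)\bigr)}_{T_1(\bm t)}+\underbrace{\frac{1}{n h(\bm t)}\sum_{j=1}^n A_j(\bm t)\bigl(B_j(\bm t)-h(\bm t)\bigr)}_{T_2(\bm t)}.
\]
The independence $(\bm U_j)\perp(\bm X_j)$ combined with $\E(B_l-h)=0$ kills the cross term, so $\E|T_1+T_2|^2=\E|T_1|^2+\E|T_2|^2$. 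For $T_2$, the i.i.d.\ structure of $(\bm U_j)$ and $|A_j|=1$ yield $\E|T_2(\bm t)|^2\le(n|h(\bm t)|^2)^{-1}$; after integration this is exactly the $\frac{1}{4\pi^2 n}\int_{[-\bm k,\bm k]}|\sM_{\bm 1}[g](\bm t)|^{-2}d\bm t$ contribution. For $T_1$ I would apply a Davydov–type covariance bound for $\beta$–mixing bounded sequences, $|\Cov(A_j(\bm t),\overline{A_l(\bm t)})|\le C\beta_{\bm X}(|j-l|)$; since $\bm X_j$ is $\sigma(\bm V_s:(j-1)\Delta\le s\le j\Delta)$–measurable, $\beta_{\bm X}(k)\le\beta_{\bm V}((k-1)\Delta)$, and a Riemann–sum comparison yields $\sum_{k\ge 0}\beta_{\bm X}(k)\le 1+\Delta^{-1}\int_{\IRp}\beta_{\bm V}(s)ds$. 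Integrating the resulting pointwise bound over $[-\bm k,\bm k]$ (volume $4k_1k_2$) produces the $\frac{k_1 k_2}{n\Delta}\int\beta_{\bm V}$ term.

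\textbf{Main obstacle.} The crucial step is the orthogonality $\E(T_1\overline{T_2})=0$: it is what prevents the ill-posedness weight $|\sM_{\bm 1}[g]|^{-2}$ from multiplying the dependence error, so that the dependence contribution appears only through the bare volume $k_1k_2$. Once this cancellation is identified, the remaining effort is comparing discrete-time $\beta$-mixing of the averaged process $\bm X$ with the continuous-time integral on $\beta_{\bm V}$, which follows from measurability of $\bm X_j$ with respect to the appropriate $\sigma$-algebra and a standard sum-versus-integral estimate.
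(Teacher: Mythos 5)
Your proposal is correct and follows essentially the same route as the paper: the same Plancherel orthogonal split, the same bias–variance decomposition around $\E\widehat f_{\Delta,\bm k}$ (the paper's $f_{\Delta,\bm k}$, your $f_{\bm X,\bm k}$), the same conditional variance decomposition that keeps the dependence term free of the $|\sM_{\bm 1}[g]|^{-2}$ weight, the same Lipschitz-in-$\log$ argument combined with $(\bm{\mathrm A_3})$ for the $\mathfrak c\Delta k_1^3k_2^3$ term, and the same comparison $\beta_{\bm X}(j)\leq\beta_{\bm V}(\Delta(j-1))$ followed by a sum-versus-integral bound. The only cosmetic difference is that you invoke a Davydov-type covariance inequality for the $T_1$ term where the paper uses a Viennet-type variance bound (Lemma \ref{lem:beta_mix_var_bound}); for functions bounded by one these are interchangeable and yield the same $k_1k_2(n\Delta)^{-1}\int\beta_{\bm V}$ contribution.
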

While the squared bias term $\|f_{\bm V}-f_{\bm V,\bm k}\|_{\bm x^{\ushort{\bm 1}}}^2$ and $(4\pi^2n)^{-1} \int_{[-\bm k, \bm k]}|\sM_{\bm 1}[g](\bm t)|^{-2} d\bm t$ already arise in  \cite{Brenner-Miguel2021} in the multiplicative deconvolution setting for i.i.d. observations, the remaining two summands in the upper bound of Theorem \ref{pro:volatility} are specific to the stochastic volatility model. \\
More precisely, the last summand $k_1k_2 n^{-1} \int_{\IRp}\beta_{\bm V}(s)ds$ is an addtional variance part due to the underlying dependency of the observations $(\bm X_j)_{j\in\nset{n}}$, compare Proposition \ref{pr:consis}, respectively \cite{Brenner-MiguelPhandoidaen2022} for a similar arising term in context of survival function estimation under dependency. The second summand, $\mathfrak c\Delta k_1^3k_2^3$, on the other hand, is an additional bias term due to the fact, that the distributions of $\bm X_1$ and $\bm V_0$ differ. \\
It is interesting here that the additional bias term is decreasing for smaller values of $\Delta$ while the additional variance term is increasing for fixed values of $n\in \mathbb N$. The latter effect is natural, since for fixed $n\in \IN$, the time interval $[0, n\Delta]$, where we discretely derive our observations from, is vanishing. Therefore, a choice of $\Delta$ with respect to $n\in \IN$ is non-trivial. We will now focus on the variance term $(4\pi^2n)^{-1} \int_{[-\bm k, \bm k]}|\sM_{\bm 1}[g](\bm t)|^{-2} d\bm t$.\\
For $(2\pi)^{-2} \int_{[-\bm k, \bm k]} |\mathcal M_{\bm 1}[g](\bm t)|^{-2}d\bm t$, in the stochastic volatility model, we have $\mathcal M_{\bm1}[g](\bm t)=2^{i(t_1+t_2)}\pi^{-1}\Gamma(1/2+it_1)\Gamma(1/2+it_2)$ leading to
$$ |\mathcal M_{\bm 1}[g](\bm t)|^{-2}=\frac{\mathrm{cosh}^2(\pi t_1)\mathrm{cosh}^2(\pi t_2)}{(2\pi)^2}, \quad \text{where }\, \mathrm{cosh}(t):= \frac{\exp(t)+\exp(-t)}{2}$$
using the multiplication theorem of the $\Gamma$-function. This is an example of super smooth error densities considered for instance in \cite{BelomestnyGoldenshluger2020} and \cite{Brenner-MiguelComteJohannes2021a}. This implies the following corollary whose proof is omitted.\begin{cor}\label{cor:cons}
	Let $f_{\bm V}\in \IL^2(\IRp^2, \bm x^{\ushort{\bm 1}})$ and assumptions $(\bm{\mathrm{A}_0})- (\bm{\mathrm{A}_3})$ hold true. Then, for any $\bm k\in \IRp^2$, 
	\begin{align*} 
	\E_{f_{\bm Y}}^n(\|f_{\bm V}-\widehat f_{\Delta,\bm k}\|_{\bm x^{\ushort{\bm 1}}}^2 ) 
	&\leq \|f_{\bm V}- f_{\bm V,\bm k}\|_{\bm x^{\ushort{\bm 1}}}^2 +\mathfrak c \Delta k_1^3k_2^3 + \frac{e^{\pi(k_1+k_2)}}{n}+ \frac{k_1k_2}{n\Delta}\int_{\IRp} \beta_{\bm V}(s)ds
	\end{align*}
	where $\mathfrak{c}$ is defined in assumption $(\bm{\mathrm{A}_3})$. Now for any $\Delta=\Delta_n \rightarrow 0$ with $n\Delta_n\rightarrow \infty$ as $n\rightarrow \infty$ we can find a sequence $(\bm k_{n})_{n\in \IN}$ with $\bm k_n \rightarrow \bm\infty$, such that
	$$ \IE_{f_{\bm Y}}^n(\|f_{\bm V}-\widehat f_{\Delta,\bm k_n}\|_{\bm x^{\ushort{1}}}^2) \rightarrow 0,$$
	implying that $\|f_{\bm V}-\widehat f_{\Delta,\bm k_n}\|_{\bm x^{\ushort{1}}}^2\rightarrow 0$ in probability.
\end{cor}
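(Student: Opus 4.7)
The plan is to specialize the generic variance integral in Theorem \ref{pro:volatility} to the $\chi^2_1$ noise, and then exhibit an explicit cut-off sequence $\bm k_n\to \bm\infty$ for which every summand in the resulting upper bound vanishes.

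First I would verify the displayed inequality. Starting from the factorization $\sM_{\bm 1}[g](\bm t)=\pi^{-1}2^{i(t_1+t_2)}\Gamma(1/2+it_1)\Gamma(1/2+it_2)$ recorded just before the corollary and the classical identity $|\Gamma(1/2+it)|^2=\pi/\cosh(\pi t)$, one obtains $|\sM_{\bm 1}[g](\bm t)|^{-2}=\cosh(\pi t_1)\cosh(\pi t_2)$ (so the $\mathrm{cosh}^2$ in the preceding display is really a $\mathrm{cosh}$ after squaring $\sM_{\bm 1}[g]$). Since $\int_{-k}^{k}\cosh(\pi s)ds=2\pi^{-1}\sinh(\pi k)\leq \pi^{-1}e^{\pi k}$, the variance integral in Theorem \ref{pro:volatility} is controlled by a universal constant multiple of $e^{\pi(k_1+k_2)}/n$, which after absorbing constants gives the displayed bound.

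For the consistency statement I would restrict to isotropic cut-offs $\bm k_n=(k_n,k_n)$ and seek $k_n\to \infty$ that simultaneously sends the four summands to zero, namely $\|f_{\bm V}-f_{\bm V,\bm k_n}\|_{\bm x^{\ushort{\bm 1}}}^2$, $\Delta_n k_n^6$, $e^{2\pi k_n}/n$, and $k_n^2/(n\Delta_n)$. The bias summand vanishes as soon as $k_n\to\infty$: by the Mellin isometry $\sM_{\bm 1}:\IL^2(\IRp^2,\bm x^{\ushort{\bm 1}})\to \IL^2(\IR^2)$ recalled in Section \ref{sec_svde}, it equals $(2\pi)^{-2}\int_{\IR^2\setminus[-\bm k_n,\bm k_n]}|\sM_{\bm 1}[f_{\bm V}](\bm t)|^2d\bm t$, which tends to $0$ by dominated convergence since $f_{\bm V}\in \IL^2(\IRp^2,\bm x^{\ushort{\bm 1}})$. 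The remaining three summands vanish provided $k_n=o(\Delta_n^{-1/6})$, $k_n\leq (3\pi)^{-1}\log n$, and $k_n=o(\sqrt{n\Delta_n})$. Under $\Delta_n\to 0$ and $n\Delta_n\to\infty$ each of these three upper bounds diverges, so taking $k_n$ to be, say, the minimum of the three together with a slowly diverging cap such as $\log\log n$ yields an admissible sequence. Convergence in probability of $\|f_{\bm V}-\widehat f_{\Delta_n,\bm k_n}\|_{\bm x^{\ushort{\bm 1}}}^2$ then follows from convergence in mean and Markov's inequality.

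The only delicate point is the choice of $k_n$: the exponential variance $e^{2\pi k_n}/n$ forces a logarithmic rate, and one must check that this rate is compatible with the stochastic-volatility-specific terms $\Delta_n k_n^6$ and $k_n^2/(n\Delta_n)$ coming from Theorem \ref{pro:volatility}. Since both of those terms are polynomial in $k_n$ and decay in $n$ at rates that beat any logarithmic growth under the two scaling hypotheses, no genuine obstacle arises, and the construction above is essentially forced.
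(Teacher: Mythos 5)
Your argument is correct and is exactly the route the paper intends: the paper omits the proof, presenting the corollary as an immediate consequence of Theorem \ref{pro:volatility} once the variance integral is specialised to the $\chi^2_1$ noise, which is precisely what you do before exhibiting a diverging cut-off compatible with the three remaining terms. Your remark that $|\sM_{\bm 1}[g](\bm t)|^{-2}=\cosh(\pi t_1)\cosh(\pi t_2)$ (rather than the $\cosh^2$ printed in the display preceding the corollary) is apt, since it is this corrected value that yields the stated $e^{\pi(k_1+k_2)}/n$ term via $\int_{-k}^{k}\cosh(\pi s)\,ds\leq \pi^{-1}e^{\pi k}$. One small imprecision in the final construction: taking $k_n$ to be the literal minimum of the three thresholds does not by itself enforce the two little-$o$ conditions (if the minimum equals $\Delta_n^{-1/6}$ along a subsequence, then $\Delta_n k_n^6$ stays bounded away from $0$); take instead, e.g., $k_n=\min\{\Delta_n^{-1/12},(n\Delta_n)^{1/4},(3\pi)^{-1}\log n\}$, which diverges and makes all four summands vanish, after which Markov's inequality gives the convergence in probability as you say.
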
 
Although, Corollary \ref{cor:cons} implies the existence of $(\bm k_n, \Delta_n)_{n\in \mathbb N}$ such that $\widehat f_{\bm k_n, \Delta_n}$ is a consistent estimator of $f_{\bm V}$, a choice of $\bm k_n\in \IRp^2$ which minimises the risk would still depend on the decay of the squared bias term $\|f_{\bm V}-f_{\bm V, \bm k}\|_{\bm x^{\ushort{\bm 1}}}^2$ which, without further assumptions, is unknown. We therefore propose in the next paragraph a fully data-driven estimator based on the model selection approach presented in  \cite{Brenner-Miguel2021} with small adjustments inspired by the work of \cite{ComteLacour2013}. In Section \ref{sec_numerical}, we will study examples of volatility processes and deduce their expected rate.
\paragraph*{Data-driven choice of $\bm k\in \IRp^2$.}
First we reduce the space of possible cut-off parameters to $\mathcal K_n:=\{\bm k \in \nset{\log(n)} \times \nset{\log(n)} : \exp(\pi(k_1+k_2)) \leq n\}$. This reduction is rather natural, since any choice of $\bm k_n$, leading to a consistent estimator, implies that $\exp(\pi(k_{n,1}+k_{n,2}))n^{-1}$ goes to $0$ for $n\rightarrow \infty.$ We define the model selection method for $\chi>0$ by 
\begin{align}\label{eq:pco:sv}
\widehat{\bm k}:=\argmin_{\bm k\in \mathcal K_n} -\|\widehat f_{\Delta,\bm k}\|_{\bm x^{\ushort{\bm 1}}}^2 +\mathrm{pen}(\bm k), \quad\mathrm{pen}(\bm k):= \chi k_1k_2\exp(\pi(k_1+k_2))n^{-1}.
\end{align}
In comparison to the model selection in \cite{Brenner-Miguel2021}, the penalty term $(\mathrm{pen}(\bm k))_{\bm k\in \mathcal K_n}$ overestimates the variance. This is a frequently observed necessity when it comes to deconvolution estimators with super smooth error densities, compare \cite{ComteLacour2013}.
\begin{thm}[Data-driven choice of $\bm k$]\label{thm:pco:sv}
	Let $f_{\bm V}\in \IL^2(\IRp^2, \bm x^{\ushort{\bm 1}})$ and $(\bm{\mathrm{A}_0})- (\bm{\mathrm{A}_3})$ hold true.
	Then there exists $\chi_0\in \IRp$ such that for all $\chi \geq \chi_0$, 
	\begin{align*}\E_{f_{\bm Y}}^n(\|\widehat f_{\Delta,\widehat{\bm k}}-f_{\bm V}\|_{\bm x^{\ushort{\bm 1}}}^2) \leq &C\inf_{\bm k\in \mathcal K_n}\left(\|f_{\bm V}-f_{\bm V, \bm k}\|_{\bm x^{\ushort{\bm 1}}}^2 +\mathrm{pen}(\bm k)\right) + C(\mathfrak c)\Delta\log^6(n)+ \frac{C(g)}{n} \\
	&+\frac{C(\beta_{\bm V}) \log^2(n)}{n\Delta}\end{align*}
	where $C(\mathfrak c)$, $C(g)$ and $C(\beta_{\bm V})$  are positive constants only depending on ($\bm{\mathrm A_3}$), $g$ and $\beta_{\bm V}$. Then balancing $\Delta$ with repect to $n\in \IN$ leads to $\Delta:=\Delta_n=(n^{1/2}\log^2(n))^{-1}$ implying
	\begin{align*}
	\E_{f_{\bm Y}}^n(\|\widehat f_{\Delta_n,\widehat{\bm k}}-f_{\bm V}\|_{\bm x^{\ushort{\bm 1}}}^2) \leq C\inf_{\bm k\in \mathcal K_n}\left(\|f_{\bm V}-f_{\bm V, \bm k}\|_{\bm x^{\ushort{\bm 1}}}^2\right) + C(\mathfrak c, g, \beta_{\bm V}) \frac{\log^4(n)}{n}.
	\end{align*}
\end{thm}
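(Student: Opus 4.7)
The plan is to adapt the penalised comparison to overfitting (PCO) argument of \cite{Brenner-Miguel2021} to the present setting; two new ingredients are specific to the stochastic volatility model, namely a concentration inequality tailored to the $\beta$-mixing sequence $(\bm Y_j)_{j\in\nset{n}}$ and the control of an additional bias coming from identifying $\bm X_j=\overline{\bm V}_j$ with the instantaneous $\bm V_{(j-1)\Delta}$. By Parseval's identity for the Mellin transform one has $\|\widehat f_{\Delta,\bm k}\|_{\bm x^{\ushort{\bm 1}}}^2 = (4\pi^2)^{-1}\int_{[-\bm k,\bm k]} |\widehat{\sM}_{\bm 1}(\bm t)/\sM_{\bm 1}[g](\bm t)|^2 d\bm t$, and the estimators are nested spectral cut-offs, so $\widehat f_{\Delta,\bm k}$ is the $\IL^2(\IRp^2,\bm x^{\ushort{\bm 1}})$-projection of $\widehat f_{\Delta,\bm k'}$ whenever $\bm k\leq \bm k'$ componentwise. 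Starting from the defining inequality of $\widehat{\bm k}$, pivoting through $\bm k\vee\widehat{\bm k}$ via Pythagoras, and inserting $\pm f_{\bm X,\bm k\vee\widehat{\bm k}}$ and $\pm f_{\bm V,\bm k\vee\widehat{\bm k}}$ (where $f_{\bm X}$ denotes the density of $\bm X_1$) yields, after standard manipulations, the schematic bound
\[\|\widehat f_{\Delta,\widehat{\bm k}}-f_{\bm V}\|_{\bm x^{\ushort{\bm 1}}}^2 \leq C\bigl(\|f_{\bm V,\bm k}-f_{\bm V}\|_{\bm x^{\ushort{\bm 1}}}^2+\mathrm{pen}(\bm k)\bigr) + C\|f_{\bm X,\bm k}-f_{\bm V,\bm k}\|_{\bm x^{\ushort{\bm 1}}}^2 + R_n,\]
with stochastic remainder $R_n=\sup_{\bm k\in\mathcal K_n}\{\|\widehat f_{\Delta,\bm k}-f_{\bm X,\bm k}\|_{\bm x^{\ushort{\bm 1}}}^2 - p(\bm k)\}_+$ for a suitable $p(\bm k)$ calibrated to be absorbed by $\mathrm{pen}(\bm k)$.

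To control $\E_{f_{\bm Y}}^n(R_n)$, I would apply Berbee's coupling lemma on blocks of length $q$ to replace $(\bm Y_j)_{j\in\nset{n}}$ by an i.i.d. block sequence at a coupling cost of order $n\beta_{\bm V}(q\Delta)/q$, and then apply a Talagrand-type concentration to the coupled blocks viewed as inputs to the quadratic functional $\bm k\mapsto\|\widehat f_{\Delta,\bm k}-f_{\bm X,\bm k}\|_{\bm x^{\ushort{\bm 1}}}^2$. The super-smooth form $|\sM_{\bm 1}[g](\bm t)|^{-2}\asymp\cosh^2(\pi t_1)\cosh^2(\pi t_2)$ forces the dominant variance term to scale like $k_1k_2\exp(\pi(k_1+k_2))/n$, which matches the shape of $\mathrm{pen}$ in \eqref{eq:pco:sv}, while integrability of $\beta_{\bm V}$ from $(\bm{\mathrm A_2})$ combined with the choice $q\sim\log n$ produces the $\log^2(n)/(n\Delta)$ remainder. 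The deterministic bias $\|f_{\bm X,\bm k}-f_{\bm V,\bm k}\|_{\bm x^{\ushort{\bm 1}}}^2$ is handled by observing that $\sM_{\bm 1}[f_{\bm X}](\bm t)-\sM_{\bm 1}[f_{\bm V}](\bm t)=\E[\bm X_1^{\ushort{i\bm t}}-\bm V_0^{\ushort{i\bm t}}]$ and using $|e^{iu}-e^{iv}|\leq|u-v|$ together with $(\bm{\mathrm A_3})$ to obtain a pointwise bound of order $\sqrt{\Delta}\,(|t_1|+|t_2|)$; squaring and integrating over $[-\bm k,\bm k]\subseteq[-\log n,\log n]^2$ then gives $\mathfrak c\Delta k_1^3k_2^3\leq\mathfrak c\Delta\log^6(n)$.

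Collecting these estimates yields the claimed oracle inequality once $\chi_0$ is chosen large enough that $\mathrm{pen}$ absorbs the fluctuation term; balancing $\Delta\log^6 n$ against $\log^2(n)/(n\Delta)$ then leads to $\Delta_n=(n^{1/2}\log^2 n)^{-1}$ and the announced $\log^4(n)/n$ rate. The main technical obstacle will be the calibration of $\chi_0$: the fluctuation of $\widehat f_{\Delta,\bm k}-f_{\bm X,\bm k}$ is amplified by the exponential factor $\exp(\pi(k_1+k_2))$ inherited from the super-smooth noise, so the Bernstein/Talagrand tail for the coupled blocks has to be combined with a union bound over the polynomially many candidates in $\mathcal K_n$ in such a way that the extra $k_1k_2$ factor in $\mathrm{pen}$ is exactly what is needed to absorb the resulting logarithmic loss; it is precisely this amplification, together with the additional dependence from the block coupling, that forces the penalty to overestimate the variance compared with the i.i.d. setting of \cite{Brenner-Miguel2021}.
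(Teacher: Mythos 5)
Your overall architecture (PCO decomposition, bias term $\|f_{\bm X,\bm k}-f_{\bm V,\bm k}\|_{\bm x^{\ushort{\bm 1}}}^2\leq \mathfrak c\Delta k_1^3k_2^3$ via $(\bm{\mathrm A_3})$, absorption of the fluctuation by a penalty inflated by $k_1k_2$, and the balancing $\Delta_n=(n^{1/2}\log^2 n)^{-1}$) matches the paper, which in fact obtains the theorem by reducing to the general stationary-data result of Theorem \ref{thm:pco:2} and then recycling the bias and variance bounds from the proof of Theorem \ref{pro:volatility}. The genuine problem is your treatment of the dependence in the concentration step. You propose Berbee coupling on blocks of length $q\sim\log n$, which incurs a coupling cost of order $(n/q)\,\beta_{\bm V}(q\Delta)$. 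Assumption $(\bm{\mathrm A_2})$ only guarantees $\int_{\IRp}\beta_{\bm V}(s)\,ds<\infty$; it gives no decay rate for $\beta_{\bm V}$, so $\beta_{\bm V}(q\Delta)$ with $q\Delta\sim n^{-1/2}\log^{-1}(n)\to 0$ is not even small, let alone small enough to make $(n/q)\beta_{\bm V}(q\Delta)$ negligible. Under the stated hypotheses the coupling cost is uncontrollable, so this route would require strengthening $(\bm{\mathrm A_2})$ to a quantitative (e.g.\ geometric) mixing rate.

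The paper avoids this entirely by exploiting the product structure $\bm Y_j=\bm X_j\bm U_j$: in the proof of Theorem \ref{thm:pco:2} the empirical process is split as $\bar\nu_h=\bar\nu_{h,in}+\bar\nu_{h,de}$, where $\bar\nu_{h,in}$ collects the fluctuations of the noise $(\bm U_j)$ \emph{conditionally on} $\sigma(\bm X_i, i\geq 0)$. Conditionally, these summands are independent (the $\bm U_j$ are i.i.d.\ and independent of the volatility path), so Talagrand's inequality (Lemma \ref{tal:re}) applies with no coupling at all; the choice $\varepsilon\propto k_1k_2$ there is what produces the extra $k_1k_2$ factor in the penalty. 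The dependent part $\bar\nu_{h,de}$ is then bounded deterministically by Cauchy--Schwarz in terms of $\int_{[-\bm K_n,\bm K_n]}\Var_f^n(\widehat\sM_{\bm X}(\bm t))\,d\bm t$, which is controlled by the Viennet-type covariance inequality (Lemma \ref{lem:beta_mix_var_bound}) and needs only $\sum_{k\geq 1}\beta_{\bm V}(\Delta(k-1))\leq \Delta^{-1}\int_{\IRp}\beta_{\bm V}(s)\,ds$, i.e.\ exactly the integrability in $(\bm{\mathrm A_2})$; this is the source of the $C(\beta_{\bm V})\log^2(n)/(n\Delta)$ term. You should replace the Berbee/blocking step by this conditional decomposition; the rest of your argument then goes through as sketched.
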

We end this section by giving a short discussion for which values of $\bm c$ the stated Theorem \ref{pro:volatility} and \ref{thm:pco:sv} can be generalised.
\begin{rem}
	For values of $\bm c \neq \bm 1$ additional assumptions on the moments of $\bm X$ and $\bm U$ are needed, compare Proposition \ref{pr:consis} and Theorem \ref{thm:pco:2}. Since in the stochastic volatility model the distribution of $\bm U_1$ is known to follow a $\chi^2$-distribution in each direction, we can deduce restrictions on $\bm c$ to ensure that the estimator is well-defined. Then from $\E_g(\bm U_1^{\ushort{2({\bm c}-\bm 1})})< \infty$, we deduce $\bm c> \bm{3/4}$ in order to generalise the result of Theorem \ref{pro:volatility}. This excludes the case of an unweighted $\IL^2$ risk which corresponds to the case of $\bm c=(1/2, 1/2)^T$. To generalise Theorem \ref{thm:pco:sv}, we need that $\E(\bm U^{4({\bm c}-\bm 1)})< \infty$, leading to $\bm c > \bm{7/8}$.
\end{rem}
\section{Examples for volatility processes and numerical studies}
The following section is separated into two parts. In the first a collection of examples of volatility processes, fulfilling ($\bm {\mathrm A_{0}}$)-($\bm{\mathrm A_3}$), is given with a study of the upcoming bias terms and the rates of the fully data-driven anisotropic estimators $\widehat f_{\Delta, \widehat{\bm k}}$. In the second part, we will briefly illustrate the expected behaviour of the proposed estimator via a Monte-Carlo simulation study.
\subsection{Collection of volatility processes}
\paragraph*{Exponential of a bivariate Ornstein-Uhlenbeck process} Let $(\bm Z_t)_{t\geq 0}$ be the stationary solution of the stochastic differential equation
\begin{align}\label{eq:OUP}
d\bm Z_t = \begin{pmatrix} -9 & 1 \\ 0 & -7 \end{pmatrix} \bm Z_tdt + \begin{pmatrix} 3 & 1 \\ 0 & 2 \end{pmatrix} d\widetilde{\bm W}_t,
\end{align}
where $(\widetilde{\bm W}_t)_{t\geq 0}$ is a standard Brownian motion.
In this situation, the invariant density $f_{\bm Z}$ of the process $(\bm Z_t)_{t\geq 0}$ is given by	$f_{\bm Z}\sim \mathrm{N}_{(\bm 0, \bm \Sigma)}$ where $$\bm \Sigma= \frac{1}{7} \begin{pmatrix} 4 & 1 \\ 1& 2 \end{pmatrix}$$ 
and the process $(\bm Z_t)_{t\geq 0}$ is $\beta$-mixing with exponential decay, compare \cite{Schmisser2013a} Section 5.2 Examples.
Then, $\bm V_t:= (\exp(Z_{t,1}), \exp(Z_{t,2}))^T$ is stationary, $\beta$-mixing with density $f_{\bm V}$ given by
$$f_{\bm V}(\bm x)= \frac{\sqrt{7}}{2\pi x_1 x_2} \exp\left(-\log^2(x_1)+\log(x_1)\log(x_2)-2\log^2(x_2)\right), \quad \bm x \in \IRp^2.$$
 In other words, $\bm V_0$ follows a bivariate log normal distribution. Further, exploiting Proposition \ref{prop:ass:3}, we have that ($\bm{\mathrm A_0}$)-($\bm{\mathrm{A}_3}$) are fulfilled. By \cite{Brenner-Miguel2021}, the Mellin transform of $f_{\bm V}$ is then given by
$$\mathcal M_{\bm 1}[f_{\bm V}](\bm t)= \exp\left(-\frac{1}{2} \bm t^T \bm \Sigma \bm t\right), \quad \bm t \in \IR^2.$$
From this we can deduce that $\|f_{\bm V}-f_{\bm V , \bm k}\|_{\bm x^{\ushort{\bm 1}}}^2 \leq L(e^{-\frac{2}{7}k_1^{2}}+e^{-\frac{1}{7} k_2^2})$ for some numerical constant $L>0$. Then, a direct calculus and Theorem \ref{thm:pco:sv} implies for this particular case of $(\bm V_t)_{t\geq 0}$ 
$$\E_{f_{\bm Y}}^n(\|\widehat f_{\Delta_n,\widehat{\bm k}}-f_{\bm V}\|_{\bm x^{\ushort{\bm 1}}}^2) \leq C(\bm V, g) \frac{\log^4(n)}{n}.$$
\paragraph*{Bivariate Cox-Ingersoll-Ross process}
Let $\rho_1, \rho_2 \in \IN$ and set $\bm \rho = (\rho_1, \rho_2)^T$. Then we define the bivariate Cox-Ingersoll-Ross process with independent coordinates as the strictly stationary solution of 
\begin{align}\label{eq:CIR}
d\bm V_t = (2\bm \rho-\bm V_t )dt + \sqrt{2} \begin{pmatrix} \sqrt{V_{t,1}} & 0 \\ 0 & \sqrt{V_{t,2}} \end{pmatrix} d\widetilde{\bm W}_t,
\end{align}
in other words, each variate is a Cox-Ingersoll-Ross process. From the univariate case, we deduce that the process fulfills ($\bm {\mathrm A}_1$). To see ($\bm{\mathrm A_2}$), one exploits that we can construct an univariate, and thus also a bivariate, CIR process using the sums of the squared coordinates of a multivariate Ornstein-Uhlenbeck processes without drift, which is $\beta$-mixing with exponential decay. Thus ($\bm{\mathrm A_2}$) holds true. 
In this situation, the invariant density $f_{\bm V}$ is given by a Gamma distribution
$$ f_{\bm V}(x)= \frac{\bm x^{\ushort{\bm \rho-\bm 1}}}{\Gamma(\rho_1)\Gamma(\rho_2)} \exp(-x_1-x_2) \mathds 1_{\IRp^2}(\bm x), \quad \bm x\in \IRp^2.$$
Exploiting \cite{Gloter2000} Proposition 5.5, for $i\in \nset{2}$ we have $\E(\sup_{t\in [0,\Delta]} V_{t,i}^{-2}) \leq 2\E(V_{0,i}^{-2})<\infty$ for $\rho_1,\rho_2 \geq 3$, which together with Proposition \ref{prop:ass:3}, implies ($\bm{\mathrm A_3}).$ The Mellin transform of $f_{\bm V}$ is given by 
$$ \sM_{\bm 1}[f_{\bm V}](\bm t)= \frac{\Gamma(\rho_1+it_1)\Gamma(\rho_2+it_2)}{\Gamma(\rho_1)\Gamma(\rho_2)}, \quad \bm t\in \IR^2.$$
Thus applying the Stirling inequality for Gamma functions, compare \cite{AndrewsAskeyRoy1999} Corollary 1.4.4., $\|f_{\bm V}-f_{\bm V , \bm k}\|_{\bm x^{\ushort{\bm 1}}}^2 \leq L(\rho_1, \rho_2)(k_1^{2(\rho_1+1)}e^{-k_1\pi}+k_2^{2(\rho_2+1)}e^{-k_2\pi})$ for a constant $L(\rho_1,\rho_2)$ dependent on $\rho_1, \rho_2$. Then, we derive 
$$\E_{f_{\bm Y}}^n(\|\widehat f_{\Delta_n,\widehat{\bm k}}-f_{\bm V}\|_{\bm x^{\ushort{\bm 1}}}^2) \leq C(\bm V, g) \frac{\log^{2(\rho_1\vee\rho_2+1)}(n)}{n}.$$
\paragraph*{Exponential of a bivariate Cox-Ingersoll-Ross process}
We consider $\bm V_t:=(\exp(Z_{t,1}), \exp(Z_{t,2}))^T$ where $(\bm Z_t)_{t\geq 0}$ is an bivariate Cox-Ingersoll Ross process with $\bm \rho\in \IN^2$. Instantly, the properties ($\bm{\mathrm A_1}$) and ($\bm{\mathrm A_2})$ are given. The invariant density $f_{\bm V}$ is here given by the density of a Log-Gamma distribution that is,
$$f_{\bm V}(\bm x)=\frac{\log^{\rho_1-1}(x_1)\log^{\rho_2-1}(x_2)}{\Gamma(\rho_1)\Gamma(\rho_2)} \bm x^{\ushort{-\bm 2}} \mathds 1_{(1,\infty)^2}(\bm x), \quad \bm x \in \IRp^2.$$
For $(\bm{\mathrm A_3})$ we again use Proposition \ref{prop:ass:3}. The corresponding Mellin transform is then given by
$$ \sM_{\bm 1}[f_{\bm V}](\bm t)= (\bm 1-i\bm t)^{\ushort{-\bm \rho}}, \quad \bm t\in \IR^2,$$
with $\|f_{\bm V}-f_{\bm V , \bm k}\|_{\bm x^{\ushort{\bm 1}}}^2 \leq L(\rho_1, \rho_2)(k_1^{-2\rho_1+1}+k_2^{-2\rho_2+1})$ for $\bm k\in \IRp^2$. Further, we have
$$\E_{f_{\bm Y}}^n(\|\widehat f_{\Delta_n,\widehat{\bm k}}-f_{\bm V}\|_{\bm x^{\ushort{\bm 1}}}^2) \leq C(\bm V, g) \log(n)^{-2(\rho_1 \wedge \rho_2)+1}.$$
\paragraph*{Comment}
Based on Theorem \ref{thm:pco:sv}, it was clear that the rate of the fully data-driven estimator cannot achieve a rate better than $\log^4(n)/\sqrt{n}$. In the case of the exponential of an Ornstein-Uhlenbeck process, we have seen that the fast decay of $\sM_{\bm 1}[f_{\bm V}]$ implies that the estimator achieves this rate, while for the CIR processes, a slight disgression of the rate is observed. \\
In the case of exponential of a CIR process, the rate is of logarithmic decay which is typical for super smooth errors and densities with polynomial decaying Mellin transform. \\
\label{sec_numerical}
\subsection{Numerical simulation}
We illustrate the performance of the estimator $\widehat f_{\Delta, \widehat{\bm k}}$, defined in \eqref{def:est} and \eqref{eq:pco:sv}, using a Monte-Carlo simulation. To do so, we sample for $\Delta= 0.01$ fixed and varying sample sizes $n\in \IN$ from an exponential Ornstein-Uhlenbeck process, \eqref{eq:OUP}, $(\bm V_t)_{t\in [0,\Delta n]}$ and calculate the scaled integrated volatilities $(\overline{\bm V}_{j})_{j\in \nset{n}}$. Here, the sampling from the process and the calculation of the upcoming integral are solved by numerical discretisation. \\
In Figure \ref{figure:1}, we compare the estimator $\widehat f_{\Delta, \widehat{\bm k}}$ in the volatility model with the estimator $\widehat f_{\widehat{\bm k}}$ of \cite{Brenner-Miguel2021} based on the direct observation $(\overline {\bm V}_j)_{j\in \nset{n}}$, that is without noise. 
\begin{minipage}{\textwidth}
\begin{minipage}{0.45\textwidth}
	\begin{minipage}{\textwidth}
			\includegraphics[width=0.44\textwidth,height=30mm]{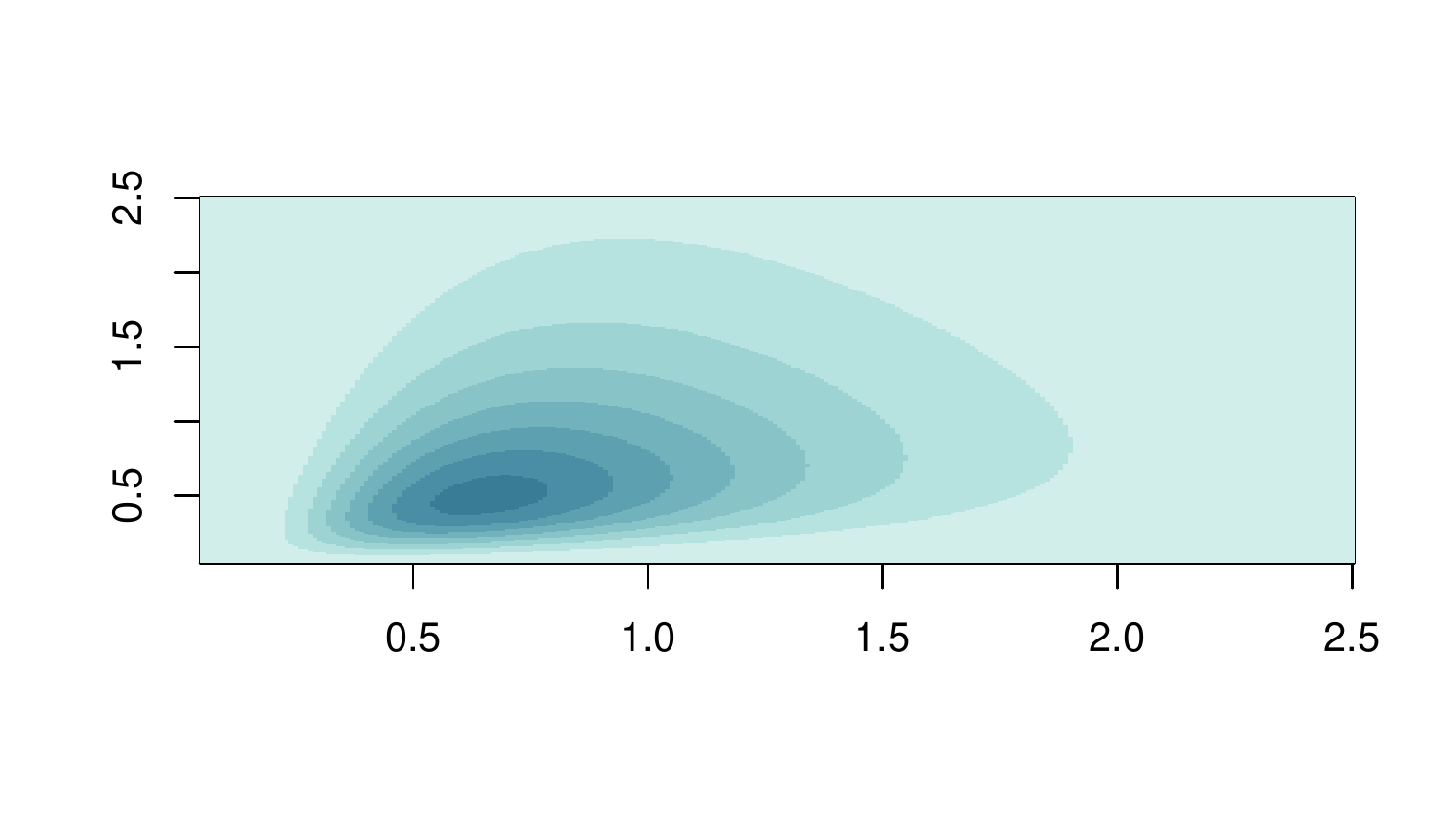}
			\includegraphics[width=0.44\textwidth,height=30mm]{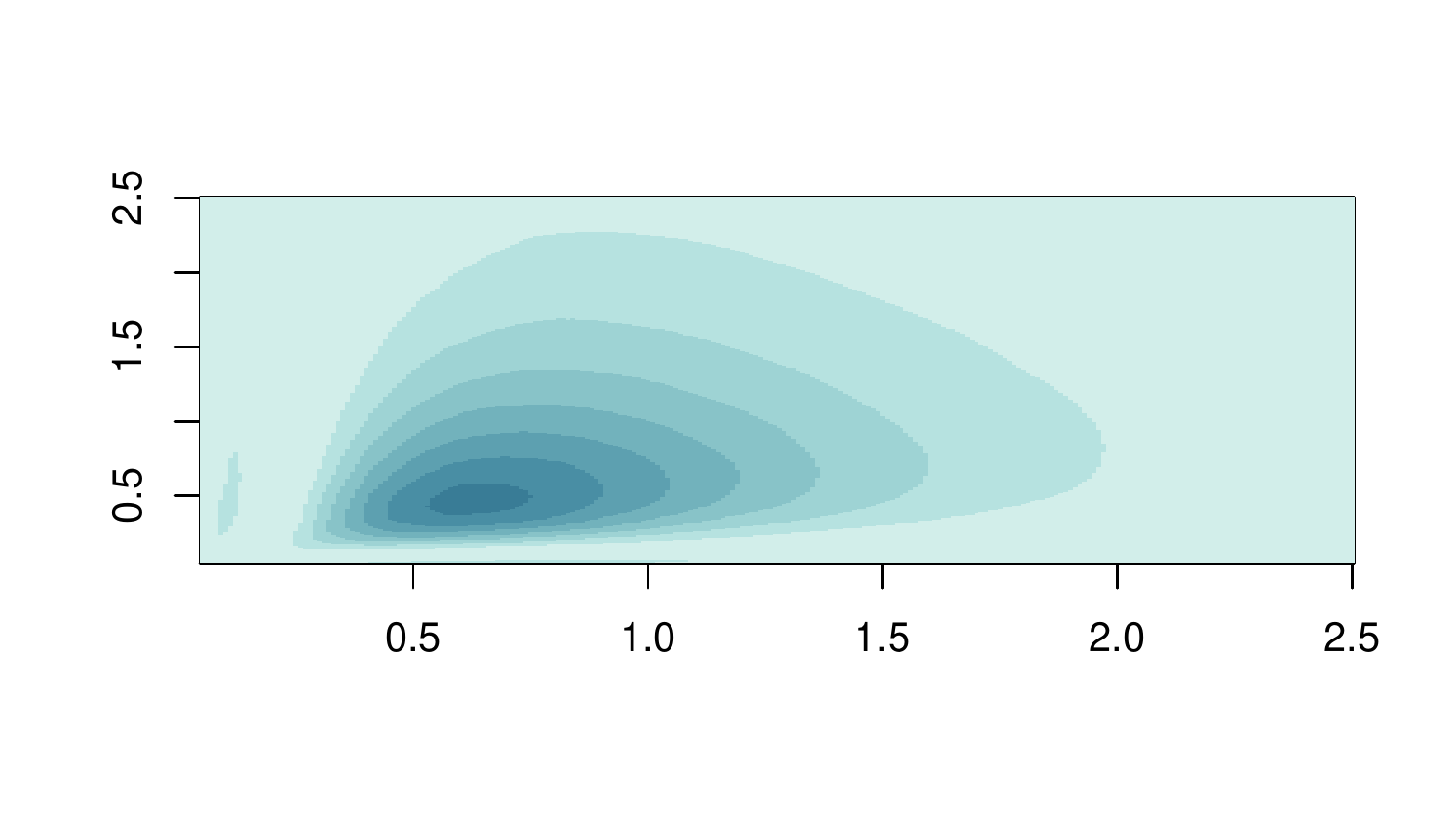}
		\end{minipage}
		\begin{minipage}{0.45\textwidth}
				\includegraphics[width=\textwidth,height=30mm]{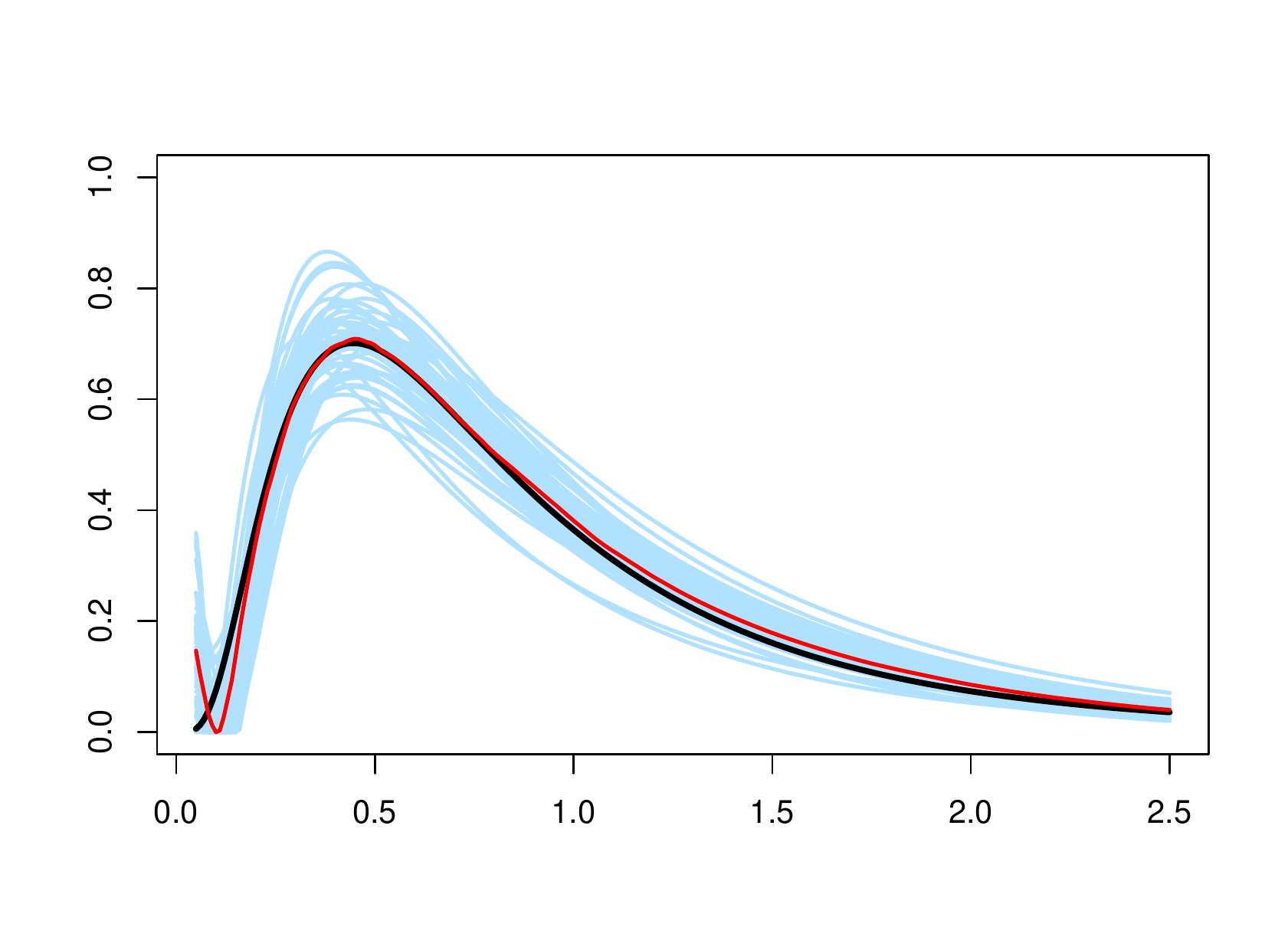}
			\end{minipage}
			\begin{minipage}{0.45\textwidth}
				\includegraphics[width=\textwidth,height=30mm]{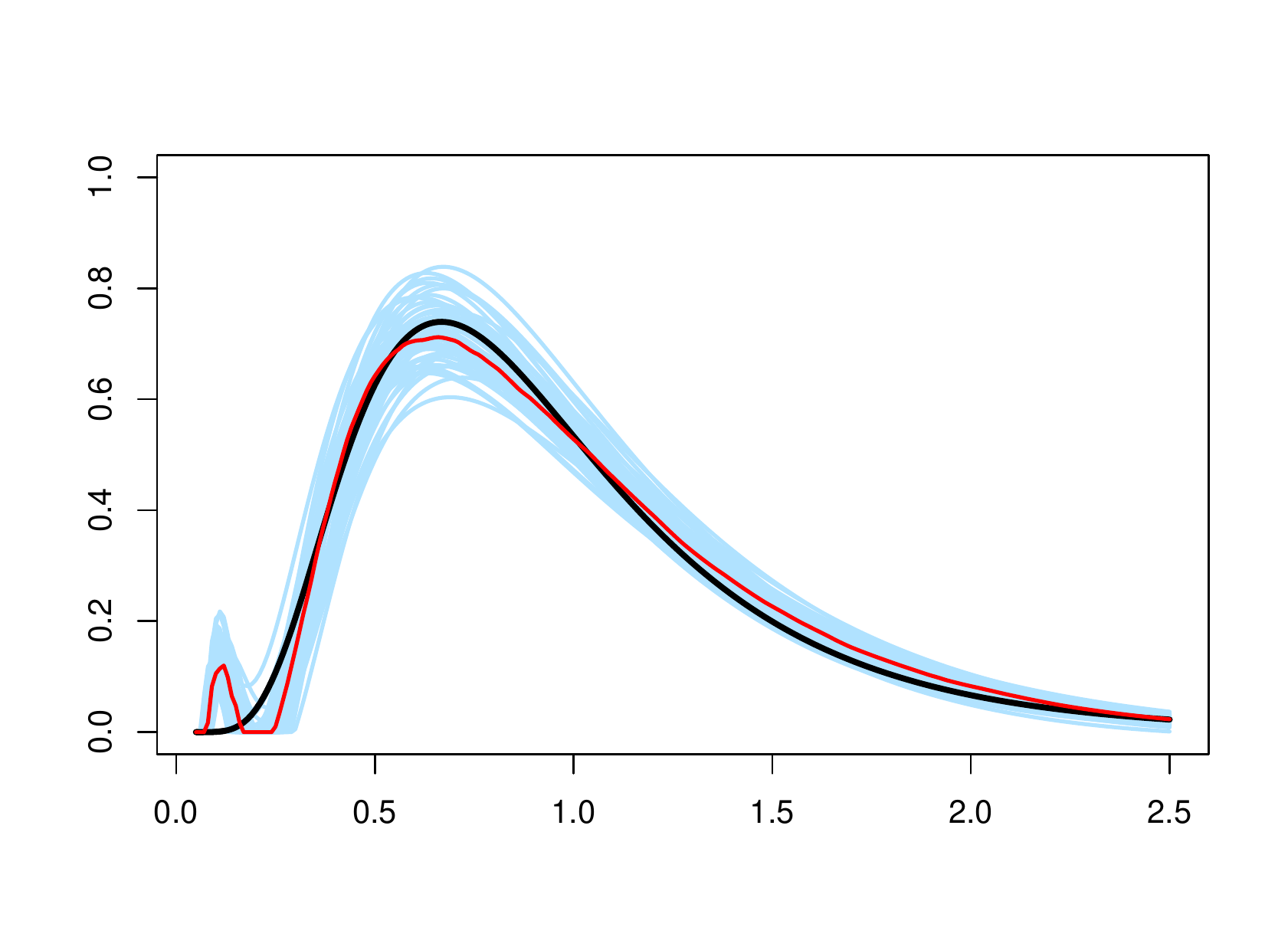}
		\end{minipage}
\end{minipage}\hspace*{0.5cm}
\begin{minipage}{0.45\textwidth}
		\begin{minipage}{\textwidth}
		\includegraphics[width=0.44\textwidth,height=30mm]{ln_vol_true.pdf}
		\includegraphics[width=0.44\textwidth,height=30mm]{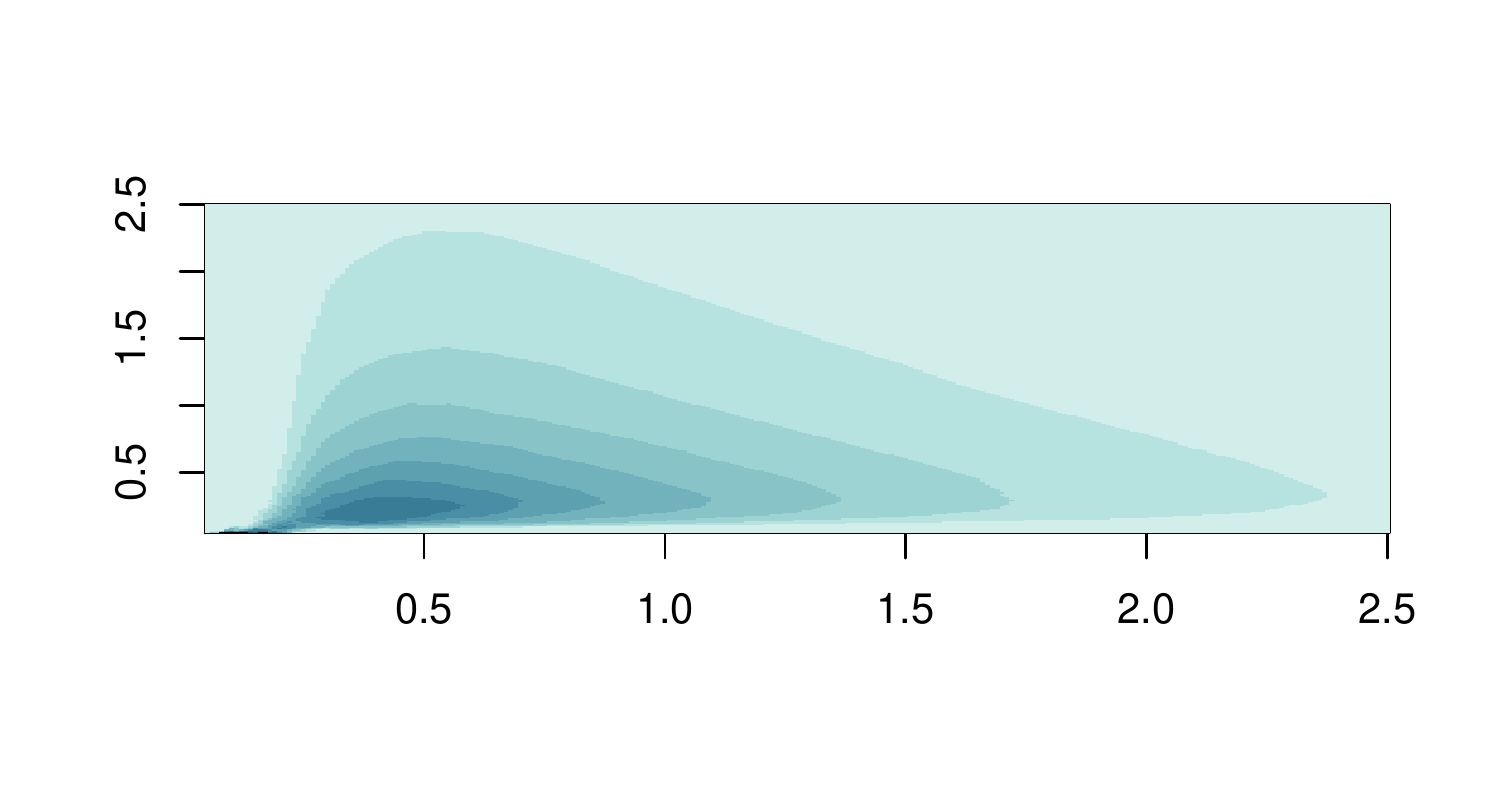}
	\end{minipage}
	\begin{minipage}{0.45\textwidth}
		\includegraphics[width=\textwidth,height=30mm]{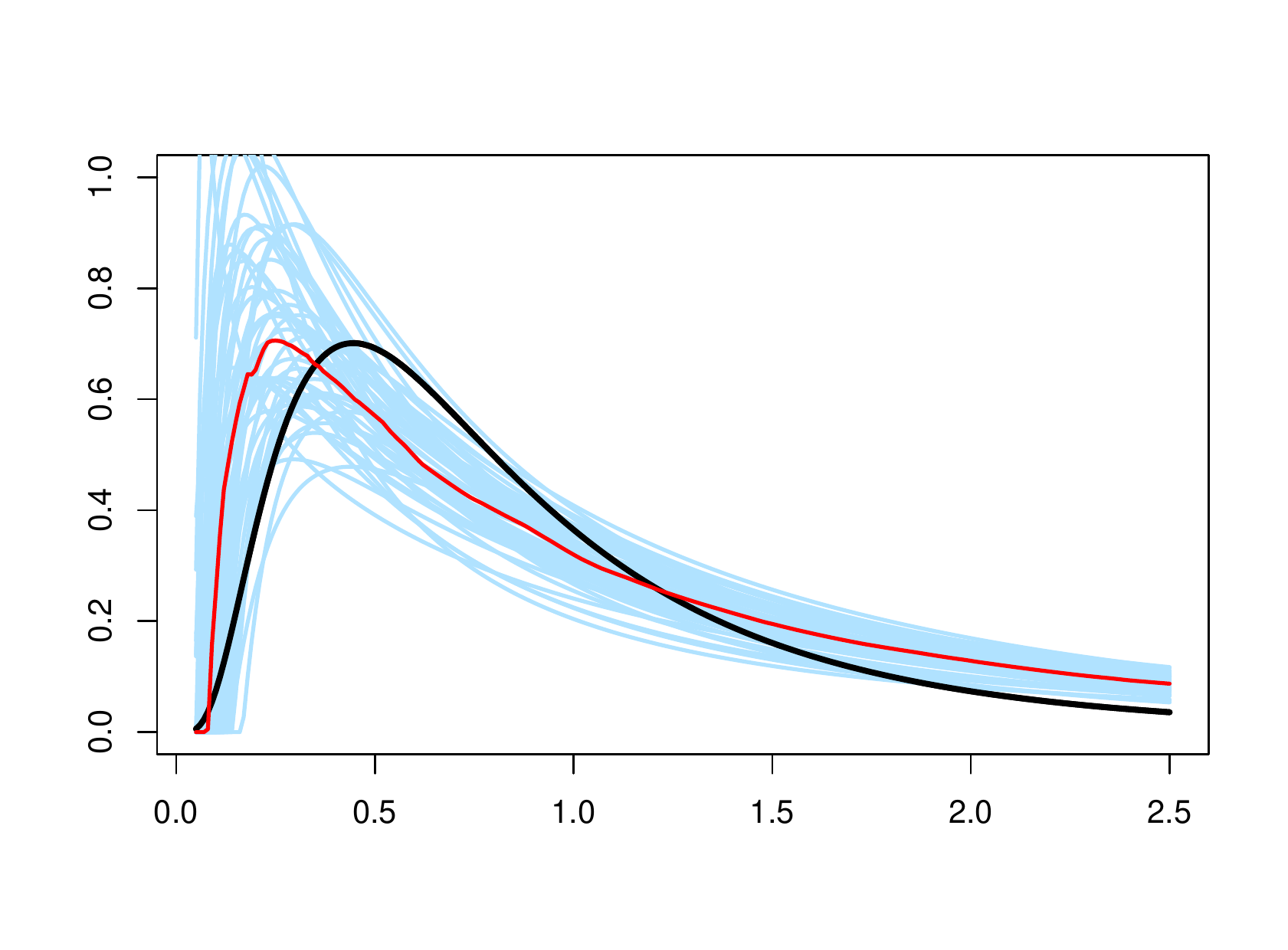}
	\end{minipage}
	\begin{minipage}{0.45\textwidth}
		\includegraphics[width=\textwidth,height=30mm]{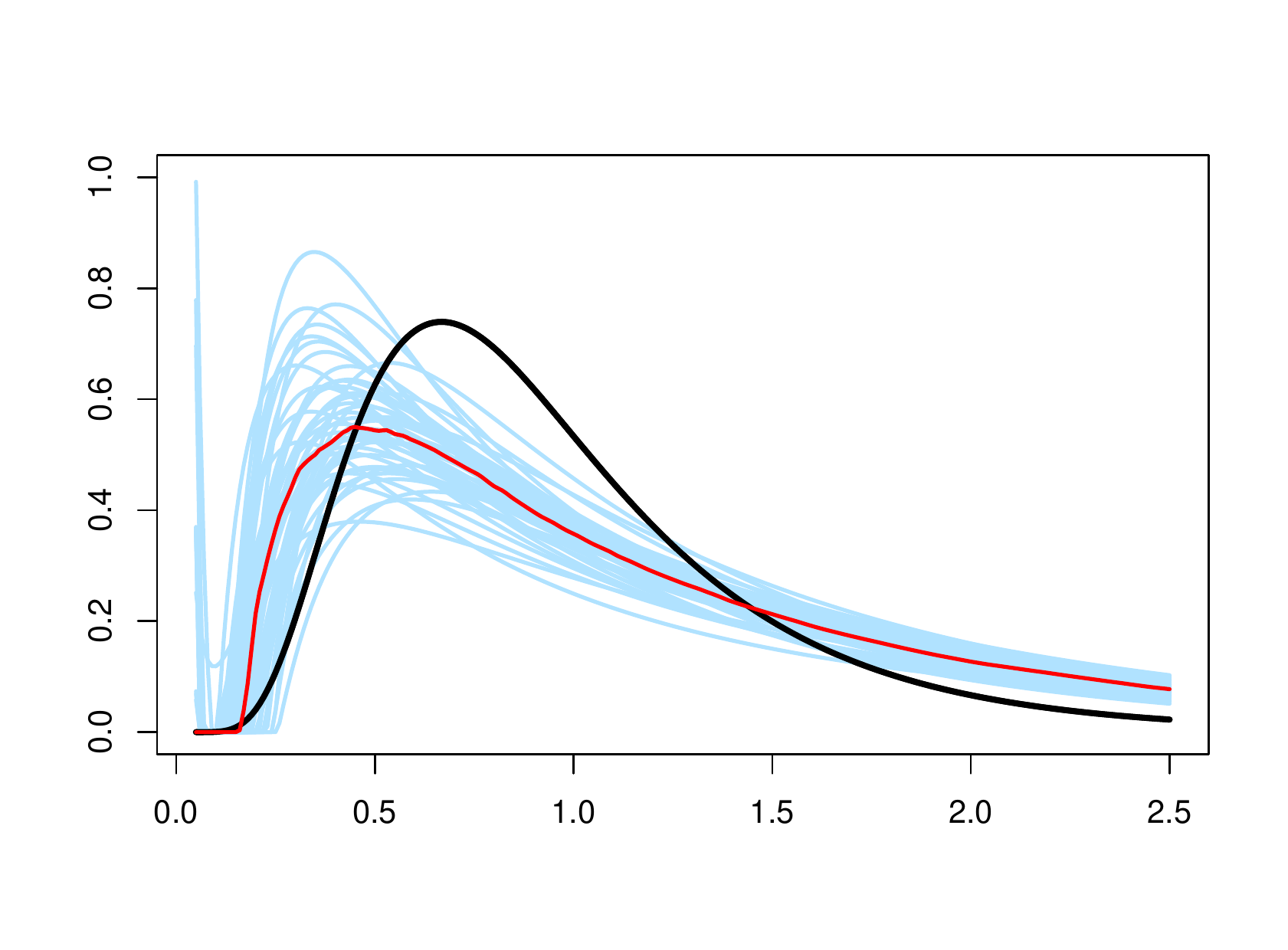}
	\end{minipage}
\end{minipage}
	\captionof{figure}{\label{figure:1} Estimators $\widehat f_{\widehat {\bm k}}$ (left) and $\widehat f_{\widehat{\bm k}, \Delta}$ (right) depicted for 
	50  Monte-Carlo simulations with  $n=5000$ based on $(\overline{\bm V}_j)_{j\in \nset{n}}$, respectively based on $(\bm Y_j)_{j\in \nset{n}}$. Top plots: the true density (left) and the pointwise median of the estimators (right). Bottom plots:  sections for $x=0.54$ (right) and $y=0.54$ (left) with  true density $f$ (black curve) and pointwise empirical median (red curve) of the 50 estimates.}
\end{minipage}\\
In Figure \ref{figure:1}, one sees the impact of the noise on the performance of the estimator. Focusing on the pointwise median, the remaining bias is clearly observable which is consistent with the theory. Figure \ref{figure:2} illustrates the improvement of the behavior of the estimator for increasing sample size.
\begin{minipage}{\textwidth}
	\begin{minipage}{0.45\textwidth}
		\begin{minipage}{\textwidth}
			\includegraphics[width=0.44\textwidth,height=30mm]{ln_vol_true.pdf}
			\includegraphics[width=0.44\textwidth,height=30mm]{ln_vol_5000.pdf}
		\end{minipage}
		\begin{minipage}{0.45\textwidth}
			\includegraphics[width=\textwidth,height=30mm]{ln_5000_vol_x.pdf}
		\end{minipage}
		\begin{minipage}{0.45\textwidth}
			\includegraphics[width=\textwidth,height=30mm]{ln_5000_vol_y.pdf}
		\end{minipage}
	\end{minipage}\hspace*{0.5cm}
	\begin{minipage}{0.45\textwidth}
		\begin{minipage}{\textwidth}
			\includegraphics[width=0.44\textwidth,height=30mm]{ln_vol_true.pdf}
			\includegraphics[width=0.44\textwidth,height=30mm]{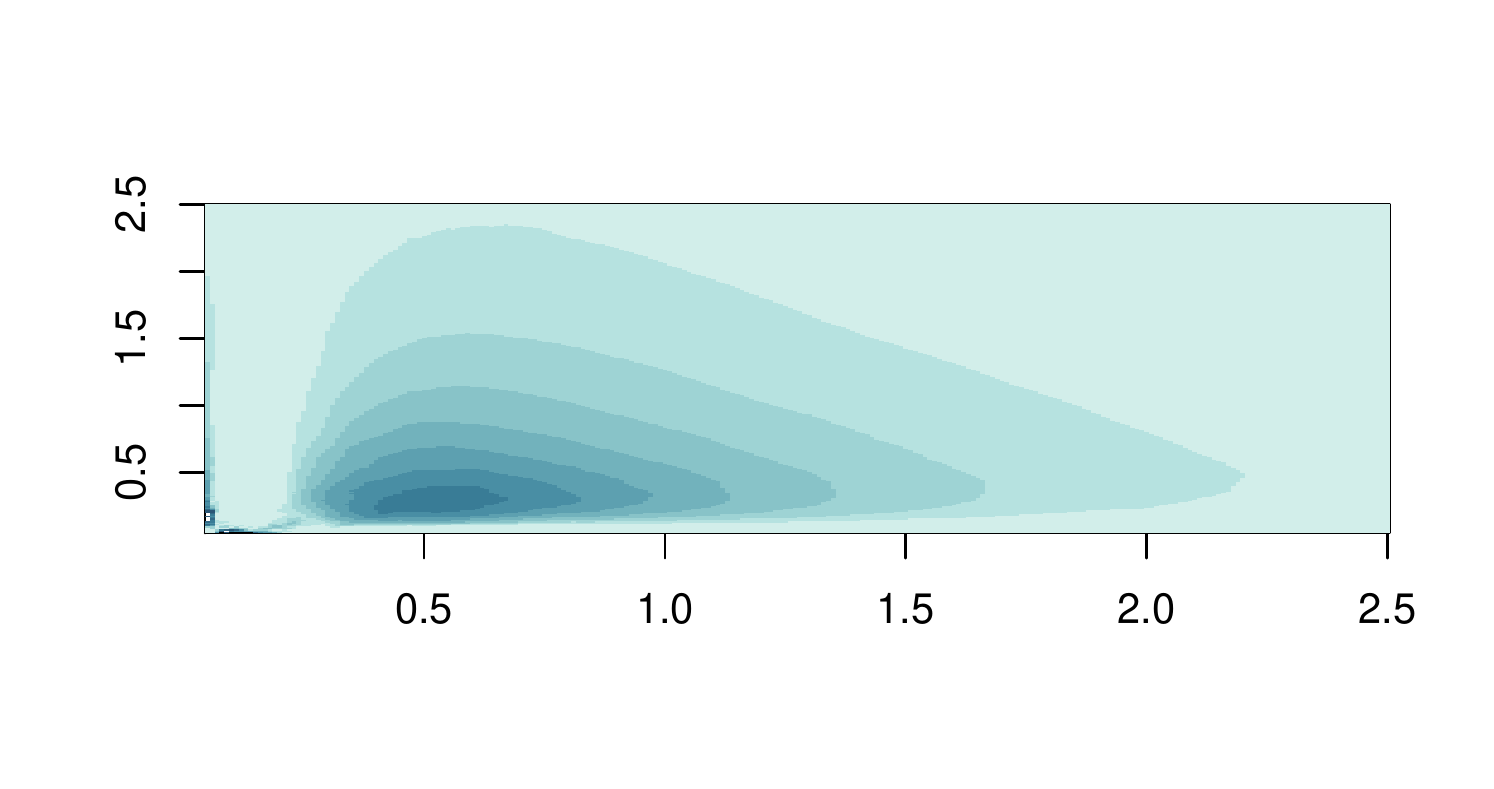}
		\end{minipage}
		\begin{minipage}{0.45\textwidth}
			\includegraphics[width=\textwidth,height=30mm]{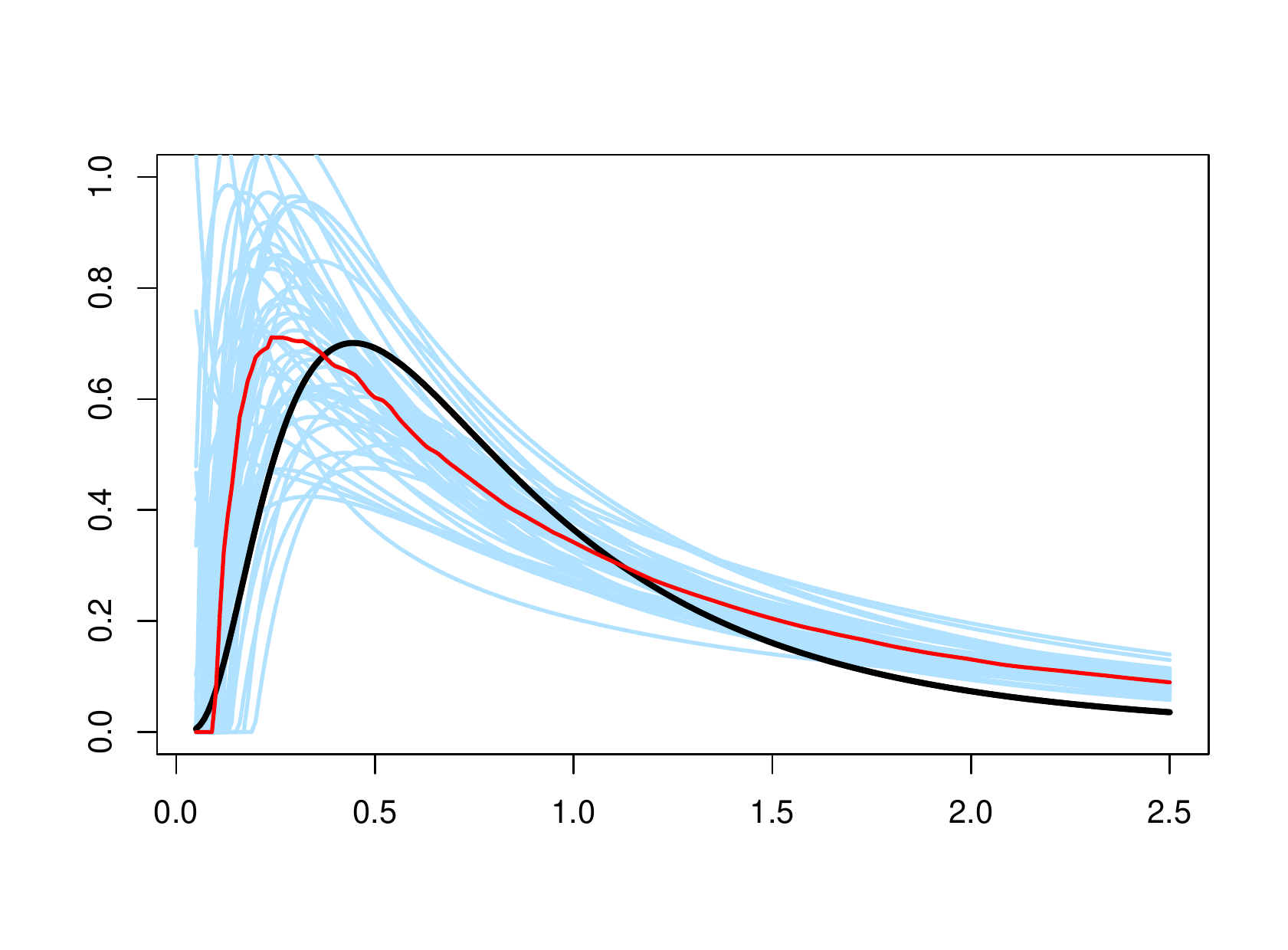}
		\end{minipage}
		\begin{minipage}{0.45\textwidth}
			\includegraphics[width=\textwidth,height=30mm]{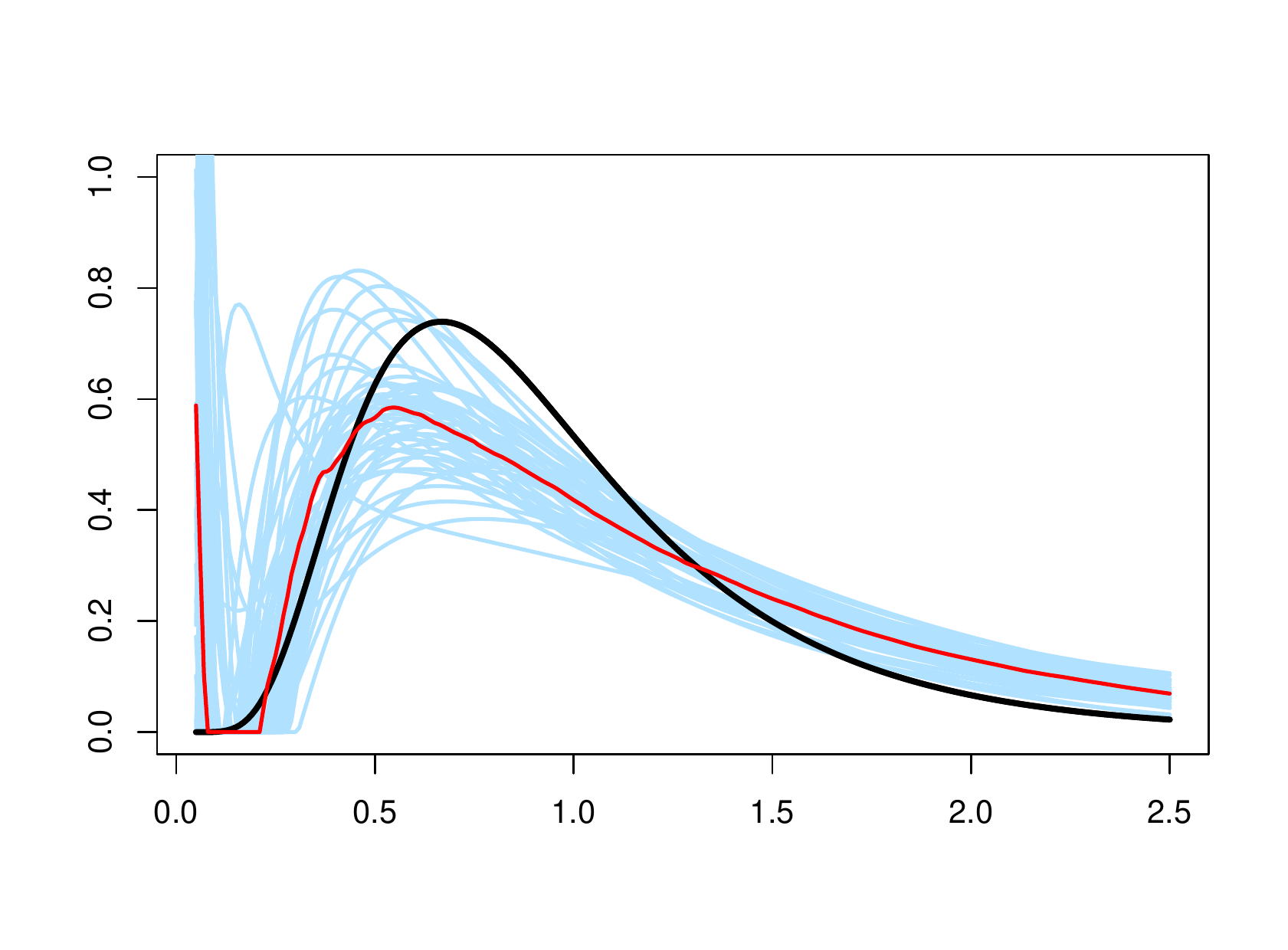}
		\end{minipage}
	\end{minipage}
	\captionof{figure}{\label{figure:2} Estimator $\widehat f_{\widehat{\bm k}, \Delta}$ depicted for 
		50  Monte-Carlo simulations with   $n=5000$ (left) and $n=20000$ (right) based on $(\bm Y_j)_{j\in \nset{n}}$. Top plots: true density (left) and the pointwise median of the estimators (right). Bottom plots: sections for $x=0.54$ (right) and $y=0.54$ (left) with true density $f$ (black curve) and  pointwise empirical median (red curve) of the 50 estimates.}
\end{minipage}\\
\paragraph*{Comment}
The simulation study implies the reasonable behavior of the estimator. For increasing sample size the error of the estimator is decaying. Furthermore, it seems that the underlying dependence has a negligible effect on the rate compared to the super smooth error densities. This observation is consistent with the theoretical results of Theorem \ref{pro:volatility} and \ref{thm:pco:sv}. 
\section{Multiplicative measurement error model for stationary processes}\label{sec_stat_pro}
In the following section we consider the estimation of the density $f: \IR_+^2 \rightarrow \IR_+$ of a positive, bivariate random vector $\bm X=(X_1, X_2)^T$ based on a strictly stationary sample of $\bm X$ under multiplicative measurement errors, that is, we consider the observations 
\begin{align*}
\bm Y_i:= \begin{pmatrix} Y_{i,1} \\ Y_{i,2} \end{pmatrix} =   \begin{pmatrix} X_{i,1} U_{i,1} \\ X_{i,2} U_{i,2}\end{pmatrix}  = \bm X_i \bm U_i,\quad  i\in \nset{n},
\end{align*}
where $(\bm X_i)_{i\in \llbracket n \rrbracket}$ are sampled from a strictly stationary process with stationary density given by $f$ and $(\bm U_i)_{i\in \llbracket n \rrbracket}$ is an i.i.d. sequence drawn from the error density $g:\IR_+^2 \rightarrow \IR_+$. 
To do so, we will borrow ideas from \cite{Brenner-Miguel2021} and \cite{Brenner-MiguelPhandoidaen2022}. In comparison to \cite{Brenner-Miguel2021} and \cite{Brenner-MiguelPhandoidaen2022}, where smooth error densities has been considered, our main focus will lie on super smooth error densities.  Before building our estimator, let us briefly summarise main properties of the Mellin transform presented in \cite{Brenner-Miguel2021}.
\paragraph*{The Mellin transform} Let $\bm c\in \IRp^2$. For two functions $h_1,h_2\in \mathbb L^1(\IRp^2, \bm x^{\ushort{\bm c-\bm 1}})$ we define the multiplicative convolution $h_1*h_2$ of $h_1$ and $h_2$ by
\begin{align}
(h_1*h_2)(\bm y)=\int_{\IRp^2} h_1(\bm y/\bm x) h_2(\bm x) \bm x^{\ushort{-\bm 1}} d\bm x, \quad \bm y\in \mathbb R^2.
\end{align}
It can be shown,  $h_1*h_2$ is well-defined, $h_1*h_2=h_2*h_1$ and $h_1*h_2 \in \mathbb L^1(\IRp^2,\bm x^{\ushort{\bm c-\bm1}})$. A proof sketch of this property and the following results can be found in \cite{Brenner-Miguel2021}. If additionally $h_1\in \mathbb L^2(\IRp^2, \bm x^{\ushort{2\bm c-\bm 1}})$ then $h_1*h_2 \in \mathbb L^2(\IRp^2,\bm x^{\ushort{2\bm c-\bm 1}})$. One key property of the Mellin transform, which makes it so appealing for the use of multiplicative deconvolution, is the so-called convolution theorem, that is, for $h_1, h_2\in \mathbb L^1(\IRp^2,\bm x^{\ushort{\bm c-\bm 1}})$, 
\begin{align}
\mathcal M_{\bm c}[h_1*h_2](\bm t)=\mathcal M_c[h_1](\bm t) \mathcal M_c[h_2](\bm t), \quad \bm t\in \mathbb R^2.
\end{align}
By construction, the operator $\mathcal M_{\bm c}: \IL^2(\IRp^2,\bm x^{\ushort{2\bm c-\bm 1}}) \rightarrow \IL^2(\IR^2)$ is an isomorphism. Denoting by $\mathcal M_{\bm c}^{-1}: \IL^2(\IR^2) \rightarrow \mathcal \IL^2(\IRp^2,\bm x^{\ushort{2\bm c-\bm 1}})$ its inverse, we can state that if additional to $H\in \IL^2(\IR^2)$, $H\in \IL^1(\IR^2)$ holds true, then $\mathcal M_{\bm c}^{-1}[H]$ can be expressed explicitly by
\begin{align}\label{eq:Mel:inv}
\sM_{\bm c}^{-1}[H](\bm x)= \frac{1}{(2\pi)^2 } \int_{\IR^2} \bm x^{\ushort{-\bm c-i\bm t}} H(\bm t) d\bm t, \quad \text{ for any } \bm x\in \IRp^2.
\end{align} 
Furthermore, we can directly show that a Plancherel-type equation, respectively Parseval-type equation, is valid for the Mellin transform. For all $ h_1, h_2 \in \IL^2(\IRp^d,\bm x^{\ushort{2\bm c-\bm 1}})$ holds 
\begin{align}\label{eq:Mel:plan}
\hspace*{-1cm}\langle h_1, h_2 \rangle_{\bm x^{\ushort{2\bm c-\bm 1}}} = \frac{1}{4\pi^2} \langle \sM_{\bm c}[h_1], \sM_{\bm c}[h_2] \rangle_{\IR^2} 
\end{align} 
and thus $\| h_1\|_{\bm x^{\ushort{2\bm c-\bm 1}}}^2=(4\pi)^{-1} \|\sM_{\bm c}[h_1]\|_{\IR^2}^2$.
\paragraph*{Estimation strategy} Let $\bm c\in \IR^2$ and  $\IE_{f_{\bm Y}}(\bm Y_1^{\ushort{\bm c-\bm 1}})<\infty$ and $f\in \IL^2(\IRp^2, \bm x^{\ushort{2\bm c-\bm 1}})$. Then, we define for the spectral cut-off estimator $\widehat f_{\bm k}$, studied in \cite{Brenner-Miguel2021}, 
\begin{align}\label{eq:est:def}
\widehat f_{\bm k}(\bm x)= \frac{1}{4\pi^2} \int_{[-\bm k, \bm k]} \bm x^{\ushort{-\bm c-i\bm t}} \frac{\widehat{\mathcal M}_{\bm c}(\bm t)}{\sM_{\bm c}[g](\bm t)} d\bm t,\quad \text{with }\widehat{\sM}_{\bm c}(\bm t) := \frac{1}{n}\sum_{j\in \nset{n}} \bm Y_j^{\ushort{\bm c-\bm 1+i\bm t}}, \quad 
\end{align}
for $\bm x\in \IRp^2.$ To ensure that the estimator is well-defined, we assume that $g$ fulfills
\begin{align*}
\forall \bm t \in \IR^2: \sM_{\bm c}[g](\bm t)\neq 0 \text{ and } \forall \bm k \in \IRp^2: \int_{[-\bm k, \bm k]} |\sM_{\bm c}[g](\bm t)|^{-2} d\bm t <\infty \tag{\textbf{[G0]}}.
\end{align*}
\begin{rem}
	Assumption \textbf{[G0]} is not unusual in context of deconvolution problems, compare \cite{Meister2009}. Examples of multivariate density, which fullfils the assumption \textbf{[G0]} are presented in \cite{Brenner-Miguel2021}.
\end{rem}
The following proposition is a generalisation of the results in \cite{Brenner-Miguel2021} for strictly stationary data. Its proof is postponed to Appendix \ref{a:stat_pro}.

	\begin{enumerate}
		\item Technische Einführung der Mellin transformierter für Multivariate
	\end{enumerate}

\begin{prop}[Upper bound of the risk]\label{pr:consis}
	Let $f\in \IL^2(\IRp^d, \bm x^{\ushort{2\bm c-\bm 1}})$, $\mu_{\bm Y}:= \E_{f_{\bm Y}}(\bm Y_1^{\ushort{2\bm c- \bm 2}})< \infty$ and $g$ fulfill \textbf{[G0]}. Then, for any $\bm k\in \IRp^2$,
	\begin{align} \label{eq:consis}
	\hspace*{-0.25cm}\E_{f_Y}^n(\|f-\widehat f_{\bm k}\|_{\bm x^{\ushort{2\bm c-\bm 1}}}^2) \leq \|f-f_{\bm k}\|_{\bm x^{\ushort{2\bm c-\bm 1}}}^2+ \frac{\mu_{\bm Y} \Lambda_g(\bm k)}{n}+ \frac{1}{4\pi^2} \int_{[-\bm k, \bm k]} \Var_f^n(\widehat \sM_{\bm X}(\bm t))d\bm t
	\end{align}
	where $\Lambda_g(\bm k):= (4\pi^2)^{-1}\int_{[-\bm k, \bm k]} |\sM_{\bm c}[g](\bm t)|^{-2} d\bm t$ and $\widehat \sM_{\bm X}(\bm t):= n^{-1} \sum_{j\in \nset{n}} \bm X_{j}^{\ushort{\bm c-\bm 1+i\bm t}}$. 
\end{prop}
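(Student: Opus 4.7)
The strategy is the classical bias–variance decomposition in the $\IL^2(\IRp^2,\bm x^{\ushort{2\bm c-\bm 1}})$ norm, adapted to the Mellin-based spectral cut-off and to the fact that only $(\bm U_j)_{j\in \nset{n}}$ is i.i.d.\ while $(\bm X_j)_{j\in \nset{n}}$ is merely strictly stationary. First I would observe that, with $f_{\bm k}:=\sM_{\bm c}^{-1}[\sM_{\bm c}[f]\mathds 1_{[-\bm k,\bm k]}]$, the Plancherel identity \eqref{eq:Mel:plan} yields the orthogonal decomposition
\begin{equation*}
\|f-\widehat f_{\bm k}\|_{\bm x^{\ushort{2\bm c-\bm 1}}}^2 = \|f-f_{\bm k}\|_{\bm x^{\ushort{2\bm c-\bm 1}}}^2 + \|f_{\bm k}-\widehat f_{\bm k}\|_{\bm x^{\ushort{2\bm c-\bm 1}}}^2,
\end{equation*}
which separates the bias from the stochastic error. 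Applying Plancherel again to the second summand, and using the convolution theorem $\sM_{\bm c}[f_{\bm Y}]=\sM_{\bm c}[f]\sM_{\bm c}[g]$, I get
\begin{equation*}
\E_{f_{\bm Y}}^n\bigl(\|f_{\bm k}-\widehat f_{\bm k}\|_{\bm x^{\ushort{2\bm c-\bm 1}}}^2\bigr) = \frac{1}{4\pi^2}\int_{[-\bm k,\bm k]} \frac{\E_{f_{\bm Y}}^n\bigl|\widehat\sM_{\bm c}(\bm t)-\sM_{\bm c}[f_{\bm Y}](\bm t)\bigr|^2}{|\sM_{\bm c}[g](\bm t)|^2}\,d\bm t,
\end{equation*}
so everything reduces to controlling the complex variance of $\widehat\sM_{\bm c}(\bm t)$ pointwise in $\bm t$.

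The key step is the conditional variance decomposition with respect to $\sigma(\bm X_1,\ldots,\bm X_n)$. Since $(\bm U_j)$ is i.i.d.\ and independent of $(\bm X_j)$, writing $\bm Y_j^{\ushort{\bm c-\bm 1+i\bm t}} = \bm X_j^{\ushort{\bm c-\bm 1+i\bm t}}\bm U_j^{\ushort{\bm c-\bm 1+i\bm t}}$ gives
\begin{equation*}
\E\bigl[\widehat\sM_{\bm c}(\bm t)\mid \bm X\bigr] = \sM_{\bm c}[g](\bm t)\,\widehat\sM_{\bm X}(\bm t),
\end{equation*}
and conditionally on $\bm X$ the $n$ summands are independent, so
\begin{equation*}
\Var\bigl(\widehat\sM_{\bm c}(\bm t)\mid \bm X\bigr) = \frac{1}{n^2}\sum_{j\in \nset{n}} \bm X_j^{\ushort{2\bm c-\bm 2}}\,\Var_g\bigl(\bm U_1^{\ushort{\bm c-\bm 1+i\bm t}}\bigr) \leq \frac{1}{n^2}\sum_{j\in \nset{n}} \bm X_j^{\ushort{2\bm c-\bm 2}}\,\E_g\bigl(\bm U_1^{\ushort{2\bm c-\bm 2}}\bigr).
\end{equation*}
Taking total expectation, using independence $\bm X_1\perp \bm U_1$ and stationarity, we obtain $\E[\Var(\widehat\sM_{\bm c}(\bm t)\mid \bm X)] \leq n^{-1}\E(\bm Y_1^{\ushort{2\bm c-\bm 2}}) = n^{-1}\mu_{\bm Y}$, which is the crucial bound because it is uniform in $\bm t$.

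Combining with the law of total variance, $\E|\widehat\sM_{\bm c}(\bm t)-\sM_{\bm c}[f_{\bm Y}](\bm t)|^2 \leq n^{-1}\mu_{\bm Y} + |\sM_{\bm c}[g](\bm t)|^2 \Var_f^n(\widehat\sM_{\bm X}(\bm t))$. Dividing by $|\sM_{\bm c}[g](\bm t)|^2$ and integrating over $[-\bm k,\bm k]$, the first piece yields $\mu_{\bm Y}\Lambda_g(\bm k)/n$ while in the second piece the factor $|\sM_{\bm c}[g](\bm t)|^2$ cancels, producing exactly $(4\pi^2)^{-1}\int_{[-\bm k,\bm k]}\Var_f^n(\widehat\sM_{\bm X}(\bm t))d\bm t$. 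The main conceptual obstacle is the conditioning trick: once we peel off the i.i.d.\ noise via conditioning on $\bm X$, the dependence of the $\bm X_j$'s is quarantined inside $\Var_f^n(\widehat\sM_{\bm X}(\bm t))$, which is then handled in later sections (via the $\beta$-mixing assumption) rather than at this step.
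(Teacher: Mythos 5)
Your proposal is correct and follows essentially the same route as the paper: the Plancherel orthogonal bias–variance split, followed by the decomposition of $\Var(\widehat\sM_{\bm c}(\bm t))$ into the part coming from the noise and the part $|\sM_{\bm c}[g](\bm t)|^2\Var_f^n(\widehat\sM_{\bm X}(\bm t))$, with the noise part bounded by $\mu_{\bm Y}/n$. The paper writes this variance decomposition directly as $\Var(\widehat\sM_{\bm c}(\bm t))=\Var(\widehat\sM_{\bm c}(\bm t)-\sM_{\bm c}[g](\bm t)\widehat\sM_{\bm X}(\bm t))+|\sM_{\bm c}[g](\bm t)|^2\Var_f^n(\widehat\sM_{\bm X}(\bm t))$ and evaluates the first term via the vanishing cross-terms, which is exactly your law-of-total-variance conditioning on $\sigma(\bm X_1,\dots,\bm X_n)$ in a different notation.
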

Let us comment on the bound in Proposition \ref{pr:consis}. The upper bound of the risk, for $\bm k\in \IRp^2$, consists of the usual squared bias term $\|f-f_{\bm k}\|_{\bm x^{\ushort{2\bm c-\bm 1}}}^2$ and a decomposition of the variance term $\E_{f_{\bm Y}}^n(\|f_{\bm k}-f_{\bm k}\|_{\bm x^{\ushort{2\bm c-\bm 1}}}^2)$. We decomposed it  into an inverse problem term $\mu_{\bm Y}\Delta_g(\bm k)n^{-1}$, which also appears in \cite{Brenner-Miguel2021} and an dependency term $(4\pi^2)\int_{[-\bm k, \bm k]} \Var_f^n(\widehat \sM_{\bm X}(\bm t))d\bm t$, which is consistent with the result of \cite{Brenner-MiguelPhandoidaen2022}. \\
Nevertheless, it is clear to seen that the squared bias term is decreasing for $\bm k\in \IRp^2$ componentwise increasing, while the variance term is increasing. A choice of $\bm k\in \IRp^2$ is without further information of the unknown density non-trivial. Therefore we propose in the next paragraph a fully data-driven choice of $\bm k\in \IRp^2$, that is a choice, which is only dependent on the sample $(\bm Y_j)_{j\in \nset{n}}$ without further knowledge about the density $f$.
\paragraph*{Data-driven choice of $\bm k\in \IRp^2$}
We restrict ourselves to the case $\IP^{\bm U_1} = \Gamma_{(1/2,1/2)}^{\otimes 2}$, motivated by the stochastic volatility model, for a simple display of results.\\
First we reduce the space of possible cut-off parameters to $\mathcal K_n:=\{\bm k \in \nset{\log(n)} \times \nset{\log(n)} : \Lambda_g(\bm k) \leq n\}$. Then we define the model selection method for $\chi>0$ and $\widehat{\mu}_{\bm Y}:=n^{-1}\sum_{j\in \nset{n}} \bm Y_j^{\ushort{2(\bm c-\bm 1)}}$ by
\begin{align}\label{eq:pco:2}
\widehat{\bm k}:=\argmin_{\bm k\in \mathcal K_n} -\|\widehat f_{\bm k}\|_{\bm x^{\ushort{2\bm c-\bm 1}}}^2 +\widehat{\mathrm{pen}}(\bm k), \quad \widehat{\mathrm{pen}}(\bm k):= \chi\widehat{\mu}_{\bm Y}k_1k_2\frac{\Lambda_{g}(\bm k)}{n}. 
\end{align}
Compared to the penalty in \cite{Brenner-MiguelComteJohannes2021}, we see that the term $\mathrm{pen}(\bm k):= \E(\widehat{\mathrm{pen}}(\bm k)) = \chi \mu_{\bm Y} k_1k_2 \Lambda_g(\bm k) n^{-1}$ is not of the same order of the variance term. This overestimation of the variance term for supersmooth error densities is commonly found in the deconvolution literature.
\begin{thm}[Data-driven choice of $\bm k$]\label{thm:pco:2}
	Let $f\in \IL^2(\IRp^2, \bm x^{\ushort{2\bm c-\bm 1}})$ and $\E(\bm Y^{\ushort{4_1(\bm c-\bm 1)}})<\infty.$ 
	Then there exists $\chi_0\in \IRp$ such that for all $\chi \geq \chi_0$, 
	\begin{align*}\E(\|\widehat f_{\widehat{\bm k}}-f_{\bm K_n}\|_{\bm x^{\ushort{2\bm c-\bm 1}}}^2) \leq 3&\inf_{\bm k\in \mathcal K_n} \left(\|f_{\bm K_n}-f_{\bm k}\|_{\bm x^{\ushort{2\bm c-\bm 1}}}^2+ \mathrm{pen}(\bm k)\right) + C(g) \frac{\mu_{\bm X}}{n}\\
	&+C(g, \chi ) \frac{\E(\bm X_1^{\ushort{4(\bm c-\bm 1)}})\log(n)^2}{\mu_{\bm X} n}
	+\int_{[-\bm K_n, \bm K_n]} \Var_f^n(\widehat{\sM}_{\bm X}(\bm t))d\bm t\\
	&+C(g, \chi)\big( \frac{\Var(\widehat\mu_{\bm X})\log(n)^2}{\mu_{\bm X}}\big) \end{align*}
	where $C(g)$, respectively $C(g,\chi)$ are positive constant only depending on $g$, respectively $g$ and $\chi$ and $\bm K_n:=(\lfloor \log(n)\rfloor, \lfloor \log(n)\rfloor)^T$. Furthermore, we directly deduce that
	\begin{align*}\E(\|\widehat f_{\widehat{\bm k}}-f\|_{\bm x^{\ushort{2\bm c-\bm 1}}}^2) \leq 11&\inf_{\bm k\in \mathcal K_n} \left(\|f-f_{\bm k}\|_{\bm x^{\ushort{2\bm c-\bm 1}}}^2+ \mathrm{pen}(\bm k)\right) + C(g) \frac{\mu_{\bm X}}{n}\\
	&+C(g, \chi ) \frac{\E(\bm X_1^{\ushort{4(\bm c-\bm 1)}})\log(n)^2}{\mu_{\bm X} n}
	+\int_{[-\bm K_n, \bm K_n]} \Var_f^n(\widehat{\sM}_{\bm X}(\bm t))d\bm t\\
	&+C(g, \chi)\big( \frac{\Var(\widehat\mu_{\bm X})\log(n)^2}{\mu_{\bm X}}\big). \end{align*}
\end{thm}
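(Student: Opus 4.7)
My plan is a standard penalised-contrast / model-selection argument, executed first at the level of the truncated target $f_{\bm K_n}$ with $\bm K_n := (\lfloor \log n\rfloor, \lfloor \log n\rfloor)^T$, and then transferred to $f$ via Pythagoras in the Mellin domain. Two non-standard features have to be accommodated: the super-smooth noise regime (which forces the inflated penalty $\mathrm{pen}(\bm k) \asymp k_1 k_2 \Lambda_g(\bm k)/n$ rather than $\Lambda_g(\bm k)/n$), and the strict stationarity of $(\bm X_j)$ (which produces the extra variance term $\int_{[-\bm K_n,\bm K_n]} \Var_f^n(\widehat\sM_{\bm X}(\bm t)) d\bm t$ in the statement).

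First, by the Mellin Parseval identity \eqref{eq:Mel:plan} and orthogonality of the band decomposition $[-\bm k,\bm k]$ versus its complement, a direct calculation gives, for $\bm k \in \mathcal K_n$,
\begin{align*}
\|\widehat f_{\bm k}-f_{\bm K_n}\|_{\bm x^{\ushort{2\bm c-\bm 1}}}^2 ={} & -\|\widehat f_{\bm k}\|_{\bm x^{\ushort{2\bm c-\bm 1}}}^2 + \|f_{\bm K_n}\|_{\bm x^{\ushort{2\bm c-\bm 1}}}^2 \\
& + 2\langle \widehat f_{\bm k}, \widehat f_{\bm k} - f_{\bm K_n}\rangle_{\bm x^{\ushort{2\bm c-\bm 1}}}.
\end{align*}
Combining this identity written for $\bm k$ and for $\widehat{\bm k}$, using the defining inequality $-\|\widehat f_{\widehat{\bm k}}\|^2 + \widehat{\mathrm{pen}}(\widehat{\bm k}) \leq -\|\widehat f_{\bm k}\|^2 + \widehat{\mathrm{pen}}(\bm k)$, and splitting the residual cross term via $2ab \leq \eta a^2 + \eta^{-1}b^2$, I obtain a pivotal oracle inequality of the shape
\begin{align*}
\|\widehat f_{\widehat{\bm k}}-f_{\bm K_n}\|^2 \leq{} & 3\|\widehat f_{\bm k}-f_{\bm K_n}\|^2 + 6\bigl[\zeta(\widehat{\bm k}\vee\bm k) - p(\widehat{\bm k}\vee\bm k)\bigr]_+ \\
& + 3\mathrm{pen}(\bm k) + 3\bigl|\widehat{\mathrm{pen}} - \mathrm{pen}\bigr|(\widehat{\bm k}) + 3\bigl|\widehat{\mathrm{pen}} - \mathrm{pen}\bigr|(\bm k),
\end{align*}
where $\zeta(\bm k') := \sup_{h \in B_{\bm k'}} |\langle \widehat f_{\bm k'} - \E\widehat f_{\bm k'}, h\rangle_{\bm x^{\ushort{2\bm c-\bm 1}}}|^2$ is the squared Mellin empirical process on the unit ball $B_{\bm k'}$ of functions with Mellin support in $[-\bm k',\bm k']$, and the deterministic proxy $p(\bm k')$ is chosen so that $6p(\bm k') \leq \mathrm{pen}(\bm k')$ whenever $\chi \geq \chi_0$.

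The key observation for the dependent setting is the additive split
\begin{align*}
\widehat\sM_{\bm c}(\bm t) - \sM_{\bm c}[f_{\bm Y}](\bm t) ={} & \bigl[\widehat\sM_{\bm c}(\bm t) - \widehat\sM_{\bm X}(\bm t)\,\sM_{\bm c}[g](\bm t)\bigr] \\
& + \sM_{\bm c}[g](\bm t)\bigl[\widehat\sM_{\bm X}(\bm t) - \sM_{\bm c}[f](\bm t)\bigr].
\end{align*}
Conditionally on $(\bm X_j)_j$, the first bracket is a centred sum of i.i.d.\ $(\bm U_j)$-driven terms, so a conditional Talagrand inequality over $B_{\bm k'}$ controls $\E[\zeta(\bm k') - p(\bm k')]_+$ by a geometrically summable quantity whose sum over $\mathcal K_n$ (cardinality at most $\log^2 n$) produces the terms $C(g)\mu_{\bm X}/n$ and $C(g,\chi)\E(\bm X_1^{\ushort{4(\bm c-\bm 1)}})\log^2(n)/(\mu_{\bm X} n)$; here the fourth-moment assumption $\E(\bm Y^{\ushort{4(\bm c-\bm 1)}})<\infty$ enters through Bernstein's truncation device, and the factor $\mu_{\bm X}$ in the denominator arises by restricting to the event $\{\widehat\mu_{\bm X} \geq \mu_{\bm X}/2\}$. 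After division by $\sM_{\bm c}[g]$ inside Parseval, the second bracket contributes, by Fubini, exactly $(4\pi^2)^{-1}\int_{[-\bm K_n,\bm K_n]} \Var_f^n(\widehat\sM_{\bm X}(\bm t)) d\bm t$, which is the stationarity-specific variance term in the statement. The overestimation factor $k_1 k_2$ in $\mathrm{pen}$, together with the super-smooth blow-up of $\Lambda_g$, is precisely what is needed to dominate the sum of Talagrand residuals uniformly on $\mathcal K_n$, and this is why super-smoothness forces a heavier penalty than in the smooth case of \cite{Brenner-Miguel2021}.

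Finally, the random penalty satisfies $\widehat{\mathrm{pen}}(\bm k) - \mathrm{pen}(\bm k) = \chi k_1 k_2 \Lambda_g(\bm k)(\widehat\mu_{\bm Y}-\mu_{\bm Y})/n$; I split on $\Omega := \{|\widehat\mu_{\bm Y} - \mu_{\bm Y}| \leq \mu_{\bm Y}/2\}$, where Chebyshev gives $\IP(\Omega^c) \leq 4\Var(\widehat\mu_{\bm Y})/\mu_{\bm Y}^2$. Together with the crude bound $\widehat{\mathrm{pen}}(\bm k) \leq \chi \widehat\mu_{\bm Y} \log^2 n$ and the independence identity $\mu_{\bm Y} = \mu_{\bm X} \cdot \E(\bm U_1^{\ushort{2(\bm c-\bm 1)}})$, this produces the term $C(g,\chi)\Var(\widehat\mu_{\bm X})\log^2(n)/\mu_{\bm X}$; on $\Omega$ the multiplicative deviation is absorbed into enlarging $\chi_0$. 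Assembling the three steps and taking expectation gives the first displayed inequality. The second follows from Pythagoras in the Mellin domain, $\|\widehat f_{\widehat{\bm k}} - f\|^2 = \|\widehat f_{\widehat{\bm k}} - f_{\bm K_n}\|^2 + \|f_{\bm K_n} - f\|^2$ (since $\widehat{\bm k} \leq \bm K_n$ componentwise), combined with $\|f - f_{\bm K_n}\|^2 \leq \|f - f_{\bm k}\|^2$ for every $\bm k \in \mathcal K_n$; folding these together with the factor $3$ produced in the pivotal inequality above yields the numerical factor $11$. The principal obstacle is the Talagrand-type deviation bound for the Mellin empirical process under stationarity of $(\bm X_j)$: the conditional argument must be sharp enough that the overestimated super-smooth penalty $k_1 k_2 \Lambda_g(\bm k)/n$ still dominates the supremum of the centred terms while keeping the oracle bias–variance balance intact.
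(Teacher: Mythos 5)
Your overall architecture coincides with the paper's: a penalised-contrast inequality formulated at the level of $f_{\bm K_n}$; the split of the empirical Mellin process into a part centred conditionally on $(\bm X_j)_j$ — handled by a conditional Talagrand inequality after truncation at $c_n\propto\sqrt{n\,\widehat\mu_{\bm X}}$, which is where $\E(\bm Y_1^{\ushort{4(\bm c-\bm 1)}})<\infty$ and the event $\{\widehat\mu_{\bm X}\ge\mu_{\bm X}/2\}$ enter and where the terms $C(g)\mu_{\bm X}/n$ and $C(g,\chi)\E(\bm X_1^{\ushort{4(\bm c-\bm 1)}})\log^2(n)/(\mu_{\bm X}n)$ come from — and a dependency part that produces $\int_{[-\bm K_n,\bm K_n]}\Var_f^n(\widehat\sM_{\bm X}(\bm t))\,d\bm t$; the random penalty controlled on $\{|\widehat\mu_{\bm Y}-\mu_{\bm Y}|\le\mu_{\bm Y}/2\}$ via Chebyshev; and Pythagoras in the Mellin domain for the second display. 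This is precisely the content of the paper's Lemmas \ref{lem:help:1} and \ref{lem:help:2}.

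The one step that does not survive in this anisotropic, super-smooth setting is your treatment of the cross term through $\zeta(\widehat{\bm k}\vee\bm k)-p(\widehat{\bm k}\vee\bm k)$. To close your pivotal inequality you must absorb $6p(\widehat{\bm k}\vee\bm k)$ into $\mathrm{pen}(\bm k)+\widehat{\mathrm{pen}}(\widehat{\bm k})$, which requires a subadditivity of the form $\mathrm{pen}(\bm k\vee\bm k')\lesssim\mathrm{pen}(\bm k)+\mathrm{pen}(\bm k')$. With $\mathrm{pen}(\bm k)=\chi k_1k_2\Lambda_g(\bm k)n^{-1}$ and $\Lambda_g(\bm k)\asymp e^{\pi(k_1+k_2)}$ this fails: for $\bm k=(K,1)$ and $\bm k'=(1,K)$ one has $\mathrm{pen}(\bm k\vee\bm k')\asymp K^2e^{2\pi K}/n$ against $\mathrm{pen}(\bm k)+\mathrm{pen}(\bm k')\asymp Ke^{\pi K}/n$. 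For the same reason $\widehat{\bm k}\vee\bm k$ need not lie in $\mathcal K_n=\{\bm k:\Lambda_g(\bm k)\le n\}$, so the maximum of Talagrand residuals would have to run over indices where $\Lambda_g$ exceeds $n$ and your summability argument breaks down. The paper avoids this entirely: setting $A^*=[-\widehat{\bm k},\widehat{\bm k}]\cup[-\bm k,\bm k]$ and using $\mathds 1_{A^*}\le\mathds 1_{[-\bm k,\bm k]}+\mathds 1_{[-\widehat{\bm k},\widehat{\bm k}]}$ inside the Cauchy--Schwarz step turns the cross term into $4\|\widehat f_{\bm k}-f_{\bm k}\|^2+4\|\widehat f_{\widehat{\bm k}}-f_{\widehat{\bm k}}\|^2$ plus halves of bias terms, so every residual stays indexed by a single element of $\mathcal K_n$ and only $p(\bm k')\le\tfrac{1}{12}\mathrm{pen}(\bm k')$ for individual $\bm k'\in\mathcal K_n$ is needed. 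Replace your $\vee$ device by this union-of-indicators bound and the rest of your argument goes through as sketched.
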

\begin{rem}
	It is worth stressing out that the first inequality of Theorem \ref{thm:pco:2} still holds true even without
	the assumption $f\in \IL^2(\IR_+^2,\bm x^{\ushort{2\bm c-\bm 1}})$. Indeed, by the abuse of notation $\sM_{\bm c}[f](\bm t):=\E(\bm X_1^{\ushort{\bm c-\bm 1+i\bm t}}), \bm t\in \IR_+^2,$ we can ensure that the functions $f_{\bm k}, \bm k\in \mathcal K_n$, are still well-defined with $\E_{f_Y}^n(\widehat f_{\bm k})=f_{\bm k}$. In other words, it is sufficient to use the $\IL^1$ notion of the Mellin transform since we only consider the distance between our estimator $\widehat f_{\widehat{\bm k}}$ and $f_{\bm K_n}$ and are therefore not in need of the assumption $f\in \IL^2(\IR_+, \bm x^{\ushort{2\bm c-\bm 1}})$. 
\end{rem}

    \paragraph*{Acknowledgement}
    This work is supported by the Deutsche Forschungsgemeinschaft (DFG, German Research Foundation) under Germany’s Excellence Strategy EXC 2181/1 - 390900948 (the Heidelberg STRUCTURES Excellence Cluster) and by the Research Training Group ”Statistical Modeling of Complex Systems”.
\newpage
\section{Appendix}
\label{sec_append}

\subsection{Useful inequalities}

The following inequality is due to
\cite{Talagrand1996}, the formulation of the first part  can be found for
example in \cite{KleinRio2005}.

\begin{lem}(Talagrand's inequality)\label{tal:re} Let
	$Z_1,\dotsc,Z_n$ be independent $\mathcal Z$-valued random variables and let $\bar{\nu}_{h}=n^{-1}\sum_{i=1}^n\left[\nu_{h}(Z_i)-\IE\left(\nu_{h}(Z_i)\right) \right]$ for $\nu_{h}$ belonging to a countable class $\{\nu_{h},h\in\sH\}$ of measurable functions. Then, for all $\varepsilon>0$
	\begin{align}
	&\IE\left(\sup_{h\in\sH}|\bar{\nu}_{h}|^2-2(1+2\varepsilon)\Psi^2\right)_+\leq C \left[\frac{\tau}{n}\exp\left(\frac{-K_1\varepsilon n\Psi^2}{\tau}\right)+\frac{\psi^2}{C_{\varepsilon}^2n^2}\exp\left(\frac{-C_{\varepsilon} K_2 \sqrt{\varepsilon} n \Psi}{\psi}\right) \right]\label{tal:re1} 
	\end{align}
	with numerical constants $C_{\varepsilon}:=\sqrt{1+\varepsilon}-1$ and $C>0$ and where
	\begin{equation*}
	\sup_{h\in\sH}\sup_{z\in\mathcal Z}|\nu_{h}(z)|\leq \psi,\qquad \IE(\sup_{h\in\sH}|\overline{\nu}_{h}|)\leq \Psi,\qquad \sup_{h\in \sH}\frac{1}{n}\sum_{j=1}^n \Var(\nu_{h}(Z_i))\leq \tau.
	\end{equation*}
\end{lem}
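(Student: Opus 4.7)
The plan is to derive the stated moment bound by first establishing a Bernstein--Bennett type tail inequality for the supremum $Z:=\sup_{h\in\sH}|\bar{\nu}_h|$ and then integrating this deviation bound. The core ingredients are (i) Talagrand's concentration inequality for suprema of empirical processes in its Bousquet/Klein--Rio form, and (ii) a careful splitting of the tail into a sub-Gaussian regime and a sub-exponential regime, which is what produces the two exponential terms on the right-hand side.

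First, I would invoke the one-sided concentration inequality for $Z$ around its mean: there exist universal constants such that, for every $t>0$,
\begin{equation*}
\IP\bigl(Z \ge \IE(Z) + t\bigr) \le \exp\!\Bigl(-\tfrac{nt^2}{2(\tau+2\psi\IE(Z))+\tfrac{2}{3}\psi t}\Bigr).
\end{equation*}
This is standard, provable via the entropy method or Talagrand's isoperimetric inequality on product spaces. Combined with $\IE(Z)\le \Psi$ and the elementary inequality $(a+b)^2\le (1+2\varepsilon)a^2+(1+\tfrac{1}{2\varepsilon})b^2$, squaring gives
\begin{equation*}
\IP\bigl(Z^2 \ge (1+2\varepsilon)\Psi^2 + u\bigr) \le \IP\bigl(Z\ge \Psi + \sqrt{C_\varepsilon^2 u}\bigr),
\end{equation*}
after absorbing the cross terms in the factor $(1+2\varepsilon)$. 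Using the two-regime splitting of the Bernstein denominator (variance-dominated when $u\lesssim n\Psi^2\tau/\psi^2$, range-dominated otherwise) produces the two characteristic bounds
\begin{equation*}
\IP\bigl(Z^2 \ge 2(1+2\varepsilon)\Psi^2 + u\bigr) \le \exp\!\Bigl(-\tfrac{K_1 n\varepsilon (\Psi^2+u)}{\tau}\Bigr) + \exp\!\Bigl(-\tfrac{K_2 C_\varepsilon\sqrt{\varepsilon}\,n\sqrt{u}}{\psi}\Bigr),
\end{equation*}
where the extra slack captured by $2(1+2\varepsilon)\Psi^2$ (rather than $(1+2\varepsilon)\Psi^2$) absorbs the constants generated when converting $\IE(Z)$ into the known upper bound $\Psi$ and performing the squaring inequality.

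Next I would integrate, using $\IE(W_+) = \int_0^\infty \IP(W>u)\,du$ with $W=Z^2-2(1+2\varepsilon)\Psi^2$. The first tail contributes
\begin{equation*}
\int_0^\infty \exp\!\Bigl(-\tfrac{K_1 n\varepsilon u}{\tau}\Bigr)du \cdot \exp\!\Bigl(-\tfrac{K_1 n\varepsilon \Psi^2}{\tau}\Bigr) = \tfrac{\tau}{K_1 n\varepsilon}\exp\!\Bigl(-\tfrac{K_1 n\varepsilon \Psi^2}{\tau}\Bigr),
\end{equation*}
matching the first summand. For the second term, the change of variables $v=\sqrt{u}$ yields
\begin{equation*}
2\int_0^\infty v\exp\!\Bigl(-\tfrac{K_2 C_\varepsilon\sqrt{\varepsilon}\,nv}{\psi}\Bigr)dv = \tfrac{2\psi^2}{K_2^2 C_\varepsilon^2\varepsilon n^2},
\end{equation*}
which, together with the evaluation of the tail at $u=0$ giving the exponential prefactor, reproduces the second summand up to absorbing constants into $C$.

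The main obstacle I expect is \emph{not} the integration but keeping track of the precise interplay between the slack factor $(1+2\varepsilon)$ and the two constants $K_1,K_2$: the tail bound at the mean $\IE(Z)$ is \emph{not} directly a tail bound at $\Psi$, and one needs the additional $(1+2\varepsilon)\Psi^2$ buffer to absorb the discrepancy without losing the exponential decay at $u=0$. A secondary difficulty is choosing the optimal split point between the sub-Gaussian and sub-exponential regimes so that the two exponentials cleanly appear separately; this is exactly the step where the factor $C_\varepsilon=\sqrt{1+\varepsilon}-1$ enters, and where one must be careful to retain $\sqrt{\varepsilon}$ (not $\varepsilon$) in the second exponent. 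Everything else (measurability of $\sup_{h\in\sH}$ via countability, the symmetrization producing centered variables) is routine.
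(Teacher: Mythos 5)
The paper does not actually prove this lemma: it is quoted as a known result, attributed to Talagrand (1996) with the stated formulation taken from Klein and Rio (2005), so there is no in-paper argument to compare against line by line. Your proposal reconstructs the standard derivation of this moment-type bound from the Klein--Rio/Bousquet deviation inequality, and its architecture is the correct one: (i) a Bernstein-type tail bound for $Z=\sup_{h\in\sH}|\bar\nu_h|$ around $\IE(Z)$, (ii) replacement of $\IE(Z)$ by $\Psi$ and a squaring inequality that converts the event $\{Z^2\ge 2(1+2\varepsilon)\Psi^2+u\}$ into a deviation of size depending on $\Psi$ and $\sqrt{u}$, (iii) splitting the Bernstein denominator into the variance-dominated and range-dominated regimes, and (iv) integrating the two resulting tails, which yields exactly the $\tau/n$ and $\psi^2/(C_\varepsilon^2n^2)$ prefactors. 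This is precisely how the lemma is established in the adaptive-estimation literature the paper draws on (e.g.\ the appendices of Comte--Lacour-type papers), so you are not taking a different route so much as supplying the omitted one.

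Two points deserve care before this could stand as a complete proof. First, the Klein--Rio inequality is one-sided (for $\sup_h\bar\nu_h$); to handle $\sup_h|\bar\nu_h|$ one must apply it to the class $\{\nu_h,-\nu_h : h\in\sH\}$, which is harmless but should be said, and the countability of $\sH$ is what makes $Z$ measurable. Second, the constant bookkeeping in step (ii) is genuinely where the content lies: the passage from $\sqrt{2(1+2\varepsilon)\Psi^2+u}$ to $\IE(Z)+t(u)$ must be arranged so that the $\Psi^2$-part of $t(u)^2$ retains a factor proportional to $\varepsilon$ (not $\varepsilon^2$) in the Gaussian regime and so that the $\sqrt{u}$-part carries the factor $C_\varepsilon\sqrt{\varepsilon}$ in the exponential regime; a naive use of $\sqrt{a+b}\ge(\sqrt a+\sqrt b)/\sqrt2$ does not immediately deliver the exponents as stated, and this is where the specific constants $K_1,K_2$ and the choice $C_\varepsilon=\sqrt{1+\varepsilon}-1$ of Klein--Rio enter. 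You flag this as the main obstacle, correctly, but the proposal as written leaves it unresolved; since the lemma is anyway imported verbatim from the cited references, this gap is one of verification rather than of approach.
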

The key statement regarding $\beta$-mixing processes is delivered by the proposed variance bound derived by \cite{asin} after Lemma 4.1 of the same work. Their approach is based on the original idea of \cite[Theorem 2.1]{viennet}.
\begin{lem} \label{lem:beta_mix_var_bound}
	Let $(Z_j)_{j \in \IZ}$ be a strictly stationary process of real-valued random variables with common marginal distribution $\IP$. There exists a sequence $(b_k)_{k \in \IN}$ of measurable functions $b_k:\IR \to [0,1]$ with $\IE_{\IP}[b_k(Z_0)] = \beta(Z_0,Z_k)$ such that for any measurable function $h$ with $\IE[|h(Z_0)|^2] < \infty$ and $b = \sum_{k=1}^\infty (k+1)^{p-2}b_k : \IR \to [0,\infty]$, $p \ge 2$,
	\begin{equation*}
	\Var\big(\sum_{j=1}^n h(Z_j)\big) \le 4n \IE[|h(Z_0)|^2b(Z_0)]
	\end{equation*}
	where we set $b_0 \equiv 1$.
\end{lem}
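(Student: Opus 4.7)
The plan is to combine the standard second–moment decomposition of $\Var(\sum_{j=1}^n h(Z_j))$ with the covariance inequality of Viennet that is explicitly cited in the text. First I would exploit the strict stationarity of $(Z_j)_{j\in \IZ}$ to write
\begin{equation*}
\Var\Bigl(\sum_{j=1}^n h(Z_j)\Bigr)
= n\, \Var(h(Z_0)) + 2 \sum_{k=1}^{n-1} (n-k)\, \Cov(h(Z_0), h(Z_k))
\leq n\,\E[h(Z_0)^2] + 2n \sum_{k=1}^{n-1} |\Cov(h(Z_0), h(Z_k))|,
\end{equation*}
so the problem reduces to controlling the individual covariances at each lag $k$ in terms of the $\beta$–mixing coefficient $\beta_k := \beta(\sigma(Z_0), \sigma(Z_k))$.

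The key input is Viennet's covariance inequality (Theorem 2.1 of Viennet 1997), which produces, for each lag $k \geq 1$, a measurable function $b_k : \IR \to [0,1]$ with $\E[b_k(Z_0)] = \beta_k$ such that for every measurable $h$ with $\E[h(Z_0)^2] < \infty$,
\begin{equation*}
|\Cov(h(Z_0), h(Z_k))| \leq 2\, \E[h(Z_0)^2 b_k(Z_0)].
\end{equation*}
The construction of $b_k$ proceeds via the Radon–Nikodym derivative of $\IP^{(Z_0, Z_k)}$ with respect to $\IP^{Z_0} \otimes \IP^{Z_k}$ together with a rearrangement/quantile argument relating $\int_0^{\beta_k} Q_h^2(u)\, du$ to $\E[h^2(Z_0) b_k(Z_0)]$; I would quote this rather than redo it.

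Substituting the Viennet bound back into the decomposition gives
\begin{equation*}
\Var\Bigl(\sum_{j=1}^n h(Z_j)\Bigr)
\leq n\,\E[h(Z_0)^2] + 4n \sum_{k=1}^{n-1} \E[h(Z_0)^2 b_k(Z_0)]
= 4n\, \E\Bigl[h(Z_0)^2\Bigl( \tfrac{1}{4} b_0(Z_0) + \sum_{k=1}^{n-1} b_k(Z_0)\Bigr)\Bigr],
\end{equation*}
where the convention $b_0 \equiv 1$ absorbs the first summand. Finally, since $(k+1)^{p-2} \geq 1$ for all $k \geq 1$ and $p \geq 2$, and since $b_k \geq 0$, I can enlarge the finite sum $\sum_{k=1}^{n-1} b_k(Z_0)$ to the weighted tail $\sum_{k=1}^\infty (k+1)^{p-2} b_k(Z_0) = b(Z_0)$, which yields the claimed bound $4n\,\E[h(Z_0)^2 b(Z_0)]$.

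The only genuinely nontrivial step is the Viennet covariance inequality itself, which I would invoke as a black box; the rest is bookkeeping (stationary decomposition, telescoping the $(n-k)$ weight by $n$, and using $(k+1)^{p-2}\geq 1$). The main subtlety to watch is the role of $b_0 \equiv 1$: it is not part of the series defining $b$, but it is needed to absorb the diagonal variance term $n\,\E[h(Z_0)^2]$ into the final $4n\,\E[h(Z_0)^2 b(Z_0)]$–bound without losing a factor. No other probabilistic machinery is required beyond stationarity and Viennet's theorem.
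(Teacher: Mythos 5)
The paper does not prove this lemma at all: it is imported verbatim from the cited literature (Viennet's Theorem 2.1, in the form restated after Lemma 4.1 of the other cited work), so there is no in-paper proof to compare against. Your reconstruction is the standard and correct argument for that result: the stationary second-moment decomposition with $(n-k)\le n$, followed by the Delyon--Viennet covariance inequality at each lag, followed by enlarging the finite sum to the weighted series using $(k+1)^{p-2}\ge 1$. Two small points of care. First, the covariance inequality as Viennet/Delyon state it produces \emph{two} functions, one measurable with respect to each $\sigma$-field, and a product of square roots, $|\Cov(X,Y)|\le 2\,\E[d_1X^2]^{1/2}\E[d_2Y^2]^{1/2}$; passing to the single-function form $2\,\E[h^2(Z_0)b_k(Z_0)]$ with $\E[b_k(Z_0)]=\beta_k$ uses $2\sqrt{ab}\le a+b$ together with stationarity to average the two functions -- you gloss over this, but it is exactly the step packaged in the cited lemma, so quoting it as a black box is legitimate. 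Second, your remark that $b_0$ ``is not part of the series defining $b$'' would actually break the final inequality: if $b$ really excluded the $k=0$ term, the bound would fail already for an i.i.d.\ sequence (all $b_k\equiv 0$ for $k\ge 1$, yet the variance is positive). The statement must be read with $b=\sum_{k\ge 0}(k+1)^{p-2}b_k$ and $b_0\equiv 1$, which is Viennet's convention and is clearly what the trailing clause intends; with that reading your bookkeeping $n\,\E[h^2]+4n\sum_{k\ge1}\E[h^2b_k]\le 4n\,\E[h^2b]$ closes correctly.
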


\subsection{Proofs of Section \ref{sec_intro}}\label{a:intro}
\begin{proof}[Proof of Proposition \ref{prop:ass:3}]
		Let us begin with (i). For $i\in \llbracket 2\rrbracket$ we have 
		\begin{align*}
		\log(X_{1,i})-\log(V_{0,i}) &=\log\left(\Delta^{-1}\int_0^{\Delta} e^{Z_{t,i}-Z_{0,i}}dt\right)\leq \sup_{t\in [0, \Delta]} Z_{t,i}-Z_{0,i} \\
		\log(V_{0,i})-\log(X_{1,i}) &=-\log\left(\Delta^{-1}\int_0^{\Delta} e^{Z_{t,i}-Z_{0,i}}dt\right)\leq- \inf_{t\in [0, \Delta]} Z_{t,i}-Z_{0,i} 
		\end{align*}
		implying that $\E(|\log(X_{1,i})-\log(V_{0,i})|) \leq \E(\sup_{t\in [0, \Delta]} |Z_{t,i}-Z_{0,i}|)$. Next since $Z_{t,i}-Z_{0,i} = \int_0^t b_i(\bm Z_t) dt +\sum_{j=1}^2 \int_0^t a_{j,i}(\bm Z_t) d\widetilde W_t^j $ we get
		\begin{align*}
		\E(\sup_{t\in [0,\Delta]} |Z_{t,i}-Z_{0,i}|) &\leq \Delta \E(|b_i(\bm Z_0)|)+ \sum_{j=1}^2 \E\left(\sup_{t\in [0, \Delta]}\left| \int_0^t a_{i,j}(\bm Z_t) dW_t^j\right|\right)\\
		& \leq c(1+ \E(|\bm Z_0|_{\IR^2}^2)) \Delta^{1/2}
		\end{align*}
		using the Jensen inequality and the Burkholder-Davis-Gundy inequality.\\
		For (ii) we first see that for $x,y\in \IRp$ with $x<y $ holds $|\log(y)-\log(x)|=\log(y/x) =\log(1+(y-x)/x) \leq (y-x)/x=|y-x|/x$ which implies that $|\log(y)-\log(x)| \leq |y-x| /(x\wedge y)$. Here, $a\wedge b:= \min(a,b)$ for $a,b\in \IR$. We deduce that
		\begin{align*} 
		\E(|\log(X_{1,i}-\log(V_{0,i})|)^2&\leq \E(|X_{1,i}-V_{0,i}|^2) \E((V_{0,i}\wedge X_{1,i})^{-2})\\
		& \leq \E(\sup_{t\in [0,\Delta]} |V_{t,i} -V_{0,i}|^2) \E((V_{0,i}\wedge X_{1,i})^{-2}),
		\end{align*}
		Since $X_{1,i} \geq \inf_{t\in [0,\Delta]} V_{t,i}$ we get 
		$ \E((V_{0,i}\wedge X_{1,i})^{-2}))\leq \E(\sup_{t\in [0,\Delta]} (V_{t,i})^{-2}).$
		Analogously to (i) we can show that $\E(\sup_{t\in [0,\Delta]} |V_{t,i} -V_{0,i}|^2)\leq c\Delta(1+ \E(|\bm V_0|_{\IR^2}^2)).$
\end{proof}
\subsection{Proof of Section \ref{sec_svde}}
\begin{proof}[Proof of Theorem \ref{pro:volatility}]
		By a disjoint support argument and the Plancherel equality \ref{eq:Mel:plan} we have $\langle f_{\bm V}-f_{\bm V,\bm k}, f_{\bm V, \bm k}-\widehat f_{\Delta, \bm k}\rangle_{\bm x^{\ushort{1}}} = 0$ implying 
		\begin{align*}
		\|f_{\bm V}- \widehat f_{\Delta, \bm k}\|_{\bm x^{\ushort{\bm 1}}}^2 &= \|f_{\bm V}- f_{\bm V,\bm k}\|_{\bm x^{\ushort{\bm 1}}}^2+ \|f_{\bm V, \bm k}-\widehat f_{\Delta, \bm k} \|_{\bm x^{\ushort{\bm 1}}}^2.
		\end{align*}
		Let us define $f_{\Delta, \bm k}:=\E_{f_{\bm Y}}^n(\widehat f_{\Delta, \bm k})$. Then, $\E_{f_{\bm Y}}^n(\langle f_{\bm V, \bm k}- f_{\Delta, \bm k}, f_{\Delta,\bm k}-\widehat f_{\Delta, \bm k}\rangle_{\bm x^{\ushort{\bm 1}}})=0$ and
		$$\E_{f_{\bm Y}}^n(\|f_{\bm V}- \widehat f_{\Delta,\bm k}\|_{\bm x^{\ushort{\bm 1}}}^2) = \|f_{\bm V}-f_{\bm V, \bm k}\|_{\bm x^{\ushort{\bm 1}}}^2 + \|f_{\bm V, \bm k}- f_{\Delta, \bm k}\|_{\bm x^{\ushort{\bm 1}}}^2+ \E_{f_{\bm Y}}^n(\| f_{\Delta, \bm k}-\widehat f_{\Delta, \bm k}\|_{\bm x^{\ushort{\bm 1}}}^2).$$
		Following the steps of the proof of Proposition \ref{pr:consis}, we get
		$$ \E(\|f_{\Delta, \bm k}-\widehat f_{\Delta,\bm k}\|_{\bm x^{\ushort{\bm 1}}}^2) \leq \frac{\Lambda_g(\bm k)}{n}+ \frac{1}{(2\pi)^2} \int_{[-\bm k, \bm k]} \Var(\widehat{\sM}_{\bm X}(\bm t))d\bm t$$
		where $\widehat \sM_{\bm X}(\bm t):=n^{-1}\sum_{j\in \nset{n}} \bm X_j^{\ushort{i\bm t}}$. Now since $(\bm X_j)_{j\in \nset{n}}$ is a measurable transformation of $(\bm V_t)_{t\geq 0}$ which fulfills $(\bm{\mathrm A_1})$-$(\bm{\mathrm A_2})$ we have $\beta_{\bm X}(j) \leq \beta_{\bm V}(\Delta(j-1))$ and, with Lemma \ref{lem:beta_mix_var_bound}, that
		$$ \Var(\widehat \sM_{\bm X}(\bm t)) \leq \frac{4}{n} \sum_{k=1}^{\infty} \beta_{\bm V}(\Delta(k-1))\leq \frac{4}{n\Delta}\int_{\IRp} \beta_{\bm V}(s)ds.$$
		Considering the term $\|f_{\bm V,\bm k}-f_{\Delta, \bm k}\|_{\bm x^{\ushort{\bm 1}}}^2$,
		we get, since $|\bm x^{\ushort{i\bm t}}-\bm y^{\ushort{i\bm t}}| \leq  t_1 |\log(x_1)-\log(y_1)| +  t_2 |\log(x_2)-\log(y_2)|$ that
		\begin{align*}
		\|f_{\bm V, \bm k}-f_{\Delta, \bm k}\|_{\bm x^{\ushort{\bm 1}}}^2&= \frac{1}{(2\pi)^2} \int_{[-\bm k, \bm k]} |\E((\bm V_0)^{\ushort{i\bm t}}- \bm X_1^{\ushort{i\bm t}})|^2 dt\\
		& \leq \int_{[-\bm k, \bm k]} t_1^2\E(|\log(V_{0,1})-\log(X_{1,1})|)^2+ t_2^2\E(|\log(V_{0,2})-\log(X_{1,2})|)^2 d\bm t\\
		&\leq \bm k^{\ushort{\bm 3}} \E(|\log(V_{0,1})-\log(X_{1,1})|)+ |\log(V_{0,2})-\log(X_{1,2})|)^2 \leq \mathfrak c \Delta \bm k^{\ushort{\bm 3}}
		\end{align*}
		exploiting $(\bm{\mathrm A_3})$.
	\end{proof}	
	\begin{proof}[Proof of Theorem \ref{thm:pco:sv}]
		Since $\E_{f_{\bm Y}}^n(\|f_{\bm V}-\widehat f_{\Delta,\widehat{\bm k}}\|_{\bm x^{\ushort{\bm 1}}}^2) = \|f_{\bm V}-f_{\bm V, \bm K_n}\|_{\bm x^{\ushort{\bm 1}}}^2 + \E_{f_{\bm Y}}^n (\|f_{\bm V,\bm K_n }-\widehat f_{\Delta, \widehat{\bm k}}\|_{\bm x^{\ushort{\bm 1}}}^2)$ it follows
		\begin{align*} 
		\E_{f_{\bm Y}}^n(\|f_{\bm V}-\widehat f_{\Delta,\widehat{\bm k}}\|_{\bm x^{\ushort{\bm 1}}}^2) 
		\leq &\|f_{\bm V}-f_{\bm V, \bm K_n}\|_{\bm x^{\ushort{\bm 1}}}^2 + 2\|f_{\bm V,\bm K_n}-f_{\Delta,\bm K_n}\|_{\bm x^{\ushort{\bm 1}}}^2 \\
		&+2\E_{f_{\bm Y}}^n(\|f_{\Delta,\bm K_n }-\widehat f_{\Delta, \widehat{\bm k}}\|_{\bm x^{\ushort{\bm 1}}}^2).
		\end{align*}
		For the last summand we can apply the result of Theorem \ref{thm:pco:2} with $f_{\bm K_n}:=f_{\Delta,\bm K_n}$ and $\widehat f_{\bm k}:=\widehat f_{\Delta, \bm k}$ to get
		\begin{align*}
		\E_{f_{\bm Y}}^n(\|f_{\Delta,\bm K_n }-\widehat f_{\Delta, \widehat{\bm k}}\|_{\bm x^{\ushort{\bm 1}}}^2)
		\leq &3\inf_{\bm k\in \mathcal K_n} \left(\|f_{\Delta,\bm K_n}-f_{\Delta,\bm k}\|_{\bm x^{\ushort{\bm 1}}}^2+ \mathrm{pen}(\bm k)\right) + \frac{C(g) }{n}\\
		&+
		\int_{[-\bm K_n, \bm K_n]} \Var_f^n(\widehat{\sM}_{\bm X}(\bm t))d\bm t.
		\end{align*}
		Here, several summands in the bound of Theorem \ref{thm:pco:2} can be obmitted since the penality term $\mathrm{pen}(\bm k)$ has not to be estimated in the case of $\bm c=(1,1)^T$. Now since 
		\begin{align*}
		\|f_{\Delta, \bm K_n}-f_{\Delta,\bm k}\|_{\bm x^{\ushort{\bm 1}}}^2 &\leq 3(\|f_{\Delta,\bm K_n}-f_{\bm V, \bm K_n}\|_{\bm x^{\ushort{\bm 1}}}^2+ \|f_{\bm V, \bm K_n}-f_{\bm V, \bm k}\|_{\bm x^{\ushort{\bm 1}}}^2+\|f_{\bm V, \bm k}-f_{\Delta,\bm k}\|_{\bm x^{\ushort{\bm 1}}}^2) \\
		&\leq 6 \|f_{\Delta,\bm K_n}-f_{\bm V, \bm K_n}\|_{\bm x^{\ushort{\bm 1}}}^2 +  3\|f_{\bm V, \bm K_n}-f_{\bm V, \bm k}\|_{\bm x^{\ushort{\bm 1}}}^2
		\end{align*}
		we get
		\begin{align*}
		\E(\|f_{\bm V}-\widehat f_{\Delta,\widehat{\bm k}}\|_{\bm x^{\ushort{\bm 1}}}^2) \leq  C&\left( \inf_{\bm k\in \mathcal K_n}\left(\|f_{\bm V}-f_{\bm V, \bm k}\|_{\bm x^{\ushort{\bm 1}}}^2 +\mathrm{pen}(\bm k)\right) + \|f_{\Delta,\bm K_n}-f_{\bm V, \bm K_n}\|_{\bm x^{\ushort{\bm 1}}}^2  \right)\\
		& + \frac{C(g)}{n}  + 8\int_{[-\bm K_n, \bm K_n]} \Var_f^n(\widehat{\sM}_{\bm X}(\bm t))d\bm t \\
		& \leq C\inf_{\bm k\in \mathcal K_n}\left(\|f_{\bm V}-f_{\bm V, \bm k}\|_{\bm x^{\ushort{\bm 1}}}^2 +\mathrm{pen}(\bm k)\right) + C\Delta\log(n)^6 \\
		& + \frac{C(g)}{n}  +\frac{C(\beta_{\bm V}) \log^2(n)}{n\Delta}
		\end{align*}
		following the proof steps of Theorem \ref{pro:volatility} and using that $\bm K_{n}^{\ushort{\bm 1}}=K_{n,1}K_{n,2} \leq \log^2(n).$
\end{proof}
\subsection{Proof of Section \ref{sec_stat_pro}}\label{a:stat_pro}

\begin{proof}[Proof of Proposition \ref{pr:consis}]
	The Plancherel equation \eqref{eq:Mel:plan} implies $\langle f_{\bm k}, f-f_{\bm k}\rangle_{\bm x^{\ushort{2\bm c-\bm 1}}}=\langle \sM_{\bm c}[f_{\bm k}], \sM_{\bm c}[f-f_{\bm k}]\rangle_{\IR^2}=0$ since $\sM_{\bm c}[f_{\bm k}]$ and $\sM_{\bm c}[f-f_{\bm k}]$ have disjoint support. Thus
	\begin{align*}
	\E_{f_{\bm Y}}^n(\|\widehat f_{\bm k}-f\|_{\bm x^{\ushort{2\bm c-\bm 1}}}^2)&= \|f-f_{\bm k}\|_{\bm x^{\ushort{2\bm c-\bm 1}}}^2 + \E_{f_{\bm Y}}^n(\|\widehat f_{\bm k}-f_{\bm k}\|_{\bm x^{\ushort{2\bm c-\bm 1}}}^2) \\
	&=\|f-f_{\bm k}\|_{\bm x^{\ushort{2\bm c-\bm 1}}}^2 + \frac{1}{(2\pi)^2} \int_{[-\bm k, \bm k]} \frac{\Var_{f_{\bm Y}}^n(\widehat\sM_{\bm c}(\bm t))}{|\sM_{\bm c}[g](\bm t)|^2} d\bm t
	\end{align*}
	by application of the Parseval equality \eqref{eq:Mel:plan} and the Fubini-Tonelli theorem. Since for any $\bm t\in \IR^2$ $$\Var_{f_{\bm Y}}^n(\widehat \sM_{\bm c}(\bm t))= \Var^n_{f_{\bm Y}}(\widehat \sM_{\bm c}(\bm t)-\sM_{\bm c}[g](\bm t)\widehat \sM_{\bm X}(\bm t))+ |\sM_{\bm c}[g](\bm t)|^2\Var_f^n(\widehat \sM_{\bm X}(\bm t)),$$ we decompose the variance term $\E_{f_{\bm Y}}^n(\|\widehat f_{\bm k}-f_{\bm k}\|_{\bm x^{\ushort{2\bm c-\bm 1}}}^2)$ into
	\begin{align*}
	 \int_{[-\bm k, \bm k]}\frac{\Var^n_{f_{\bm Y}}(\widehat\sM_{\bm c}(\bm t))}{|\sM_{\bm c}[g](\bm t)|^2} d\bm t =&\int_{[-\bm k, \bm k]} \frac{\Var_{f_{\bm Y}}^n(\widehat\sM_{\bm c}(\bm t)-\sM_{\bm c}[g](\bm t)\widehat \sM_{\bm X}(\bm t))}{|\sM_{\bm c}[g](\bm t)|^2}d\bm t\\
	 &+\int_{[-\bm k, \bm k]}  \Var^n_{f}(\widehat\sM_{\bm X}(\bm t))d\bm t.
	\end{align*}
	Now, the equality $\E^n_{f_{\bm Y}}(\bm X_j^{\ushort{\bm c-\bm 1+i\bm t}} \bm X_{j'}^{\ushort{\bm c-\bm 1-i\bm t}}(\bm U_j^{\ushort{\bm c-\bm 1+i\bm t}}-\sM_{\bm c}[g](\bm t))(\bm U_{j'}^{\ushort{\bm c-\bm 1-i\bm t}}-\sM_{\bm c}[g](-\bm t))=\delta_{j,j'} \E_f(\bm X_1^{\ushort{2(\bm c-\bm 1)}})\Var_{g}(\bm U_j^{\ushort{\bm c-\bm 1+i\bm t}})$ implies for the first summand
	\begin{align*}
	\int_{[-\bm k, \bm k]} \frac{\Var_{f_{\bm Y}}^n(\widehat\sM_{\bm c}(\bm t)-\sM_{\bm c}[g](\bm t)\widehat \sM_{\bm X}(\bm t))}{|\sM_{\bm c}[g](\bm t)|^2}d\bm t &= \int_{[-\bm k, \bm k]} \frac{\E_f(\bm X_1^{\ushort{2(\bm c-\bm 1)}}) \Var_g(\bm U_1^{\ushort{\bm c-\bm 1+i\bm t}})}{ n|\sM_{\bm c}[g](\bm t)|^2} d\bm t \\
	&\leq \frac{(2\pi)^2\mu_Y \Lambda_g(\bm k)}{n}.
	\end{align*}
\end{proof}
\begin{proof}[Proof of Theorem \ref{thm:pco:2}]
	Let $\bm k\in \mathcal K_n$ and let us keep in mind that $[-\bm k', \bm k']=\mathrm{supp}(\mathcal M_{\bm c}[f_{\bm k'}])$, for $\bm k'\in \mathcal K_n$. Further we choose $\bm K_n\in (\IN^*)^2$ such that $K_{1,n}=K_{2,n}:=\lfloor \log(n)\rfloor$. Then for all $\bm k'\in \mathcal K_n$ holds $[-\bm k', \bm k]\subseteq [-\bm K_n, \bm K_n]$. Further, we have for any $\bm k'\in \mathcal K_n$ that $\|\widehat f_{\bm K_n}\|_{\bm x^{\ushort{2\bm c-\bm 1}}}^2- \|\widehat f_{\bm k'}\|_{\bm x^{\ushort{2\bm c-\bm 1}}}^2=\|\widehat f_{\bm K_n}-\widehat f_{\bm k'}\|_{\bm x^{\ushort{2\bm c-\bm 1}}}^2$ implying with \eqref{eq:pco:2}
	$$ \|\widehat f_{\widehat{\bm k}}-\widehat f_{\bm K_n}\|_{\bm x^{\ushort{2\bm c-\bm 1}}}^2 +\widehat{ \mathrm{pen}}(\widehat {\bm k}) \leq \|\widehat f_{\bm k}-\widehat f_{\bm K_n}\|_{\bm x^{\ushort{2\bm c-\bm 1}}}^2+ \widehat{\mathrm{pen}}(\bm k).$$
	Now for every $\bm k'\in \mathcal K_n$ we have 
	$$ \|\widehat f_{\bm k'}-f_{\bm K_n}\|_{\bm x^{\ushort{2\bm c-\bm1}}}^2 = \|\widehat f_{\bm k'}-\widehat f_{\bm K_n}\|_{\bm x^{\ushort{2\bm c-\bm 1}}}^2 + \|\widehat f_{\bm K_n}-f_{\bm K_n}\|_{\bm x^{\ushort{2\bm c-\bm 1}}}^2+2\langle \widehat f_{\bm k'}-\widehat f_{\bm K_n}, \widehat f_{\bm K_n}-f_{\bm K_n} \rangle_{\bm x^{\ushort{2\bm c-\bm 1}}}$$
	which implies that
	\begin{align} \label{eq:decomp:1:2}
	\notag\| \widehat f_{\widehat{\bm k}}-f_{\bm K_n}\|_{\bm x^{\ushort{2\bm c-\bm 1}}}^2&-\|\widehat f_{\bm k}-f_{\bm K_n}\|_{\bm x^{\ushort{2\bm c-\bm 1}}}^2 \\
	\notag&= \|\widehat f_{\widehat{\bm k}}-\widehat f_{\bm K_n}\|_{\bm x^{\ushort{2\bm c-\bm 1}}}^2-\|\widehat f_{\bm k}-\widehat f_{\bm K_n}\|_{\bm x^{\ushort{2\bm c-\bm 1}}}^2 +2\langle \widehat f_{\widehat{\bm k}}- \widehat f_{\bm k}, \widehat f_{\bm K_n}-f_{\bm K_n}\rangle_{\bm x^{\ushort{2\bm c-\bm 1}}}\\
	&\leq \widehat{\mathrm{pen}}(\bm k)- \widehat{\mathrm{pen}}(\widehat{\bm k}) + 2\langle \widehat f_{\widehat {\bm k}}-\widehat f_{\bm k}, \widehat f_{\bm K_n}-f_{\bm K_n} \rangle_{\bm x^{\ushort{2\bm c-\bm 1}}}. 
	\end{align}
	Since 
	$\langle \widehat f_{\widehat{\bm k}}-\widehat f_{\bm k}, \widehat f_{\bm K_n}-f_{\bm K_n}\rangle_{\bm x^{\ushort{2\bm c-\bm 1}}}= \|\widehat f_{\widehat{\bm k}}-f_{\widehat{\bm k}}\|_{\bm x^{\ushort{2\bm c-\bm 1}}} ^2+ \langle f_{\widehat{\bm k}}-f_{\bm k}, \widehat f_{\bm K_n}-f_{\bm K_n}\rangle_{\bm x^{\ushort{2\bm c-\bm 1}}} - \|\widehat f_{\bm k}-f_{\bm k}\|_{\bm x^{\ushort{2\bm c-\bm 1}}} ^2.
$
	we get with \eqref{eq:decomp:1:2} 
	\begin{align}\label{eq:decomp:3:2}
	\notag
	\| \widehat f_{\widehat{\bm k}}-f_{\bm K_n}\|_{\bm x^{\ushort{2\bm c-\bm 1}}}^2 &\leq \|f_{\bm k}-f_{\bm K_n}\|_{\bm x^{\ushort{2\bm c-\bm 1}}}^2 - \|\widehat f_{\bm k}-f_{\bm k}\|_{\bm x^{\ushort{2\bm c-\bm 1}}}^2+2\langle f_{\widehat{\bm k}}-f_{\bm k}, \widehat f_{\bm K_n}-f_{\bm K_n}\rangle_{\bm x^{\ushort{2\bm c-\bm 1}}} \\
	&+ \widehat{\mathrm{pen}}(\bm k)+ 2\|\widehat f_{\widehat {\bm k}}-f_{\widehat{\bm k}}\|_{\bm x^{\ushort{2\bm c-\bm 1}}}^2-\widehat{\mathrm{pen}}(\widehat{\bm k}).
	\end{align}
	Let us study the term $|2\langle f_{\widehat{\bm k}}-f_{\bm k}, \widehat f_{\bm K_n}-f_{\bm K_n}\rangle_{\bm x^{\ushort{2\bm c-\bm 1}}}|$. We remind that $\bm k'\in \mathcal K_n$
	\begin{align*}
	\|\widehat f_{\bm k'}-f_{\bm k'}\|_{\bm x^{\ushort{2\bm c-\bm 1}}}^2 = \frac{1}{(2\pi)^2}\int_{\IR^2} \mathds 1_{[-\bm k', \bm k']}(\bm t)\frac{ |\mathcal M_{\bm c}[f_{\bm Y}](\bm t)-\widehat{\mathcal M}_{\bm c}(\bm t)|^2}{|\mathcal M_{\bm c}[g](\bm t)|^2}d\bm t.
	\end{align*}
	Setting  $A^*:=[-\widehat{\bm k}, \widehat{\bm k}]\cup [-\bm k, \bm k]$ we have $\mathcal M_{\bm c}[f_{\widehat{\bm k}}-f_{ \bm k}]=\mathcal M_{\bm c}[f](\mathds 1_{[-\widehat{\bm k}, \widehat{\bm k}]}- \mathds 1_{[-\bm k, \bm k]})$ and $\mathrm{supp}(\mathcal M_{\bm c}[f_{\widehat {\bm k}}-f_{\bm k}]) \subseteq A^*\subseteq [-\bm K_n, \bm K_n]$. The Cauchy Schwarz inequality and the inequality $2ab\leq a^2+b^2$, for $a,b\in \IR$, implies
	\begin{align*}
	|2\langle f_{\widehat{\bm k}}&-f_{\bm k}, \widehat f_{\bm K_n}-f_{\bm K_n}\rangle_{\bm x^{\ushort{2\bm c-\bm 1}}}| = \frac{2}{(2\pi)^2} \left|\int_{A^*} \mathcal M_{\bm c}[f_{\widehat{\bm k}}-f_{\bm k}](\bm t) \frac{\widehat{\mathcal M}_{\bm c}(-\bm t)-\mathcal M_{\bm c}[f_{\bm Y}](-\bm  t)}{\mathcal M_{\bm c}[f_g](-\bm t)} d\bm t \right| \\
	&\leq  \frac{1}{4}\|f_{\widehat{\bm k}}-f_{\bm k}\|_{\bm x^{\ushort{2\bm c-\bm 1}}}^2 + \frac{4}{(2\pi)^2} \int_{\IR^2} \mathds 1_{A^*}(\bm t)\frac{|\mathcal M_{\bm c}[f_{\bm Y}](\bm t)- \widehat {\mathcal M}_{\bm c}(\bm t)|^2}{|\mathcal M_{\bm c}[g](\bm t)|^2} d\bm t \\
	&\leq \frac{\|f_{\widehat{\bm k}}-f_{\bm K_n}\|_{\bm x^{\ushort{2\bm c-\bm 1}}}^2}{2}+ \frac{\|f_{\bm k}-f_{\bm K_n}\|_{\bm x^{\ushort{2\bm c-\bm 1}}}}{2}+4\|\widehat f_{\bm k}-f_{\bm k}\|_{\bm x^{\ushort{2\bm c-\bm 1}}}^2 + 4\|\widehat f_{\widehat {\bm k}}-f_{\widehat{\bm k}}\|_{\bm x^{\ushort{2\bm c-\bm 1}}}^2
	\end{align*}
	using that $\mathds 1_{A^*} \leq \mathds 1_{A_k}+ \mathds 1_{A_{\widehat k}}$. Thus 
	\begin{align*}
	|2\langle f_{\widehat{\bm k}}-f_{\bm k}, \widehat f_{\bm K_n}-f_{\bm K_n}\rangle_{\bm x^{\ushort{2\bm c-\bm 1}}}| \leq  &\frac{\|f_{\bm K_n}-f_{\bm k}\|_{\bm x^{\ushort{2\bm c-\bm 1}}}^2}{2} + \frac{\|\widehat f_{\widehat {\bm k}}-f_{\bm K_n}\|_{\bm x^{\ushort{2\bm c-\bm 1}}}^2}{2} + 4\|\widehat f_{\bm k}-f_{\bm k}\|_{\bm x^{\ushort{2\bm c-\bm 1}}}^2\\
	&+ \frac{7}{2}\|\widehat f_{\widehat{\bm k}}-f_{\widehat{\bm k}}\|_{\bm x^{\ushort{2\bm c-1}}}^2
	\end{align*}
	which implies with \eqref{eq:decomp:3:2}
	\begin{align*}
	\| \widehat f_{\widehat{\bm k}}-f_{\bm K_n}\|_{\bm x^{\ushort{2\bm c-1}}}^2 \leq 3&\| f_{\bm k}-f_{\bm K_n}\|_{\bm x^{\ushort{2\bm c-1}}}^2 +6 \|\widehat f_{\bm k}-f_{\bm k}\|_{\bm x^{\ushort{2\bm c-1}}}^2  +2\widehat{\mathrm{pen}}(\bm k) \\
	&+11\|\widehat f_{\widehat{\bm k}}-f_{\widehat{\bm k}}\|_{\bm x^{\ushort{2\bm c-1}}}^2
	-2\widehat{\mathrm{pen}}(\widehat{\bm  k})
	\end{align*}
	Since $\E_{f_{\bm Y}}^n(\widehat{\mathrm{pen}}(\bm k))= \mathrm{pen}(\bm k)$ and as $\chi_0\geq 6$ we get
	Now, $
	6\E_{f_{\bm Y}}^n(\|\widehat f_{\bm k}-f_{\bm k}\|_{\bm x^{\ushort{2\bm c-\bm 1}}}^2)\leq \frac{6\mu_Y\Lambda_g(\bm k)}{n}+\int_{[-\bm k, \bm k]} \Var_f^n(\widehat{\sM}_{\bm X}(\bm t)d\bm t \leq \mathrm{pen}(\bm k)+ \int_{[-\bm k, \bm k]} \Var_f^n(\widehat{\sM}_{\bm X}(\bm t))d\bm t 
	$ 
	and
	\begin{align*} \E_{f_{\bm Y}}^n(\|\widehat f_{\widehat{\bm k}}&- f_{\bm K_n}\|_{\bm x^{\ushort{2\bm c-\bm 1}}}^2) \leq 3 \left(\|f_{\bm K_n}-f_{\bm k}\|_{\bm x^{\ushort{2\bm c-\bm 1}}}^2 +\mathrm{pen}(\bm k)\right) + \int_{[-\bm K_n, \bm K_n]} \Var_f^n(\widehat{\sM}_{\bm X}(\bm t))d\bm t\\
	&+11\E_{f_{\bm Y}}^n\left(\|\widehat f_{\widehat{\bm k}}-f_{\widehat{\bm k} }\|_{\bm x^{\ushort{2\bm c-\bm 1}}}^2-\frac{1}{12}\mathrm{pen}(\widehat{\bm k})\right)_+ +\E_{f_{\bm Y}}^n((\mathrm{pen}(\widehat{\bm k})-2\widehat{\mathrm{pen}}(\widehat{\bm k}))_+).
	\end{align*}
	The theorem follows by applying the following two Lemmas and taking the infimum over $\bm k\in \mathcal K_n$.
	\begin{lem}\label{lem:help:1}
		Under the assumptions of Theorem \ref{thm:pco:2} we get
		\begin{align*} \E_{f_{\bm Y}}^n\left(\|\widehat f_{\widehat{\bm k}}-f_{\widehat{\bm k}}\|_{\bm x^{\ushort{2\bm c-\bm 1}}}^2-\frac{1}{12}\mathrm{pen}(\widehat{\bm k})\right)_+ &\leq C(g)\left(\frac{\mu_{\bm X}}{n} + \frac{\E_f(\bm X_1^{\ushort{4(\bm c-\bm 1)}})  }{\mu_{\bm X} }\frac{\log^2(n)}{n}\right.\\ 
		&\hspace*{-1cm}+ \left.\frac{\Var_f^n(\widehat\mu_{\bm X})\log(n)^2}{\mu_{\bm X}}+ \int_{[-\bm K_n, \bm K_n]} \Var_{f}^n(\widehat{\sM}_{\bm X}(\bm t))d\bm t\right).\end{align*}
		where $C(g)>0$ is a positive constant only depending on $g$.
	\end{lem}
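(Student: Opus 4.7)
The plan is to prove a uniform-over-$\bm k$ concentration bound via Talagrand's inequality (Lemma \ref{tal:re}), applied conditionally on the dependent process $(\bm X_j)_{j\in\nset{n}}$ so as to exploit the i.i.d.\ structure of $(\bm U_j)_{j\in\nset{n}}$. First I would dominate the random-cutoff expression by a sum over the grid,
\begin{align*}
\E_{f_{\bm Y}}^n\Bigl(\|\widehat f_{\widehat{\bm k}}-f_{\widehat{\bm k}}\|_{\bm x^{\ushort{2\bm c-\bm 1}}}^2 - \tfrac{1}{12}\mathrm{pen}(\widehat{\bm k})\Bigr)_+ \leq \sum_{\bm k\in\mathcal K_n}\E_{f_{\bm Y}}^n\Bigl(\|\widehat f_{\bm k}-f_{\bm k}\|_{\bm x^{\ushort{2\bm c-\bm 1}}}^2 - \tfrac{1}{12}\mathrm{pen}(\bm k)\Bigr)_+.
\end{align*}
Then decompose the centered empirical Mellin transform as $\widehat{\sM}_{\bm c}(\bm t)-\sM_{\bm c}[f_{\bm Y}](\bm t)=A_n(\bm t)+\sM_{\bm c}[g](\bm t)\bigl(\widehat{\sM}_{\bm X}(\bm t)-\sM_{\bm c}[f](\bm t)\bigr)$ with $A_n(\bm t):=n^{-1}\sum_{j\in\nset{n}}\bm X_j^{\ushort{\bm c-\bm 1+i\bm t}}\bigl(\bm U_j^{\ushort{\bm c-\bm 1+i\bm t}}-\sM_{\bm c}[g](\bm t)\bigr)$. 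Using $(a+b)^2\leq 2a^2+2b^2$ and Plancherel, the contribution of the second summand is exactly $(2\pi^2)^{-1}\int_{[-\bm k,\bm k]}\Var_f^n(\widehat{\sM}_{\bm X}(\bm t))d\bm t$, which is absorbed into the last term on the right-hand side of the claim.

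For the $A_n$-piece, use Parseval to express
\begin{align*}
(4\pi^2)^{-1}\int_{[-\bm k,\bm k]}|A_n(\bm t)|^2|\sM_{\bm c}[g](\bm t)|^{-2}d\bm t=\sup_{h\in\mathcal B_{\bm k}}|\nu_n(h)|^2,
\end{align*}
with $\mathcal B_{\bm k}$ a countable dense subset of $\{h:\|h\|_{\bm x^{\ushort{2\bm c-\bm 1}}}\leq 1,\ \mathrm{supp}(\sM_{\bm c}[h])\subseteq[-\bm k,\bm k]\}$ and $\nu_n(h)$ the centered empirical mean of $\nu_h(\bm X_j,\bm U_j)\propto \int_{[-\bm k,\bm k]}\bm X_j^{\ushort{\bm c-\bm 1+i\bm t}}\bm U_j^{\ushort{\bm c-\bm 1+i\bm t}}\overline{\sM_{\bm c}[h](\bm t)}/\sM_{\bm c}[g](\bm t)d\bm t$. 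Conditional on $(\bm X_j)$ the $(\bm U_j)$ are i.i.d., so Lemma \ref{tal:re} is applicable with Talagrand-parameters that a short Proposition \ref{pr:consis}-type computation bounds by $\Psi_{\bm k}^2\lesssim \widehat\mu_{\bm X}\Lambda_g(\bm k)/n$, $\tau_{\bm k}\lesssim \widehat\mu_{\bm X}\Lambda_g(\bm k)$, and $\psi_{\bm k}\lesssim \sqrt{k_1k_2\Lambda_g(\bm k)}\cdot\max_j|\bm X_j\bm U_j|^{\ushort{\bm c-\bm 1}}$. Choosing $\varepsilon$ so that $2(1+2\varepsilon)\E(\Psi_{\bm k}^2)\leq \mathrm{pen}(\bm k)/12$ for $\chi\geq\chi_0$, Talagrand yields
\begin{align*}
\E\Bigl(\sup_{h\in\mathcal B_{\bm k}}|\nu_n(h)|^2-\tfrac{1}{12}\mathrm{pen}(\bm k)\Bigr)_+\lesssim \tfrac{\tau_{\bm k}}{n}e^{-c n\Psi_{\bm k}^2/\tau_{\bm k}}+\tfrac{\psi_{\bm k}^2}{n^2}e^{-c'n\Psi_{\bm k}/\psi_{\bm k}}.
\end{align*}

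The first tail sums, geometrically over $\mathcal K_n$, to the $\mu_{\bm X}/n$ term. For the second tail, the super-smooth identity $|\sM_{\bm c}[g](\bm t)|^{-2}\asymp\cosh^2(\pi t_1)\cosh^2(\pi t_2)$ gives $\psi_{\bm k}^2\asymp k_1k_2\Lambda_g(\bm k)e^{\pi(k_1+k_2)}\cdot\max_j|\bm X_j\bm U_j|^{2\ushort{\bm c-\bm 1}}$; the extra $k_1k_2$ factor that overestimates the variance in $\mathrm{pen}(\bm k)$ is precisely what is needed to force $n\Psi_{\bm k}/\psi_{\bm k}\gtrsim\sqrt{k_1k_2\log n}$, making the tail summable against $|\mathcal K_n|\lesssim\log^2n$ and producing the $\log^2(n)/n$ contributions, while the $\max_j$ is controlled by $\E_f(\bm X_1^{\ushort{4(\bm c-\bm 1)}})$. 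Finally, passing from the conditional threshold (in $\widehat\mu_{\bm X}$) to the unconditional statement (in $\mu_{\bm X}$) is handled by splitting on $\{\widehat\mu_{\bm X}\geq\mu_{\bm X}/2\}$ and its complement; the latter event contributes $\Var_f^n(\widehat\mu_{\bm X})\log^2(n)/\mu_{\bm X}$ by Chebyshev.

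The main obstacle will be the super-smooth regime: because $\psi_{\bm k}$ grows exponentially in $k_1+k_2$, the standard oracle penalty is insufficient and one really needs the $k_1k_2$ overestimation in $\mathrm{pen}(\bm k)$ to guarantee summability of Talagrand's second tail across $\mathcal K_n$. A secondary delicate point is that Talagrand is applied only conditionally on the $\beta$-mixing process $(\bm X_j)$, so the Talagrand parameters are themselves random through $\widehat\mu_{\bm X}$ and through $\max_j|\bm X_j|^{\ushort{\bm c-\bm 1}}$; controlling these by $\mu_{\bm X}$, $\E_f(\bm X_1^{\ushort{4(\bm c-\bm 1)}})$, and $\Var_f^n(\widehat\mu_{\bm X})$ on the RHS requires careful moment bookkeeping, and is the source of the three different $\log^2(n)$-type terms in the claim.
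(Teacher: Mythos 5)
Your overall architecture coincides with the paper's: bound the random cut-off by a sum over $\mathcal K_n$, split the centred empirical Mellin transform into the conditionally centred part $A_n(\bm t)=\widehat{\sM}_{\bm c}(\bm t)-\E_{|\bm X}(\widehat{\sM}_{\bm c}(\bm t))$ and the dependency part $\sM_{\bm c}[g](\bm t)\bigl(\widehat{\sM}_{\bm X}(\bm t)-\sM_{\bm c}[f](\bm t)\bigr)$, send the latter into $\int_{[-\bm K_n,\bm K_n]}\Var_f^n(\widehat{\sM}_{\bm X}(\bm t))d\bm t$, apply Talagrand conditionally on $(\bm X_j)_j$ with $\Psi^2\asymp\mu_{\bm U}\widehat\mu_{\bm X}\Lambda_g(\bm k)/n$ and the crude $\tau=n\Psi^2$, use the extra $k_1k_2$ in $\mathrm{pen}(\bm k)$ to absorb a Talagrand slack $\varepsilon\propto k_1k_2$ and make the tails summable over $\mathcal K_n$, and pass from the random threshold in $\widehat\mu_{\bm X}$ to the deterministic one via the event $\{\widehat\mu_{\bm X}\ge\mu_{\bm X}/2\}$ and a Chebyshev bound yielding $\Var_f^n(\widehat\mu_{\bm X})\log^2(n)/\mu_{\bm X}$. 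All of this matches the paper's proof.

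There is, however, one step that fails as written: your choice $\psi_{\bm k}\lesssim\sqrt{k_1k_2\Lambda_g(\bm k)}\cdot\max_j|\bm X_j\bm U_j|^{\ushort{\bm c-\bm 1}}$ is not an admissible Talagrand parameter. Lemma \ref{tal:re} requires a bound on $\sup_{h\in\sH}\sup_{z\in\mathcal Z}|\nu_h(z)|$, i.e.\ a supremum over \emph{all} possible values of the noise $\bm U$, not over the realised sample; since $\bm u\mapsto\bm u^{\ushort{\bm c-\bm 1}}$ is unbounded on $\IRp^2$ for $\bm c\neq\bm 1$, no finite such $\psi$ exists without truncation, and a sample-dependent $\psi$ cannot be inserted into the deviation bound \eqref{tal:re1}. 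The paper resolves this by splitting $\bar\nu_{h,in}=\bar\nu_{h,1}+\bar\nu_{h,2}$ with the indicator $\Ii_{(0,c_n)}(\bm Y_j^{\ushort{\bm c-\bm 1}})$, $c_n\propto\sqrt{n\mu_{\bm U}\widehat\mu_{\bm X}}$, applying Talagrand only to the truncated part (so $\psi^2=c_n^2\Lambda_g(\bm k)$ is conditionally deterministic and $n\Psi/\psi$ is of constant order), and bounding the tail part $\bar\nu_{h,2}$ by a separate Markov argument with $p=2$ on the event $\Xi_{\bm X}=\{\widehat\mu_{\bm X}\ge\mu_{\bm X}/2\}$; it is this tail term, not the Talagrand $\psi$, that produces the $\E_f(\bm X_1^{\ushort{4(\bm c-\bm 1)}})\log^2(n)/(\mu_{\bm X}n)$ contribution (and one more $\Var_f^n(\widehat\mu_{\bm X})$ piece on $\Xi_{\bm X}^c$). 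You have correctly identified this as the delicate point and the fourth-moment assumption as the relevant control, so the gap is repairable by the standard truncation device, but as proposed the Talagrand application is not valid for general $\bm c$ (only the case $\bm c=\bm 1$, where $\bm Y_j^{\ushort{\bm c-\bm 1}}\equiv 1$, would go through without it).
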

	\begin{lem}\label{lem:help:2}
		Under the assumptions of Theorem \ref{thm:pco:2} we get
		$$\E_{f_{\bm Y}}^n((\mathrm{pen}(\widehat{\bm k})-2\widehat{\mathrm{pen}}(\widehat{\bm k}))_+) \leq C(\chi, \mu_{\bm Y}, \E_{f_{\bm Y}}(\bm Y_1^{\ushort{4(\bm c-\bm 1)}}))\log(n)^2(n^{-1} + \Var_f ^n(\widehat\mu_{\bm X}))$$
		where $C(\chi, \mu_{\bm Y}, \E_{f_{\bm Y}}(\bm Y_1^{\ushort{4(\bm c-\bm 1)}}))>0$ is a  constant dependent on $\chi, \mu_{\bm Y},\E_{f_{\bm Y}}(\bm Y_1^{\ushort{4(\bm c-\bm 1)}})$.
	\end{lem}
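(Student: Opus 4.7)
The plan is to reduce everything to a bound on $\E((\mu_{\bm Y}-2\widehat{\mu}_{\bm Y})_+)$ after factoring out the deterministic penalty weight, and then to control the variance of $\widehat{\mu}_{\bm Y}$ by conditioning on $(\bm X_j)_{j\in\nset{n}}$. First, since $\mathrm{pen}(\bm k)=\chi \mu_{\bm Y}k_1k_2\Lambda_g(\bm k)/n$ and $\widehat{\mathrm{pen}}(\bm k)=\chi \widehat{\mu}_{\bm Y}k_1k_2\Lambda_g(\bm k)/n$, we have the pointwise identity
\begin{equation*}
\bigl(\mathrm{pen}(\widehat{\bm k})-2\widehat{\mathrm{pen}}(\widehat{\bm k})\bigr)_+
=\chi\,\widehat k_1\widehat k_2\,\frac{\Lambda_g(\widehat{\bm k})}{n}\,(\mu_{\bm Y}-2\widehat{\mu}_{\bm Y})_+.
\end{equation*}
Since $\widehat{\bm k}\in\mathcal K_n$, the deterministic factor is bounded by $\chi\log^2(n)$, which already produces the $\log^2(n)$ in the target bound.

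Next I would bound $\E((\mu_{\bm Y}-2\widehat{\mu}_{\bm Y})_+)$ by noting that the positive part vanishes off the event $\{\widehat{\mu}_{\bm Y}\leq \mu_{\bm Y}/2\}\subseteq\{|\widehat{\mu}_{\bm Y}-\mu_{\bm Y}|\geq \mu_{\bm Y}/2\}$, on which moreover $(\mu_{\bm Y}-2\widehat{\mu}_{\bm Y})_+\leq 2|\widehat{\mu}_{\bm Y}-\mu_{\bm Y}|$. Cauchy–Schwarz then gives
\begin{equation*}
\E\bigl((\mu_{\bm Y}-2\widehat{\mu}_{\bm Y})_+\bigr)
\leq 2\,\Var(\widehat{\mu}_{\bm Y})^{1/2}\IP\bigl(|\widehat{\mu}_{\bm Y}-\mu_{\bm Y}|\geq \mu_{\bm Y}/2\bigr)^{1/2}
\leq \frac{4\,\Var(\widehat{\mu}_{\bm Y})}{\mu_{\bm Y}}
\end{equation*}
by Markov. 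This is where the $1/\mu_{\bm Y}$ factor (absorbed in the constant $C(\chi,\mu_{\bm Y},\cdot)$) and the need for a fourth-moment assumption enter: we must have $\Var(\widehat{\mu}_{\bm Y})<\infty$, which requires $\E_{f_{\bm Y}}(\bm Y_1^{\ushort{4(\bm c-\bm 1)}})<\infty$, an assumption that Theorem \ref{thm:pco:2} indeed imposes.

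It remains to unpack $\Var(\widehat{\mu}_{\bm Y})$ into the claimed pieces. Writing
\begin{equation*}
\widehat{\mu}_{\bm Y}=\E_g(\bm U_1^{\ushort{2(\bm c-\bm 1)}})\,\widehat{\mu}_{\bm X}
+\frac{1}{n}\sum_{j\in\nset{n}}\bm X_j^{\ushort{2(\bm c-\bm 1)}}\bigl(\bm U_j^{\ushort{2(\bm c-\bm 1)}}-\E_g(\bm U_1^{\ushort{2(\bm c-\bm 1)}})\bigr),
\end{equation*}
and exploiting that $(\bm U_j)$ is i.i.d.\ and independent of $(\bm X_j)$, the two summands are uncorrelated. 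The first contributes $\E_g(\bm U_1^{\ushort{2(\bm c-\bm 1)}})^2\,\Var_f^n(\widehat{\mu}_{\bm X})$, and conditioning on $(\bm X_j)$ in the second gives
\begin{equation*}
\Var\!\Bigl(\tfrac{1}{n}\sum_j\bm X_j^{\ushort{2(\bm c-\bm 1)}}(\bm U_j^{\ushort{2(\bm c-\bm 1)}}-\E_g(\bm U_1^{\ushort{2(\bm c-\bm 1)}}))\Bigr)
=\frac{\Var_g(\bm U_1^{\ushort{2(\bm c-\bm 1)}})}{n}\,\E_f(\bm X_1^{\ushort{4(\bm c-\bm 1)}}).
\end{equation*}
Rewriting $\E_f(\bm X_1^{\ushort{4(\bm c-\bm 1)}})=\E_{f_{\bm Y}}(\bm Y_1^{\ushort{4(\bm c-\bm 1)}})/\E_g(\bm U_1^{\ushort{4(\bm c-\bm 1)}})$ absorbs the remaining $g$-dependence into the constant, yielding $\Var(\widehat{\mu}_{\bm Y})\leq C(g,\E_{f_{\bm Y}}(\bm Y_1^{\ushort{4(\bm c-\bm 1)}}))(n^{-1}+\Var_f^n(\widehat{\mu}_{\bm X}))$. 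Combining with the two previous displays gives the stated bound. The only mild subtlety is ensuring the covariance between the two summands above really vanishes despite the dependency of $(\bm X_j)$; this is where we use independence of $(\bm U_j)$ from the entire process $(\bm X_j)$, so that $\E$ conditional on $(\bm X_j)$ kills the centred $\bm U$-part pointwise.
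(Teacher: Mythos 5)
Your proposal is correct and follows essentially the same route as the paper: bound the deterministic factor $\widehat k_1\widehat k_2\Lambda_g(\widehat{\bm k})/n$ by $\log^2(n)$ using the definition of $\mathcal K_n$, reduce $(\mu_{\bm Y}-2\widehat\mu_{\bm Y})_+$ to $\Var(\widehat\mu_{\bm Y})/\mu_{\bm Y}$ via the deviation event $\{|\widehat\mu_{\bm Y}-\mu_{\bm Y}|\geq\mu_{\bm Y}/2\}$, and then split $\Var(\widehat\mu_{\bm Y})$ by conditioning on $(\bm X_j)_j$ into $\E_{f_{\bm Y}}(\bm Y_1^{\ushort{4(\bm c-\bm 1)}})n^{-1}$ plus $\E_g(\bm U_1^{\ushort{2(\bm c-\bm 1)}})^2\Var_f^n(\widehat\mu_{\bm X})$, exactly as the paper does by analogy with Proposition \ref{pr:consis}. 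The only cosmetic difference is that you pass through Cauchy--Schwarz and Chebyshev where the paper uses the pointwise bound $\mathds 1_{\Omega^c}\leq 2|\widehat\mu_{\bm Y}-\mu_{\bm Y}|/\mu_{\bm Y}$ directly; both give the same order of constant.
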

\end{proof}
\begin{proof}[Proof of Lemma \ref{lem:help:1}]
	First we see that 
	$$ \E_{f_{\bm Y}}^n\left(\|\widehat f_{\widehat{\bm k}}-f_{\widehat{\bm k}}\|_{\bm x^{\ushort{2\bm c-\bm 1}}}^2-\frac{1}{12}\mathrm{pen}(\widehat{\bm k})\right)_+ \leq \E_{f_{\bm Y}}^n\left(\max_{k\in \mathcal K_n}\left(\|\widehat f_{\bm k}-f_{\bm k}\|_{\bm x^{\ushort{2\bm c-\bm 1}}}^2-\frac{1}{12}\mathrm{pen}(\bm k)\right)_+\right).$$
	Defining $B_{\bm k}:=\{h\in S_{\bm k}: \|h\|_{\bm x^{\ushort{2\bm c-\bm 1}}}=1\}$ and $\overline{\nu}_h:=\langle \widehat f_{\bm k}-f_{\bm k}, h\rangle_{\bm x^{\ushort{2\bm c-\bm 1}}}, h\in B_{\bm k}$, we have $\|\widehat f_{\bm k}-f_{\bm k}\|_{\bm x^{\ushort{2\bm c-\bm 1}}}^2 = \sup_{h\in B_{\bm k}} \overline{\nu}_h^2$. Further, we decompose $\overline{\nu}_h$ into $\bar\nu_h =\bar\nu_{h,in}+\bar\nu_{h,de}$, where
	\begin{align*}
	\bar\nu_{h,in}:= \frac{1}{n} \sum_{j\in \nset{n}} (\nu_h(\bm Y_j)- \IE_{|\bm X}(\nu_h(\bm Y_j))), \,\nu_h(\bm Y_j):= \frac{1}{4\pi^2} \int_{[-\bm k, \bm k]} \frac{\bm Y_j^{\ushort{\bm c-\bm 1+i\bm t}}}{\sM_{\bm c}[g](\bm t)} \sM_{\bm c}[h](-\bm t)d\bm t.
	\end{align*}
	and $\overline \nu_{h, de}= n^{-1} \sum_{j\in \nset{n}} \E_{|\bm X}(\nu_h(\bm Y_j))- \E_{f_Y}(\nu_h(\bm Y_j))$. 
	Thus 
	\begin{align*}\E_{f_{\bm Y}}^n\left(\|\widehat f_{\widehat{\bm k}}-f_{\widehat{\bm k}}\|_{\bm x^{\ushort{2\bm c-\bm 1}}}^2-\frac{1}{12}\mathrm{pen}(\widehat{\bm k})\right)_+ \leq 2& \E_{f_{\bm Y}}^n\left(\max_{\bm k\in \mathcal K_n}\left(\overline{\nu}_{h, in}^2-\frac{1}{24}\mathrm{pen}(\bm k)\right)_+ \right) \\
	&+ 2 \E_{f_{\bm Y}}^n\left(\max_{\bm k\in \mathcal K_n}\overline{\nu}_{h, de}^2 \right)=: I_1+I_2
	\end{align*}
	\underline{For the term $I_2$:} $\IE_{|\bm X}(\bm Y_j^{\ushort{\bm c-\bm 1+i\bm t}})- \IE_{f_{\bm Y}}(\bm Y_j^{\ushort{\bm c-\bm 1+i\bm t}}) = \sM_{\bm c}[g](\bm t)(\bm X_j^{\ushort{\bm c-\bm 1+i\bm t}}-\IE_f(\bm X_1^{\ushort{\bm c-\bm 1+i\bm t}})$ implies  for any $\bm k\in \mathcal K_n$ and any $h\in B_{\bm k}$,
	\begin{align*}
	|\overline{\nu}_{h,de}|
	= \frac{|\langle \widehat \sM_{\bm X}-\E_f^n(\widehat \sM_{\bm X}), \sM_{\bm c}[h] \rangle_{\IR^2}|}{4\pi^2} \leq \frac{\|\mathds 1_{[-\bm k, \bm k]} (\widehat\sM_{\bm X}- \E_f^n(\widehat\sM_{\bm X})\|_{\IR^2}}{2\pi},
	\end{align*}
	using the Cauchy-Schwarz inequality and $\|\sM_c[h]\|_{\IR^2}=2\pi\|h\|_{\bm x^{\ushort{2\bm c-\bm 1}}}\leq2\pi.$ Thus $$\IE_{f_{\bm Y}}^n( \max_{\bm k\in \mathcal K_n}\sup_{h\in B_{\bm k}} \bar{\nu}_{h,de}^2) \leq  \frac{1}{(2\pi)^2}\int_{[-\bm K_n, \bm K_n]} \Var_{f}^n(\widehat{\sM}_{\bm X}(\bm t))d\bm t.$$
	\underline{Next for $I_1$,} we decompose the process again to be able to apply the Talagrand inequality, \ref{tal:re}. To do so, let us define $\widetilde{\mathrm{pen}}(\bm k):=\chi\mu_{\bm U}\widehat\mu_{\bm X} \bm k^{\ushort{\bm 1}}\Lambda_g(\bm k)n^{-1}$. Then, 
	\begin{align*}
	\left(\sup_{h\in B_{\bm k}} \overline\nu_{h, in}^2-\frac{1}{24}\mathrm{pen}(\bm k)\right)_+ =\left(\sup_{h\in B_{\bm k}} \overline\nu_{h, in}^2-\frac{1}{36}\widetilde {\mathrm{pen}}(\bm k)\right)_+ +\frac{1}{24} (\frac{2}{3}\widetilde{\mathrm{pen}}(\bm k)-\mathrm{pen}(\bm k))_+.
	\end{align*}
	For the second summand, let us define $\Omega_{\bm X}:=\{|\widehat\mu_{\bm X}-\mu_{\bm X}|\leq \mu_{\bm X}/2\}$.  Then on $\Omega_{\bm X}$ we have $\widehat\mu_{\bm X} \leq 3\mu_{\bm X}/2$ and thus
	\begin{align*}
	\IE_{f_{\bm Y}}^n(\max_{\bm k\in \mathcal K_n}& (\widetilde{\mathrm{pen}}(\bm k)-\mathrm{pen}(\bm k))_+) \leq \chi (\bm K_n)^{\ushort{\bm 1}}\mu_{\bm U} \IE_f^n((\frac{2}{3}\widehat \mu_{\bm X}-\mu_{\bm X})_+\mathds 1_{\Omega_{\bm X}^c}) \\
	&= C(\chi, \mu_{\bm U}) \log^2(n)\E_f^n(|\widehat \mu_{\bm X}-\mu_{\bm X}| \mathds 1_{\Omega^c_{\bm X}}))
	\leq C(\chi, \mu_{\bm U}, \mu_{\bm X}) \log^2(n) \Var_f^n(\widehat\mu_{\bm X}),
	\end{align*}
	since $|\widehat\mu_{\bm X}- \mu_{\bm X}| \mathds 1_{\Omega_{\bm X}^c} \leq 2|\widehat\mu_{\bm X}- \mu_{\bm X}|^2 \mu_{\bm X}^{-1}$. 
	For the first summand we see
	\begin{align*}
	\IE_{f_Y}^n(\max_{\bm k\in \mathcal K_n}(\sup_{h\in B_{\bm k}} \bar{\nu}_{h,in}^2-\frac{1}{36}\widetilde{\mathrm{pen}}(\bm k))_+) =\IE_f^n(\IE_{|\bm X}(\max_{\bm k\in \mathcal K_n}(\sup_{h\in B_{\bm k}} \bar{\nu}_{h,in}^2-\frac{1}{36}\widetilde{\mathrm{pen}}(\bm k))_+)).
	\end{align*}
	Thus we start by considering the inner conditional expectation to bound the term. By the construction of $\bar\nu_{h,in}$, its summands conditioned on $\sigma(\bm X_i,i\geq 0)$ are independent but not identically distributed. We are aiming to apply the Talagrand inequality, Lemma \ref{tal:re}. We therefore split, for a sequence $(c_n)_{n\in \IN}$ specified afterwards, the process again in the following way
	\begin{align*}\bar\nu_{h,1}:= &n^{-1} \sum_{j\in \nset{n}} \nu_h(\bm Y_j) \Ii_{(0,c_n)}(\bm Y_j^{\ushort{\bm c-\bm 1}})-\IE_{|\bm X}(\nu_h(\bm Y_1) \Ii_{(0,c_n)}(\bm Y_1^{\ushort{\bm c-\bm 1}})) \\
	&\text{ and }\bar\nu_{h,2}:=  n^{-1} \sum_{j\in \nset{n}} \nu_h(\bm Y_j) \Ii_{(c_n,\infty)}(\bm Y_j^{\ushort{\bm c-\bm 1}})-\IE_{|\bm X}(\nu_h(\bm Y_1) \Ii_{(c_n,\infty)}(\bm Y_1^{\ushort{\bm c-\bm 1}}))\end{align*}
	to get
	\begin{align*}
	\IE_{|\bm X}(\max_{\bm k\in \mathcal K_n}(\sup_{h\in B_{\bm k}} |\bar{\nu}_{h,in}|^2-&\frac{1}{36}\widetilde {\mathrm{pen}}(\bm k))_+)\leq 2\IE_{|\bm X}(\max_{\bm k\in \mathcal K_n} (\sup_{h\in B_{\bm k}} |\bar{\nu}_{h,1}|^2- \frac{1}{72}\widetilde{\mathrm{pen}}(\bm k))_+)\\
	&+2\E_{|\bm X}(\max_{\bm k\in \mathcal K_n} \sup_{h\in B_{\bm k}} |\overline \nu_{h,2}|^2)
	=: M_1+M_2
	\end{align*}
	where we will now consider the two summands $M_1, M_2$ separately.\\
	To bound the $M_1$ term we will use the Talagrand inequality \ref{tal:re}. Indeed, we have
	\begin{align*}
	M_1 \leq \sum_{\bm k \leq \bm K_n}\IE_{|\bm X}(\sup_{t\in B_{\bm k}} |\bar{\nu}_{h,1}|^2-\frac{1}{72}\widetilde{\mathrm{pen}}(\bm k ))_+,
	\end{align*}
	which will be used to show the claim. We want to emphasize that we are able to apply the Talagrand inequality on the sets $B_k$ since $B_k$ has a dense countable subset and due to continuity arguments.
	Further, we see that the random variables $\nu_h(\bm Y_j)\Ii_{(0,c_n)}(\bm Y_j^{\ushort{\bm c-\bm 1}})- \IE_{|\bm X}(\nu_h(\bm Y_j)\Ii_{(0,c_n)}(\bm Y_j^{\ushort{\bm c-\bm 1}}))$, $j\in \nset{n}$, are conditioned on $\sigma(\bm X_i,i\geq 0)$, centered and independent but not identically distributed. 
	In order to apply Talagrand's inequality, we need to find the constants $\Psi, \psi, \tau$ such that
	\begin{align*}
	\sup_{h\in B_{\bm k}} \sup_{\bm y \in \IRp^2} &|\nu_h(\bm y)\Ii_{(0,c_n)}(\bm y^{\ushort{\bm c-\bm 1}})|\leq \psi; \quad \IE_{|\bm X}(\sup_{h\in B_{\bm k}} |\bar{\nu}_{h,1}|)\leq \Psi; \\
	\quad &\sup_{h\in B_{\bm k}} \frac{1}{n} \sum_{j\in \nset{n}} \Var_{|\bm X}(\nu_h(\bm Y_j)\Ii_{(0,c_n)}(\bm Y_j^{\ushort{\bm c-\bm 1}})) \leq \tau.
	\end{align*}
	We start with $\Psi^2$. Let us define $\widetilde{\mathcal M}_{\bm c}(\bm t):= n^{-1} \sum_{j\in \nset{n}} \bm Y_j^{\bm c-\bm 1+i\bm t}\Ii_{(0,c_n)}(\bm Y_j^{\bm c-\bm 1})$ as an unbiased estimator of $\sM_{\bm c}[f_{\bm Y}\Ii_{(0,c_n)}(\bm y^{\ushort{\bm c-\bm 1}})](\bm t)$ and 
	\begin{align*}
	\widetilde f_{\bm k}(x):= \frac{1}{(2\pi)^2} \int_{[-\bm k, \bm k]}\bm x^{\ushort{-\bm c-i\bm t}}\frac{\widetilde {\mathcal M}(\bm t)}{\sM_{\bm c}[g](\bm t)}d\bm t 
	\end{align*}
	where $n^{-1} \sum_{j\in \nset{n}} \nu_h(\bm Y_j) \Ii_{(0,c_n)}(\bm Y_j^{\ushort{\bm c-\bm 1}})=\langle \widetilde f_{\bm k}, h \rangle_{\bm x^{\ushort{2\bm c-\bm 1}}}.$
	Thus, we have for any $h\in B_{\bm k}$ that $\bar \nu_{h,1}^2 = \langle h, \widetilde f_{\bm k} - \IE_{|X}(\widetilde f_{\bm k}) \rangle_{\bm x^{\ushort{2\bm c-\bm 1}}}^2\leq \| h\|_{\bm x^{\ushort{2\bm c-\bm 1}}}^2 \|\widetilde f_{\bm k}- \IE_{|X}(\widetilde f_{\bm k})\|_{\bm x^{\ushort{2\bm c-\bm 1}}}^2$. Since $\|h\|_{\bm x^{\ushort{2\bm c-\bm 1}}}\leq1$, we get 
	\begin{align*}
	\IE_{|\bm X}( \sup_{h\in B_{\bm k}} \bar\nu_{h,1}^2 ) \leq \IE_{|\bm X}(\|\widetilde f_{\bm k} - \IE_{|\bm X}(\widetilde f_{\bm k})\|^2) =\frac{1}{2\pi} \int_{[-\bm k, \bm k]}\frac{\IE_{|\bm X}(|\widetilde \sM(\bm t)- \IE_{|X}(\widetilde \sM(\bm t))|^2)}{|\sM_{\bm c}[g](\bm t)|^2} d\bm t.
	\end{align*}
	Now since $\bm Y_j^{\ushort{\bm c-\bm 1+i\bm t}}\Ii_{(0,c_n)}(\bm Y_j^{\ushort{\bm c-\bm 1}})-\IE_{|\bm X}(\bm Y_j^{\ushort{\bm c-\bm 1+i\bm t}}\Ii_{(0,c_n)}(\bm Y_j^{\ushort{\bm c-\bm 1}})$ are independent conditioned on $\sigma(\bm X_i:i\geq 0)$ we obtain
	\begin{align*}
	\IE_{|\bm X}(|\widetilde \sM(\bm t)- \IE_{|\bm X}(\widetilde \sM(\bm t))|^2) \leq  \frac{1}{n^2} \sum_{j\in \nset{n}} \IE_{|\bm X}(\bm Y_j^{\ushort{2(\bm c-\bm 1)}}\Ii_{(0,c_n)}(\bm Y_j^{\ushort{\bm c-\bm 1}}))=\frac{\mu_{\bm U}}{n} \widehat\mu_{\bm X},
	\end{align*}
	which motivates the choice
	$
	\IE_{|\bm X}( \sup_{h\in B_{\bm k}} \bar\nu_{h,1}^2 ) \leq \mu_{\bm U }\widehat\mu_{\bm X} \Lambda_g(\bm k)n^{-1}=:\Psi^2.\\
	$
	Next we consider $\psi$. Let $\bm y\in \IRp^2$ and $h\in B_{k}$. Then using the Cauchy-Schwarz inequality, $|\nu_h(\bm y)\Ii_{(0,c_n)}(\bm y^{\ushort{\bm c-\bm 1}})|^2= (2\pi)^{-4} c_n^2|\int_{[-\bm k, \bm k]} \bm y^{\ushort{i\bm t}} \frac{\sM_{\bm c}[h](-\bm t)}{\sM_{\bm c}[g](\bm t)} d\bm t |^2 \leq (2\pi)^{-2} c_n^2\int_{[-\bm k, \bm k]}  |\sM_{\bm c}[g](\bm t)|^{-2}d\bm t\leq  c_n^2\Lambda_g(\bm k)=: \psi^2$ since $|\bm y^{\ushort{i\bm t}}|=1$ for all $\bm t\in \IR^2$.\\
	For $\tau$ we use the crude bound $\tau=n \Psi^2$.
	Hence, we have $\frac{n\Psi^2}{\tau}= 1$ and  $\frac{n\Psi}{\psi}=\frac{\sqrt{\sigma_U\widehat\sigma_Xn}}{c_n}$ and get 
	\begin{align*}
\IE_{|\bm X}\big(\sup_{h \in B_{\bm k} }\bar \nu_{h,1}^2 &-2(1+2\varepsilon) \mu_{\bm U} \widehat{\mu}_{\bm X} \frac{\Lambda_g(\bm k)}{n}\big)_+  \leq \frac{C}{n}\left( \widehat \mu_{\bm X}\mu_{\bm U}\Lambda_g(\bm k) \exp(-K_1\varepsilon) \right.\\
	&+\left.  \frac{\Lambda_g(\bm k) c_n^2}{n} \exp(-K_2 C_{\varepsilon} \sqrt{\varepsilon} \sqrt{\mu_{\bm U} \widehat{\mu}_{\bm X} n} c_n^{-1})\right).
	\end{align*}
	Choosing now $\varepsilon = 4\alpha_1\alpha_2 \bm k^{\ushort{\bm 1}}/K_1$ we get applying assumption \textbf{[G1]} for $\bm k\geq \bm k_g$ that $ \Lambda_g(\bm k) \exp(-K_1\varepsilon) \leq C_g\bm k^{\ushort{2 \bm \gamma}} \exp(-\bm \alpha^T \bm k)$
	which is summable over $\IN^2$. Next for $\bm k^{\ushort{\bm 1}} \geq \bm k_{g}$ we get $C_{\varepsilon}\sqrt{\varepsilon} \geq \varepsilon/2$
	and choosing $c_n:= \sqrt{n\mu_{\bm U}\widehat{\mu}_{\bm X}} 2K_2/K_1$ leading to 
	\begin{align*}
\IE_{|\bm X}\big(\sup_{h \in B_{\bm k} }\bar \nu_{h,1}^2 -2(1+2\varepsilon) \mu_{\bm U} \widehat{\mu}_{\bm X} \frac{\Lambda_g(\bm k)}{n}\big)_+  
	&\leq\frac{C_g}{n}\widehat \mu_{\bm X} \mu_{\bm U} \left( \bm k^{\ushort{2\bm\gamma}} e^{-\bm \alpha^T\bm k} + \Lambda_g(\bm k) e^{-K_1 \varepsilon}\right) \\
	&\leq \frac{C_g}{n} \widehat\mu_{\bm X} \mu_{\bm U}  \bm k^{\ushort{2\bm\gamma}} e^{-\bm \alpha^T\bm k}.
	\end{align*}
	Hence, there exists a $\chi_0>0$ such that for all $\chi>\chi_0$ holds $\frac{1}{72}\widetilde{\mathrm{pen}}(\bm k) \geq 2(1+4\alpha_1\alpha_2 \bm k^{\ushort{\bm 1}})/K_1)\mu_{\bm U} \widehat\mu_{\bm X} \Lambda_g(\bm k)n^{-1}$ implying
	\begin{align*}
	\sum_{\bm k \leq \bm K_n}\IE_{|\bm X}(\sup_{t\in B_{\bm k}} |\bar{\nu}_{h,1}|^2-\frac{1}{72}\widetilde{\mathrm{pen}}(\bm k ))_+ \leq \frac{C_g}{n}\widehat{\mu}_{\bm X} \mu_{\bm U} 
	\sum_{\bm k \leq \bm K_n} \bm k^{\ushort{2\bm \gamma}} \exp(-\bm \alpha^T \bm T) \leq \frac{C(g)\widehat\mu_{\bm X}}{n}.
	\end{align*}
	Now, we consider $M_2$. Let us define $\overline f_{\bm k}:= \widehat f_{\bm k}-\widetilde f_{\bm k}$. Then from $\overline{\nu}_{h,2}= \overline{\nu}_{h,in}- \overline\nu_{h,1}$ we deduce $\overline\nu_{h,2}^2 =\langle \overline{f}_{\bm k}-\IE_{|\bm X}(\overline f_{\bm k}), h\rangle^2_{\bm x^{\ushort{2\bm c-\bm 1}}}\leq \| \overline{f}_{\bm k}- \IE_{|\bm X}(\overline f_{\bm k}) \|_{\bm x^{\ushort{2\bm c-\bm 1}}}^2$ for any $h\in B_{\bm k}$. Further, $$\max_{\bm k\in \mathcal K_n}\| \overline{f}_{\bm k}- \IE_{|\bm X}(\overline f_{\bm k}) \|_{\bm x^{\ushort{2\bm c-\bm 1}}}^2 \leq \sum_{\bm k\in \mathcal K_n} \| \overline{f}_{\bm k}- \IE_{|\bm X}(\overline f_{\bm k}) \|_{\bm x^{2\bm c-\bm 1}}^2$$ and for each summand $\bm k\in \mathcal K_n$ we have
	\begin{align*}
	\IE_{|\bm X}(\| \overline{f}_{\bm k}- \IE_{|\bm X}(\overline f_{\bm k})) \|_{\bm x^{\ushort{2\bm c-\bm 1}}}^2) &= \frac{1}{(2\pi)^2} \int_{[-\bm k, \bm k]} \frac{\Var_{|\bm X}(\widehat{\mathcal M}(\bm t)- \widetilde{\mathcal M}(\bm t))}{ \sM_{\bm c}[g](\bm t)|^{2}} d\bm t \\
	&\leq\frac{1}{ n^2} \sum_{j=1}^n \IE_{|\bm X}(\bm Y_j^{\ushort{2(\bm c-\bm 1)}} \Ii_{(c_n, \infty)}(\bm Y_j^{\ushort{\bm c-\bm 1}})) \Lambda_g(\bm k). 
	\end{align*}
	Let us define the event $\Xi_{\bm X}:=\{\widehat\mu_{\bm X}\geq \mu_{\bm X}/2\}$. Then, we have
	\begin{align*}
	\frac{1}{ n^2} \sum_{j\in \nset{n}} \IE_{|\bm X}(\bm Y_j^{\ushort{2(\bm c-\bm 1)}} \Ii_{(c_n, \infty)} (\bm Y_j^{\ushort{(\bm c-\bm 1)}})) \Lambda_g(\bm k) \leq  \frac{C_g}{ nc_n^p} \sum_{j\in \nset{n}} \bm X_j^{\ushort{(2+p)(\bm c-\bm 1)}} \IE_g(\bm U_j^{\ushort{(2+p)(\bm c-\bm 1)}})
	\end{align*}
	where on $\Xi_{\bm X}$ we can state that $c_n^{-p} = C(g)n^{-p/2} (\widehat\mu_{\bm X})^{-p/2}  \leq C(g)\mu_{\bm X}^{-p/2} n^{-p/2} $.
	Then $p=2$ leads to $\IE_{|\bm X}(\| \overline{f}_{\bm K_n}- \IE_{|\bm X}(\overline f_{\bm K_n})) \|_{\bm x^{\ushort{2\bm c-\bm1}}}^2) \Ii_{\Xi_{\bm X}} \leq \frac{C(g)}{n^2\mu_{\bm X}} \IE_g(\bm U_1^{\ushort{4(\bm c-\bm 1)}}) \sum_{j=1}^n \bm X_j^{\ushort{4(\bm c-\bm 1)}}$. On the other hand,
	\begin{align*}
	\frac{1}{ n^2} \sum_{j=1}^n \IE_{|\bm X}(\bm Y_j^{\ushort{2(\bm c-\bm 1)}} \Ii_{(c_n, \infty)}(\bm Y_j^{\ushort{\bm c-\bm 1}})) \Lambda_g(\bm k)  \Ii_{\Xi_{\bm X}^c} \leq \frac{\mu_{\bm Y}}{2} \Ii_{\Xi_{\bm X}^c}\leq \frac{\mu_{\bm Y}}{2} \Ii_{\Omega_{\bm X}^c} .
	\end{align*}
	Using now that $|\mathcal K_n| \leq \log(n)^2$ we get 
	$$ M_2 \leq C(g)\frac{\log(n)^2}{\mu_{\bm X}} \left( \frac{\E_f(\bm X_1^{\ushort{4(\bm c-\bm 1)}})}{ n} + \Var_f^n(\widehat\mu_{\bm X}) \right).$$
	These three bounds imply
	\begin{align*}
	\E_{f_{\bm Y}}^n\left(\|\widehat f_{\widehat{\bm k}}-f_{\widehat{\bm k}}\|_{\bm x^{\ushort{2\bm c-\bm 1}}}^2-\frac{1}{12}\mathrm{pen}(\widehat{\bm k})\right)_+ \leq C(g)\big(\frac{\mu_{\bm X}}{n} + \frac{\E_f(\bm X_1^{\ushort{4(\bm c-\bm 1)}}) \log(n)^2 }{\mu_{\bm X} n} \\
	 + \frac{\Var_f^n(\widehat\mu_{\bm X})\log(n)^2}{\mu_{\bm X}}\big). 
	\end{align*}
\end{proof}

\begin{proof}[Proof of Lemma \ref{lem:help:2}]
	Let us define $\Omega:= \{|\widehat\mu_{\bm Y}-\mu_{\bm Y}|\leq \mu_{\bm Y}/2\}$. Then on $\Omega$ we have $2\widehat{\mu}_{\bm Y} \geq \mu_{\bm Y} $, respectively 
	\begin{align*} \E_{f_{\bm Y}}^n((\mathrm{pen}(\bm k)-2\widehat{\mathrm{pen}}(\bm k))_+) &= \chi \E_{f_{\bm Y}}^n\left(\widehat{\bm k}^{\ushort{\bm 1}} \frac{\Lambda_g(\widehat{\bm k})}{n}\left(\mu_{\bm Y}- 2 \widehat{\mu}_{\bm Y}\right)_+\right)  \\
	&\leq 2\chi \bm K_n^{\ushort{\bm 1}}\E_{f_{\bm Y}}^n(|\mu_{\bm Y}-  \widehat{\mu}_{\bm Y}|\mathds 1_{\Omega^c}) 
	\leq 2 \chi  \log(n)^2\frac{\Var_{f_{\bm Y}}^n(\widehat\mu_{\bm Y})}{\mu_{\bm Y}}.
	\end{align*}
	Now in analogy to the proof of \ref{pr:consis} we get
	$$ \Var_{f_{\bm Y}}^n(\widehat\mu_{\bm Y})= \frac{\E_{f_{\bm Y}}(\bm Y_1^{\ushort{4(\bm c-\bm 1)}} )}{n} + \E_g(\bm U_1^{\ushort{2(\bm c-\bm 1)}})^2 \Var_f^n(\widehat\mu_{\bm X}).$$
\end{proof}

\bibliographystyle{plain}
\bibliography{reference}

\end{document}